\documentclass{article}
\usepackage{}
\usepackage{amsfonts}
\usepackage{amsthm,amssymb,amsmath,amscd}
\usepackage{wasysym}
\usepackage[all]{xy}
\usepackage{mathrsfs}
\usepackage{multirow}
\usepackage{txfonts}
\usepackage{color}
\usepackage{url}

\usepackage{lscape}

\DeclareMathAlphabet{\mathcal}{OMS}{cmsy}{m}{n}
\DeclareSymbolFont{largesymbols}{OMX}{cmex}{m}{n}

\oddsidemargin=-18pt \voffset=-1.5cm
\addtolength{\textheight}{4cm}\addtolength{\textwidth}{4.5cm}
\objectmargin{1pt}

\newtheorem{Def}{Definition}[section]
\newtheorem{Prop}[Def]{Proposition}
\newtheorem{Theo}[Def]{Theorem}
\newtheorem{Lem}[Def]{Lemma}
\newtheorem{Koro}[Def]{Corollary}



\newcommand{\defCategory}[2]{
  \newcommand{#1}{#2\defvariable}
  }

\newcommand{\defvariable}[2][]{
\if\relax\detokenize{#1}\relax  
   \if\relax\detokenize{#2}\relax
    \else
    \left({#2}\right)
    \fi
\else
  ^{{#1}}\left({#2}\right)
\fi
 }

\def\cpx#1{{#1}^{\bullet}}
\def\subcpx#1#2{\sigma_{#1}#2}
 \def\con{{\rm con}}

 \defCategory{\C}{\mathscr{C}}
 \defCategory{\K}{\mathscr{K}}
 \defCategory{\D}{\mathscr{D}}

 \def\Kb#1{\K[{\rm b}]{#1}}
 \def\Kf#1{\K[-]{#1}}
 \def\Kz#1{\K[+]{#1}}

 \def\Db#1{\D[{\rm b}]{#1}}


\def\opp{^{\rm op}}

 \DeclareMathOperator{\add}{add}

 \DeclareMathOperator{\Ext}{Ext}
 \DeclareMathOperator{\findim}{fin.\!dim}
 \DeclareMathOperator{\gldim}{gl.\!dim}
 
 \DeclareMathOperator{\Hom}{Hom}
 
 \DeclareMathOperator{\HomP}{Hom^{\bullet}}
 \DeclareMathOperator{\Img}{Im}

 \DeclareMathOperator{\projdim}{proj.dim}

 \DeclareMathOperator{\stHom}{\underline{Hom}}
 \DeclareMathOperator{\otimesL}{\stackrel{\mathbf{L}}{\otimes}}
 
 \DeclareMathOperator{\thick}{thick}

\def\Hom{{\rm Hom}}
\def\Ext{{\rm Ext}}

\def\HomP{\Hom^{\bullet}}
\def\stHom{\underline{\Hom}}
\def\otimesL{\stackrel{\bf L}{\otimes}}

\def\Modcat#1{#1\mbox{-}{\rm Mod}}
\def\modcat#1{#1\mbox{-}{\rm mod}}
\def\stModcat#1{#1\mbox{-}\underline{{\rm Mod}}}
\def\stmodcat#1{#1\mbox{-}\underline{{\rm mod}}}
\def\pModcat#1{#1\mbox{-}{\rm Proj}}
\def\pmodcat#1{#1\mbox{-}{\rm proj}}
\def\GProj#1{#1\mbox{-}{\cal GP}}

\def\Gproj#1{#1\mbox{-}{\it f\cal GP}}
\def\stGProj#1{#1\mbox{-}{\underline{{\cal GP}}}}
\def\stGproj#1{#1\mbox{-}\underline{{\it f\cal GP}}}

\newcommand{\perpg}[1]{^{\perp_{>{#1}}}}

\newcommand{\lra}{\longrightarrow}
\newcommand{\lla}{\longleftarrow}
\newcommand{\ra}{\rightarrow}

\newcommand{\lraf}[1]{\stackrel{#1}{\lra}}
\newcommand{\llaf}[1]{\stackrel{#1}{\lla}}
\newcommand{\raf}[1]{\stackrel{#1}{\ra}}

\newcommand{\proja}{\mathcal{P}_{\!\!\!\scriptscriptstyle{\mathcal{A}}}}
\newcommand{\projb}{\mathcal{P}_{\!\!\scriptscriptstyle{\mathcal{B}}}}
\newcommand{\projc}{\mathcal{P}_{\!\!\scriptscriptstyle{\mathcal{C}}}}

\newcommand{\sta}{\underline{\mathcal{A}}}
\newcommand{\stb}{\underline{\mathcal{B}}}


\title{Stable functors of derived equivalences and Gorenstein projective modules}
\author{\sc WEI HU and SHENGYONG PAN}
\date{}

\begin{document}
\maketitle

\begin{abstract}

From certain triangle functors, called non-negative functors,  between the bounded derived categories of abelian categories with enough projective objects, we introduce their stable functors which are certain additive functors between the stable categories of the abelian categories.  The construction generalizes a previous work by Hu and Xi. We show that the stable functors of non-negative functors have nice exactness property and are compatible with composition of functors.   This allows us to compare conveniently  the homological properties of objects linked by the stable functors. Particularly, we prove that   the stable functor of a derived equivalence between two arbitrary rings provides an explicit triangle equivalence between the stable categories of Gorenstein projective modules.  This generalizes a result of Y. Kato.  Our results can also be applied to  provide shorter proofs of some known results on homological conjectures.
\end{abstract}


\section{Introduction}
Derived equivalences were introduced by Grothendieck and Verdier in 1960s, and play an important role nowadays  in many branches of mathematics and physics, especially in representation theory and in algebraic geometry. A derived equivalence is a triangle equivalence between the derived categories of complexes over certain abelian categories such as the module category of a ring or the category of coherent sheaves over some variety.  For derived equivalent abelian categories, it is very hard to directly compare the objects in the given abelian categories, since a derived equivalence typically takes objects in one abelian category  to complexes over the other.  

\smallskip
For an arbitrary derived equivalence $F$ between two  Artin algebras,  a functor $\bar{F}$ between the stable module categories were introduced in \cite{Hu2010}, called the stable functor of $F$.  This functor allows us to compare the modules over one algebra with the modules over the other.  Another nice property of this functor is that $\bar{F}$ is a stable equivalence of Morita type in case that $F$ is an almost $\nu$-stable standard derived equivalence, This generalizes a classic result \cite{Rickard1991} of Rickard which says that a derived equivalence between two selfinjective algebras always induces a stable equivalence of Morita type. However, in \cite{Hu2010}, many basic questions on the stable functor remain. For instance, we even don't know whether the stable functor is uniquely determined by the given derived equivalence, and whether the definition of the stable functor is compatible with composition of derived equivalences. 

\smallskip
In this paper, we shall look for a more general and systematical definition of stable functors, and generalize the notion of stable functors in two directions. One direction is that,  instead of module categories of Artin algebras, we consider arbitrary abelian categories with enough projective objects.  The other direction is that, instead of derived equivalences, we consider certain triangle functors, called non-negative functors,  between the derived categories. Note that this condition is not restrictive: all derived equivalences between rings are non-negative up to shifts.  We shall prove, in this general framework, that the stable functor is uniquely determined by the given non-negative functor (Theorem \ref{theorem-unique-F1F2}) and is compatible with the composition of non-negative functors (Theorem \ref{theorem-stFunctor-composite}).

\smallskip 
Our theory of stable functors can be applied to study stable categories of Gorenstein projective modules of derived equivalent rings, namely, the stable functor of a derived equivalence between two arbitrary rings provides an explicit triangle equivalence between their stable categories of Gorenstein projective modules (Corollary \ref{corollary-stgproj-equiv-ring-1}). 
 Gorenstein projective modules go back to a work of Auslander and Bridger \cite{Auslander1969}. Since then they have attracted more attention and have also nice applications in commutative algebra, algebraic geometry, singularity theory and relative homological algebra. In general, the size and homological complexity of the  stable category of Gorenstein projective modules measure how far the ring is from being Gorenstein. A nice feature of the  stable category of Gorenstein projective modules is that it is a triangulated category, and admits a full triangulated embedding into the singularity category in Orlov's sense, which is an equivalence if and only if the ring is Goresntein.

\smallskip
This paper is organized as follows. In Section \ref{section-preliminaries}  we recall some basic definitions and facts required in proofs.  Section \ref{section-invariant-space} is devoted to  studying for which complexes the localization functor from the homotopy category to the derived category preserves homomorphism spaces. The theory of stable functors will be given in  Section \ref{section-stablefunctor}, and will be applied to study stable category of Gorenstein projective modules in Section \ref{section-Gorenstein}.  An example is given in Section \ref{section-example} to illustrate how we can compute the Gorenstein projective modules over an algebra via the stable functor. Finally, we stress in Section \ref{section-remarks} that our results can be used to give shorter proofs of some known results on homological conjectures.

%
%

\section{Preliminaries}\label{section-preliminaries}
In this section, we recall some basic definitions and collect some basic facts for later use.

\smallskip
Throughout this paper, unless specified otherwise, all categories are additive categories, and all functors are additive functors.  The composite of two morphisms $f: X\ra Y$ and $g: Y\ra Z$ in a category ${\cal C}$ will be denoted by $fg$. If $f: X\ra Y$ is a map between two sets, then the image of an element  $x\in X$ will be denoted by $(x)f$. However, we will deal with functors in a different manner. The composite of two functors $F: {\cal C}\ra {\cal D}$ and $G: {\cal D}\ra {\cal E}$ will be denoted by $GF$. For each object $X$ in ${\cal C}$, we write $F(X)$ for the corresponding object in ${\cal D}$, and for each morphism $f: X\ra Y$ in ${\cal C}$ we write $F(f)$ for the corresponding morphism in ${\cal D}$ from $F(X)$ to $F(Y)$. For an object $M$ in an additive category ${\cal C}$, we use $\add(M)$ to denote the full subcategory of ${\cal C}$ consisting of direct summands of finite direct sums of copies of $M$.

Let $\mathcal{A}$ be an additive  category. A complex $\cpx{X}$ over ${\cal A}$ is a sequences $d_X^i$ between objects $X^i$ in ${\cal A}$: $\cdots\lra X^{i-1}\lraf{d_X^{i-1}}X^i\lraf{d_X^i}X^{i+1}\lraf{d_X^{i+1}}\cdots$ such that $d_X^id_X^{i+1}=0$ for all $i\in\mathbb{Z}$.  The category of complexes over ${\cal A}$, in which morphisms are chain maps, is denoted by $\C{\cal A}$, and the corresponding homotopy category is denoted by $\K{\cal A}$. When ${\cal A}$ is an abelian category, we write $\D{\cal A}$ for the derived category of ${\cal A}$.
We also write $\Kb{\cal A}$, $\Kf{\cal A}$ and $\Kz{\cal A}$ for the full subcategories of $\K{\cal A}$ consisting of complexes isomorphic to bounded complexes, complexes bounded above, and complexes bounded below, respectively.  Similarly, for $*\in\{b, -, +\}$, we have $\D[*]{\cal A}$. Moreover, for integers $m\leq n$ and for a collection of objects ${\cal  X}$, we write $\D[{[m, n]}]{\cal X}$ for the full subcategory of $\D{\cal A}$ consisting of complexes $\cpx{X}$ isomorphic in $\D{\mathcal{A}}$ to complexes  with terms in ${\cal X}$ of the form
$$0\lra X^m\lra\cdots\lra X^n\lra 0.$$
For each complex $\cpx{X}$ over $\mathcal{A}$, its $i$th cohomology is denoted by $H^i(\cpx{X})$.

\medskip
The homotopy category of an additive category, and the derived category of an abelian category are both triangulated categories.  For basic facts on triangulated categories, we refer to Neeman's book \cite{Neeman2001}.  However, the shift functor of a triangulated category will be denoted by $[1]$ in this paper.  In the homotopy category, or the derived category of an abelian category, the shift functor acts on a complex by moving the complex to the left by one degree, and changing the sign of the differentials.

\medskip
Suppose that $\mathcal{A}$ is an abelian category. There is a full embedding ${\cal A}\hookrightarrow \D{\cal A}$ by viewing an object in ${\cal A}$ as a complex in $\D{\cal A}$ concentrated in degree zero.  Let $\mathscr{X}$ be a collection of objects in $\D{\cal A}$ and let $n$ be an integer.  We define a full subcategory of $\D{\mathcal{A}}$:
$${}^{\perp_{> n}}\mathscr{X}:=\{\cpx{Z}\in\D{\mathcal{A}}\, |\, \Hom_{\D{\cal A}}(\cpx{Z}, \cpx{X}[i])=0\mbox{ for all } i>n\mbox{ and for all }\cpx{X}\in\mathscr{X}\},$$
For simplicity, we write ${}^{\perp}\!\mathscr{X}$ for ${}^{\perp_{>0}}\mathscr{X}$.

\medskip
Suppose that $\mathcal{A}$ is an abelian category with enough projective objects.  Let $\proja$ be the full subcategory of $\mathcal{A}$ consisting of all projective objects. The stable category of $\mathcal{A}$, denoted by $\underline{\mathcal{A}}$,   is defined to be the additive quotient $\mathcal{A}/\proja$, where the objects are the same as those in $\mathcal{A}$ and the morphism space $\Hom_{\underline{\mathcal{A}}}(X, Y)$ is the quotient space of $\Hom_{\mathcal{A}}(X, Y)$ modulo all morphisms factorizing through projective objects.
Two objects $X$ and $Y$ are isomorphic in $\underline{\mathcal{A}}$ if and only if there are projective objects $P$ and $Q$ such that $X\oplus Q\simeq Y\oplus P$ in $\mathcal{A}$. This seems not so obvious.
Indeed, first of all, it is easy to check that the injection $X\ra X\oplus Q$ is an isomorphism in $\underline{\mathcal{A}}$. So, if $X\oplus Q\simeq Y\oplus P$ in ${\mathcal{A}}$ with $P, Q$ projective, then $X$ and $Y$ are isomorphic in $\underline{\mathcal{A}}$. Conversely, suppose that $f: X\ra Y$ is a morphism in $\mathcal{A}$ such that its image $\underline{f}: X\ra Y$ in $\Hom_{\underline{\mathcal{A}}}(X, Y)$ is an isomorphism.  Then there is a morphism $g: Y\ra X$ such that $1_X-fg$ factorizes through some projective object $P$. Namely, there exist morphisms $\alpha: X\ra P$ and $\beta: P\ra X$ such that $1_X=fg+\alpha\beta$. Then we can form a split exact sequence
$$0\lra X\lraf{[f, \alpha]}Y\oplus P\lraf{\left[\begin{smallmatrix} u\\v \end{smallmatrix}\right]}Q\lra 0.$$
It follows that $fu=-\alpha v$ factorizes through the projective object $P$. This implies that $\underline{f}\underline{u}=0$. However, the morphism $\underline{f}$ is an isomorphism. Hence $\underline{u}=0$, and therefore $u$ factorizes through a projective object $P'$, say, $u=ab$ for some morphisms $a: Y\ra P'$ and $b: P'\ra Q$. Thus $\left[\begin{smallmatrix} u\\v \end{smallmatrix}\right]$ factorizes through the morphism $P'\oplus P\lraf{\left[\begin{smallmatrix} b\\v \end{smallmatrix}\right]}Q$.  The above split exact sequence indicates that $1_Q$ factorizes through $\left[\begin{smallmatrix} u\\v \end{smallmatrix}\right]$, and consequently factorizes through $\left[\begin{smallmatrix} b\\v \end{smallmatrix}\right]$. Hence $Q$ is isomorphic to a direct summand of $P'\oplus P$ and has to be projective.  This establishes that $X\oplus Q\simeq Y\oplus P$ with $P, Q$ projective.

 \medskip
 Let $A$ be an arbitrary ring with identity. The category $\Modcat{A}$ of unitary left  $A$-modules is an abelian category with enough projective objects.  We use $\modcat{A}$ to denote the full subcategory of $\Modcat{A}$ consisting of  finitely presented $A$-modules, that is, $A$-modules $X$ admitting a projective presentation $P_1\ra P_0\lra X\ra 0$ with $P_i$ finitely generated projective for $i=0, 1$.  The category $\modcat{A}$ is abelian when $A$ is left coherent. The full subcategory of $\Modcat{A}$ consisting of all projective modules is denoted by $\pModcat{A}$, and the category of finitely generated $A$-modules is written as $\pmodcat{A}$. Note that $\pmodcat{A}$ are precisely those projective modules in $\modcat{A}$. The stable category of $\Modcat{A}$ is denoted by $\stModcat{A}$, in which morphism space is denoted by $\stHom_A(X, Y)$ for each pair of $A$-modules $X$ and $Y$.  For a full subcategory $\mathscr{X}$ of $\Modcat{A}$, we denote by $\underline{\mathscr{X}}$ the full subcategory of $\stModcat{A}$ consisting of all modules in $\mathscr{X}$. However, the full subcategory of $\stModcat{A}$ consisting of finitely presented modules is denoted by $\stmodcat{A}$

\medskip
Two rings $A$ and $B$ are said to be {\em derived equivalent} if the following equivalent conditions are satisfied.

\smallskip
(1).  $\D{\Modcat{A}}$ and $\D{\Modcat{B}}$ are equivalent as triangulated categories.

(2). $\Db{\Modcat{A}}$ and $\Db{\Modcat{B}}$ are equivalent as triangulated categories.

(3). $\Kb{\pModcat{A}}$ and $\Kb{\pModcat{B}}$ are equivalent as triangulated categories.

(4). $\Kb{\pmodcat{A}}$ and $\Kb{\pmodcat{B}}$ are equivalent as triangulated categories.

(5). There is a complex $\cpx{T}$ in $\Kb{\pmodcat{A}}$
satisfying the conditions:

  \quad \quad (a). $\Hom_{\Kb{\pmodcat{A}}}(\cpx{T},\cpx{T}[n])= 0$ for all $n\neq 0$,

   \quad \quad (b). $\add(\cpx{T})$ generates $\Kb{\pmodcat{A}}$ as a triangulated category,

\quad\quad  such that the endomorphism algebra of $\cpx{T}$ in $\Kb{\pmodcat{A}}$ is isomorphic to $B$.

\medskip
{\parindent-0pt For the } proof that the above conditions are indeed equivalent, we refer to \cite{Rickard1989a,Keller1994}. If the algebras $A$ and $B$ are left coherent, then the above equivalent conditions are further equivalent to the following condition.

\medskip
(6). $\Db{\modcat{A}}$ and $\Db{\modcat{B}}$ are equivalent as triangulated categories.

\medskip
{\parindent=0pt   A} complex $\cpx{T}$ satisfying the conditions (a) and (b) above is called a {\em tilting complex}. A triangle equivalence functor $F: \Db{\Modcat{A}}\ra \Db{\Modcat{B}}$ is called a {\em derived equivalence}. In this case, the image $F(A)$ is isomorphic in $\Db{\Modcat{B}}$ to a tilting complex, and there is a tilting complex $\cpx{T}$ over $A$ such that $F(\cpx{T})$ is isomorphic to $B$ in $\Db{\Modcat{B}}$.  The complex $\cpx{T}$ is called an {\em associated tilting complex } of $F$.  The following is an easy lemma for the associated tilting complexes.  For the convenience of the reader, we provide a proof.

\begin{Lem}
Let $A$ and $B$ be two rings, and let $F: \Db{\Modcat{A}}\lra \Db{\Modcat{B}}$ be a derived equivalence. Then $F(A)$ is isomorphic in $\Db{\modcat{B}}$ to a complex $\cpx{\bar{T}}\in\Kb{\pmodcat{B}}$  of the form
$$0\lra \bar{T}^0\lra \bar{T}^1\lra\cdots\lra\bar{T}^n\lra 0$$
for some $n\geq 0$ if and only if $F^{-1}(B)$ is isomorphic  in $\Db{\Modcat{A}}$ to a complex $\cpx{T}\in\Kb{\pmodcat{A}}$ of the form
$$0\lra T^{-n}\lra\cdots\lra T^{-1}\lra T^0\lra 0.$$
\label{lemma-tiltCompForm}
\end{Lem}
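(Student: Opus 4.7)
The plan is to reduce by symmetry to the ``only if'' direction and then carry out two successive top-truncations, linked by biduality. The ``if'' direction follows by applying the same argument to the derived equivalence $[-n]\circ F^{-1}$, which undoes the degree shift and interchanges the roles of $A$ and $B$.

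So assume $F(A)\simeq\cpx{\bar T}$ with $\bar T^i=0$ for $i\notin[0,n]$, and fix any representative $\cpx{P}\in\Kb{\pmodcat{A}}$ of $F^{-1}(B)$. Put $\cpx{Q}:=\Hom_A^\bullet(\cpx{P},A)\in\Kb{\pmodcat{A\opp}}$. Since $A$ and $B$ are projective modules, the equivalence $F$ identifies
\[ H^k(\cpx{P})\,\cong\,\Hom_{\D{\Modcat{A}}}(A,\cpx{P}[k])\,\cong\,\Hom_{\D{\Modcat{B}}}(\cpx{\bar T},B[k]), \]
\[ H^k(\cpx{Q})\,\cong\,\Hom_{\D{\Modcat{A}}}(\cpx{P},A[k])\,\cong\,\Hom_{\D{\Modcat{B}}}(B,\cpx{\bar T}[k])\,\cong\, H^k(\cpx{\bar T}). \]
Because $\cpx{\bar T}$ is supported in degrees $[0,n]$, chain maps $\cpx{\bar T}\to B[k]$ require $-k\in[0,n]$, so $H^k(\cpx{P})=0$ for $k>0$; and $H^k(\cpx{\bar T})=0$ for $k>n$ gives $H^k(\cpx{Q})=0$ for $k>n$.

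Next, I invoke the standard top-truncation for $\Kb{\pmodcat{-}}$: whenever the top cohomology of a bounded complex of finitely generated projectives vanishes, the top differential is a split surjection onto a projective, so a contractible summand $P\xrightarrow{\mathrm{id}}P$ splits off and the top degree drops by one. Iterating using $H^k(\cpx{P})=0$ for $k>0$ replaces $\cpx{P}$, up to homotopy equivalence in $\Kb{\pmodcat{A}}$, by a complex with terms in $[s,0]$ for some $s$. Then $\cpx{Q}$ sits in degrees $[0,-s]$, and a second application of top-truncation, using $H^k(\cpx{Q})=0$ for $k>n$, yields a homotopy equivalence $\cpx{Q}\simeq\cpx{Q'}$ in $\Kb{\pmodcat{A\opp}}$ with $\cpx{Q'}$ in degrees $[0,\min(-s,n)]$. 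Finally, biduality---the canonical map $\cpx{X}\to \Hom_{A\opp}^\bullet(\Hom_A^\bullet(\cpx{X},A),A)$ is a termwise isomorphism for bounded complexes of finitely generated projectives, hence an isomorphism of complexes---gives $\cpx{P}\cong \Hom_{A\opp}^\bullet(\cpx{Q},A)\simeq \Hom_{A\opp}^\bullet(\cpx{Q'},A)$, a complex with terms in $[\max(s,-n),0]\subseteq[-n,0]$, as required.

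The main technical obstacle is that bottom-truncation for bounded complexes of projectives is \emph{not} directly available: the injectivity of the bottom differential, forced by vanishing bottom cohomology, need not produce a split monomorphism. The biduality trick above is what bypasses this, trading a desired bottom reduction of $\cpx{P}$ for an accessible top reduction of its $A$-dual $\cpx{Q}$; the two truncations combine cleanly because they act on opposite ends and biduality is exact.
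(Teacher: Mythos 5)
Your proof is correct, and its first half coincides with the paper's: both transport $H^i(F^{-1}(B))\simeq\Hom_{\Db{\Modcat{B}}}(\cpx{\bar{T}},B[i])=0$ for $i>0$ across the equivalence and then split off the top terms of a projective representative. Where you genuinely diverge is the bottom truncation. The paper shows instead that $\Hom_{\Db{\Modcat{A}}}(\cpx{T},P[i])\simeq\Hom_{\Db{\Modcat{B}}}(B,F(P)[i])=0$ for every finitely generated projective $P$ and all $i>n$ (using $F(P)\in\add(\cpx{\bar{T}})$), and then invokes the standard fact that this vanishing lets one split off all terms in degrees $<-n$: for the bottom term $T^r$ with $r<-n$, the identity gives a chain map $\cpx{T}\to T^r[-r]$, and its null-homotopy exhibits the bottom differential as a split monomorphism. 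You reach the same reduction by dualizing: $H^k(\Hom_A^\bullet(\cpx{P},A))\simeq\Hom_{\Db{\Modcat{B}}}(B,\cpx{\bar{T}}[k])=0$ for $k>n$ (this is the paper's computation specialized to $P=A$, which suffices since every finitely generated projective is a summand of some $A^m$), then perform a second top truncation on the $A\opp$-complex and return via biduality of finitely generated projectives. Both routes are valid; the paper's is marginally shorter and stays inside $\Modcat{A}$, while yours makes the splitting-off mechanism completely explicit and only needs Hom-vanishing against $A$ itself. Your reduction of the converse to the proven direction via the equivalence $[-n]\circ F^{-1}$ is also correct, and somewhat tidier than the paper's ``the proof of the sufficiency is similar.''
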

\begin{proof}
We prove the necessity, the proof of the sufficiency is similar. Suppose that $F(A)$ is isomorphic to a complex $\cpx{\bar{T}}$ in $\Kb{\pmodcat{B}}$ of the form
$$0\lra \bar{T}^0\lra \bar{T}^1\lra\cdots\lra\bar{T}^n\lra 0, $$
and $\cpx{T}$ is a complex in $\Kb{\pmodcat{A}}$ such that $F(\cpx{T})\simeq B$.  Then
$$\Hom_{\Db{\Modcat{A}}}(A, \cpx{T}[i])\simeq\Hom_{\Db{\Modcat{B}}}(\cpx{\bar{T}}, B[i])=0$$
for all $i>0$. Hence $\cpx{T}$ has zero homology in all positive degrees. Since all the terms of $\cpx{T}$ are projective, the complex $\cpx{T}$ is split in all positive degrees, and is isomorphic in $\Kb{\pmodcat{A}}$ to a complex with zero terms in all positive degrees. Thus, we can assume that $T^i=0$ for all $i>0$.  To prove that $\cpx{T}$ is isomorphic to a complex in $\Kb{\pmodcat{A}}$ with zero terms in all degrees $<-n$, it suffices to show that $\Hom_{\Db{\Modcat{A}}}(\cpx{T}, P[i])=0$ for all $i>n$ and for all finitely generated projective $A$-module $P$. Actually,  since $F(P)$ is in $\add(\cpx{\bar{T}})$, we can deduce that
$$\Hom_{\Db{\Modcat{A}}}(\cpx{T}, P[i])\simeq \Hom_{\Db{\Modcat{B}}}(B, F(P)[i])=0$$
for all $i>n$.
\end{proof}

 \section{Homomorphism spaces invariant from $\K{{\cal A}}$ to $\D{{\cal A}}$}\label{section-invariant-space}

 Let  ${\cal A}$ be an abelian category, let $q: \K{{\cal A}}\lra\D{{\cal A}}$ be the localization functor.  The morphisms in the derived category are ``complicated", while the morphisms in the homotopy category are relatively ``simple": they can be presented by chain maps. It is very natural to ask the following question:

 \medskip
 {\em For which complexes $\cpx{X}$ and $\cpx{Y}$, the induced map
 $$q_{(\cpx{X},\cpx{Y})}: \Hom_{\K{{\cal A}}}(\cpx{X},\cpx{Y})\lra \Hom_{\D{{\cal A}}}(\cpx{X},\cpx{Y})$$

is an isomorphism? }

\medskip
{\parindent=0pt It} is known that this is true in case that $\cpx{X}$ is an above-bounded complex of projective objects, or $\cpx{Y}$ is a below-bounded complex of injective objects.   In this section, we shall prove the following very useful proposition, which allows us to get morphisms between objects from morphisms between complexes in the derived category. It seems that this has not appeared elsewhere in the literature.

 \begin{Prop}
 Let ${\cal A}$ be an abelian category, and let $\cpx{X}$ and $\cpx{Y}$  be above-bounded and below-bounded complexes of objects in  ${\cal A}$, respectively.   Suppose that $X^i\in{}^{\perp}Y^j$ for all integers $j<i$. Then
the induced map
$$q_{(\cpx{X},\cpx{Y}[n])}: \Hom_{\K{{\cal A}}}(\cpx{X}, \cpx{Y}[n])\lra \Hom_{\D{{\cal A}}}(\cpx{X}, \cpx{Y}[n])$$
 is an isomorphism for all $n\leq 0$, and is a monomorphism for $n=1$.
\label{prop-cpxXcpxY}
\end{Prop}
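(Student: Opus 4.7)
The plan is to argue by induction on the length of $\cpx{X}$, reducing via brutal truncation to the case where $\cpx{X}$ is a single object, and then to handle that base case by a second induction on $\cpx{Y}$ combined with a t-structure / degree-counting argument.

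First, I would set up the induction. Since $\cpx{X}$ is above-bounded, the brutal truncations $\sigma^{\geq k}\cpx{X}$ stabilize at $0$ for $k$ large, and each sits in a short exact sequence of complexes
$$0\lra \sigma^{\geq k+1}\cpx{X}\lra \sigma^{\geq k}\cpx{X}\lra X^k[-k]\lra 0$$
which splits degreewise and therefore induces a distinguished triangle in both $\K{\cal A}$ and $\D{\cal A}$. Applying $\Hom(-,\cpx{Y}[n])$ yields two long exact sequences connected by the localization map; a Five-Lemma argument reduces the claim for $\sigma^{\geq k}\cpx{X}$ to the claims for $\sigma^{\geq k+1}\cpx{X}$ and for the single-term complex $X^k[-k]$, the Ext-vanishing hypothesis being inherited tautologically by every brutal truncation of $\cpx{X}$. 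To pass from bounded $\cpx{X}$ to a general above-bounded $\cpx{X}$ I would check that both $\Hom_{\K{\cal A}}(\sigma^{<k}\cpx{X},\cpx{Y}[n+i])$ and $\Hom_{\D{\cal A}}(\sigma^{<k}\cpx{X},\cpx{Y}[n+i])$ vanish for $i=0,1$ once $k$ is sufficiently negative (the first by a chain-map count using the below-bound of $\cpx{Y}$, the second by a t-structure argument), so that all morphism information is already carried by a bounded truncation.

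For the base case, after a shift we may take $\cpx{X}=X$ concentrated in degree $0$, so the hypothesis becomes $\Ext^{k}_{\cal A}(X,Y^j)=0$ for all $k>0$ and $j<0$. I would split $\cpx{Y}$ via
$$0\lra \sigma^{\geq 0}\cpx{Y}\lra \cpx{Y}\lra \sigma^{<0}\cpx{Y}\lra 0.$$
On the lower piece $\sigma^{<0}\cpx{Y}$, which is bounded with every term $X$-acyclic, a parallel induction on length (using the same truncation triangle applied now in the target) together with the single-term base case shows that the localization map is an isomorphism on $\Hom(X,\sigma^{<0}\cpx{Y}[m])$ for every $m$; the single-term base case is just comparing $\Hom_{\K{\cal A}}(X,Y[m])$ (which equals $\Hom(X,Y)$ for $m=0$ and vanishes otherwise) with $\Ext^{m}_{\cal A}(X,Y)$, and these agree thanks to $\Ext^{>0}_{\cal A}(X,Y)=0$. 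On the upper piece $\sigma^{\geq 0}\cpx{Y}$, a direct chain-map count forces $\Hom_{\K{\cal A}}(X,\sigma^{\geq 0}\cpx{Y}[m])=0$ for $m<0$, and $\Hom_{\D{\cal A}}(X,\sigma^{\geq 0}\cpx{Y}[m])=0$ for $m<0$ follows because $\sigma^{\geq 0}\cpx{Y}$ has cohomology in non-negative degrees only while $X$ lives in degree $0$. Threading these vanishings through the long exact sequences attached to the splitting of $\cpx{Y}$ delivers the isomorphism for $n\leq 0$ and the monomorphism at $n=1$.

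The main obstacle will be the base case, because the Ext-vanishing hypothesis is asymmetric: it covers only the $Y^j$ with $j<0$, so on the upper piece $\sigma^{\geq 0}\cpx{Y}$ the only tool available is the t-structure, which yields vanishing of morphisms from $X$ only for shifts $m<0$. This is exactly what forces the precise ``iso for $n\leq 0$, mono for $n=1$'' dichotomy: at $n=1$ one loses the isomorphism because $\Hom_{\D{\cal A}}(X,\sigma^{\geq 0}\cpx{Y})$ need not vanish (it can contribute an $\Ext^{1}$ class invisible to $\Hom_{\K{\cal A}}$), but one still retains injectivity because the connecting map in the long exact sequence is controlled by the vanishing one degree earlier.
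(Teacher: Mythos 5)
Your overall skeleton is the same as the paper's: decompose $\cpx{X}$ and $\cpx{Y}$ by brutal truncations, use that the degreewise split exact sequences give triangles in both $\K{{\cal A}}$ and $\D{{\cal A}}$, run a Five-Lemma argument, and dispose of the far-away pieces by degree reasons. The genuine gap is in your base case, in how you treat the upper piece $\sigma^{\geq 0}\cpx{Y}$. Applying $\Hom(X,-)$ to the triangle $\sigma^{\geq 0}\cpx{Y}\lra\cpx{Y}\lra\sigma^{<0}\cpx{Y}\lra\sigma^{\geq 0}\cpx{Y}[1]$ and chasing the ladder, the four/five lemma at $n=0$ needs $q_{(X,\sigma^{\geq 0}\cpx{Y})}$ to be surjective (in fact bijective) and $q_{(X,\sigma^{\geq 0}\cpx{Y}[1])}$ to be injective, and the monomorphism claim at $n=1$ again needs injectivity of $q_{(X,\sigma^{\geq 0}\cpx{Y}[1])}$. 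The only input you establish about the upper piece is the vanishing of both Hom-groups at shifts $m<0$, and iso at the $\sigma^{<0}\cpx{Y}$-terms plus vanishing at $m<0$ does not formally yield the conclusion: the statement ``$q$ is an isomorphism at shift $0$ and a monomorphism at shift $1$ for $X$ against a complex concentrated in degrees $\geq 0$'' is exactly the proposition you are proving, in the special case where the orthogonality hypothesis is vacuous. So the argument is incomplete (indeed circular) at its crucial point, and the closing remark that injectivity at $n=1$ is ``controlled by the vanishing one degree earlier'' is not a proof. Note also that moving the cut upward (say to degree $2$) makes the upper piece harmless at all shifts $\leq 1$, but then the lower piece contains $Y^0,Y^1$, which are not assumed $X$-acyclic, so your ``isomorphism in every degree'' induction on the lower piece breaks and you are forced into graded bookkeeping anyway.

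The missing facts are provable, and supplying them essentially reproduces the paper's method. For $m=0$ one checks directly that $\Hom_{\K{{\cal A}}}(X,\sigma^{\geq 0}\cpx{Y})\simeq\Hom_{\cal A}(X,\Ker d^0_Y)\simeq\Hom_{\D{{\cal A}}}(X,\sigma^{\geq 0}\cpx{Y})$, compatibly with $q$. For the injectivity at $m=1$, either argue directly: replace $\sigma^{\geq 0}\cpx{Y}[1]$ by its good truncation, the two-term complex $Y^0\ra\Ker d^1_Y$ in degrees $-1,0$, and observe that a chain map $X\ra\Ker d^1_Y$ that becomes zero in $\D{{\cal A}}$ must factor through $Y^0\ra\Ker d^1_Y$, because morphisms and factorizations between stalk complexes in $\D{{\cal A}}$ are computed in $\cal A$; hence the homotopy class was already zero. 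Or, as the paper does, decompose $\sigma^{\geq 0}\cpx{Y}$ (after cutting off the part in degrees $\geq 2$, which contributes nothing at shifts $\leq 1$) into its stalk terms $Y^j[-j]$, note that for two stalk complexes in non-negative relative degree $q$ is an isomorphism at shifts $\leq 0$ and trivially injective at shift $1$ (the homotopy Hom vanishes there), and propagate this ``isomorphism below, monomorphism at the edge'' pattern through extensions by the same Five-Lemma closure you already use on the $\cpx{X}$-side; this is precisely the role of the subcategories $\mathscr{U}^q_M$ and Lemmas \ref{lemma-x1x2+m1m2} and \ref{lemma-XcpxY} in the paper.
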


This proposition generalizes \cite[Lemma 2.2]{Hu2010}, and its proof will be given after several lemmas.

\medskip
 Let $F: {\cal T}\lra {\cal S}$ be a triangle functor between two triangulated categories, and let $M\in {\cal T}$ be an object. We define $\mathscr{U}^F_M$ to be the full subcategory of ${\cal T}$ consisting of objects $X$ satisfying the following two conditions.

 \smallskip
 (1)  $F_{(X, M[i])}: \Hom_{\cal T}(X, M[i])\lra \Hom_{\cal S}(F(X), F(M)[i])$  is an isomorphism for all $i\leq 0$.

 (2) $F_{(X, M[1])}: \Hom_{\cal T}(X, M[1])\lra \Hom_{\cal S}(F(X), F(M)[1])$ is monic.

 \medskip
 {\parindent=0pt Let} ${\cal T}$ be a triangulated category, and let $\mathscr{X}$ and $\mathscr{Y}$ be full subcategories of ${\cal T}$. We define
 $$\mathscr{X}*\mathscr{Y}:=\{Z\in {\cal T}\,|\,\mbox{There is a triangle }X\ra Z\ra Y\ra X[1]\mbox{ with }X\in\mathscr{X}\mbox{ and }Y\in\mathscr{Y}\}$$
 It is well known that ``$*$" is associative, that is,  $(\mathscr{X}*\mathscr{Y})*\mathscr{Z}=\mathscr{X}*(\mathscr{Y}*\mathscr{Z})$ for any full subcategories $\mathscr{X}, \mathscr{Y}$ and $\mathscr{Z}$ of ${\cal T}$.  So, for full subcategories $\mathscr{X}_1, \cdots, \mathscr{X}_n$ of ${\cal T}$, we can simply write $\mathscr{X}_1*\cdots *\mathscr{X}_n$.

 \begin{Lem}
  Let $F: {\cal T}\lra {\cal S}$ be a triangle functor between triangulated categories ${\cal T}$ and ${\cal S}$. Then we have the following.

 $(1)$. Suppose that $M\in {\cal T}$, and $\mathscr{X}_i\subseteq \mathscr{U}^F_M$ for $i=1, \cdots, n$. Then $\mathscr{X}_1*\cdots *\mathscr{X}_n\subseteq \mathscr{U}^F_{M}$.

 $(2)$.  Suppose that $M_i\in {\cal T}$, and $X\in\mathscr{U}^F_{M_i}$ for $i=1, \cdots, n$. Then $X\in \mathscr{U}^F_M$ for all $M\in {\{M_1\}*\cdots *\{M_n\}}$.
    \label{lemma-x1x2+m1m2}
 \end{Lem}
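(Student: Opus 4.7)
The plan is straightforward: reduce each statement to the case $n=2$ by induction on $n$, and for $n=2$ apply the long exact sequences associated with the defining triangle together with the five lemma (or, at the boundary, the four lemma).

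For part (1) with $n=2$, pick $Z\in \mathscr{X}_1*\mathscr{X}_2$ and a triangle $X_1\ra Z\ra X_2\ra X_1[1]$ with $X_j\in\mathscr{X}_j\subseteq \mathscr{U}^F_M$. Applying $\Hom_{\cal T}(-,M[i])$ and $\Hom_{\cal S}(F(-),F(M)[i])$ produces a commutative ladder with exact rows
\[
\Hom(X_1,M[i-1])\ra\Hom(X_2,M[i])\ra\Hom(Z,M[i])\ra\Hom(X_1,M[i])\ra\Hom(X_2,M[i+1]).
\]
For $i\le -1$, all four outside vertical maps are isomorphisms (by the hypothesis $X_j\in\mathscr{U}^F_M$), so the five lemma gives that the middle map is an isomorphism. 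For $i=0$, the leftmost three outside maps are isomorphisms and the rightmost is a monomorphism; the variant of the five lemma (iso, iso, ?, iso, mono) still yields an isomorphism. For $i=1$, the leftmost outside map is an isomorphism and the other two relevant outside maps are monomorphisms; the four-lemma (epi, mono, ?, mono) gives that the middle map is a monomorphism. Hence $Z\in\mathscr{U}^F_M$.

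Part (2) is proved dually: for $n=2$, take $M\in\{M_1\}*\{M_2\}$ with triangle $M_1\ra M\ra M_2\ra M_1[1]$, and apply $\Hom_{\cal T}(X,-)$ and $\Hom_{\cal S}(F(X),F(-))$ to get a ladder whose rows are the corresponding long exact sequences in the second variable; the same five/four-lemma argument applies verbatim, using that $X\in\mathscr{U}^F_{M_1}\cap\mathscr{U}^F_{M_2}$. In both cases the induction step is immediate from the associativity of $*$, writing $\mathscr{X}_1*\cdots*\mathscr{X}_n=(\mathscr{X}_1*\cdots *\mathscr{X}_{n-1})*\mathscr{X}_n$ (resp.\ for the $M_j$) and applying the $n=2$ conclusion to the outer $*$.

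The only subtlety, and what I expect to be the main bookkeeping point, is the behaviour at the boundary degree $i=1$, where the hypothesis only provides monomorphism rather than isomorphism; this is handled by the classical four-lemma rather than the full five lemma, and one must check that the relevant outside terms appearing in the ladder lie in degrees $\le 1$ so that the hypothesis in the definition of $\mathscr{U}^F_M$ actually applies. Once this is verified, the arguments for (1) and (2) are completely parallel.
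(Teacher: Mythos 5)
Your proposal is correct and follows essentially the same route as the paper: reduce to $n=2$ via associativity of $*$, form the commutative ladder of long exact sequences obtained by applying $\Hom(-,M[i])$ (resp.\ $\Hom(X,-)$), and conclude by the five lemma for $i\le 0$ and the four-lemma variant at the boundary degree $i=1$. Your explicit treatment of the $i=1$ case and of part (2), which the paper leaves to the reader, is accurate and fills in exactly the details the paper omits.
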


 \begin{proof}
 (1). Clearly, we only need to prove the case that $n=2$. Let $X$ be an object in $\mathscr{X}_1*\mathscr{X}_2$. There is a triangle $X_1\ra X\ra X_2\ra X_1[1]$ in ${\cal T}$ with $X_i\in\mathscr{X}_i$ for $i=1, 2$.  For simplicity, we write ${\cal T}(-, -)$ for $\Hom_{\cal T}(-, -)$. Then, for each integer $i$, we can form a commutative diagram with exact rows.
 $$\xymatrix{
 {\cal T}(X_1, M[i-1]) \ar[r]\ar[d]_{F_{(X_1, M[i-1])}} &{\cal T}(X_2, M[i])\ar[r]\ar[d]_{F_{(X_2, M[i])}} &{\cal T}(X, M[i])\ar[r]\ar[d]_{F_{(X, M[i])}} &{\cal T}(X_1, M[i])\ar[r]\ar[d]_{F_{(X_1, M[i])}} &{\cal T}(X_2, M[i+1])\ar[d]_{F_{(X_2, M[i+1])}}\\
 {\cal S}\big(FX_1, FM[i-1]\big) \ar[r] &{\cal S}(FX_2, FM[i])\ar[r] &{\cal S}(FX, FM[i])\ar[r] &{\cal S}(FX_1, FM[i])\ar[r] &{\cal S}(FX_2, FM[i+1])\\
 }$$
 If $i\leq 0$, then, by assumption, the maps  $F_{(X_1, M[i-1])}, F_{(X_2, M[i])}, F_{(X_1, M[i])}$ are isomorphisms and  $F_{(X_2, M[n+1])}$ is monic. By Five Lemma,  the map $F_{(X, M[i])}$ is an isomorphism in this case. Our assumption also indicates that $F_{(X_2, M[1])}$ and $F_{(X_1, M[1])}$ are monic, and  $F_{(X_1, M)}$ is an isomorphism.  By Five Lemma again, the map $F_{(X, M[1])}$ is monic. Hence $X\in\mathscr{U}^F_M$.  The proof of (2) is similar to that of (1). We leave it to the reader.
  \end{proof}

Let $X$ and $Y$ be two objects in an abelian category ${\cal A}$, and let $q: \K{\cal A}\ra \D{\cal A}$ be the localization functor.  Then it is straightforward to check that $X[i]\in\mathscr{U}^q_{Y[j]}$ for all $i\geq j$. If $Y\in X^{\perp}$, then $X[i]\in \mathscr{U}^q_{Y[j]}$ for all integers $i$ and $j$, since $q_{(X, Y[m])}: \Hom_{\K{\cal A}}(X, Y[m])\ra\Hom_{\D{\cal A}}(X, Y[m])$ is an isomorphism for all integers $m$ in this case.  If $\cpx{Y}$ is a complex with $Y^i=0$ for all $i<n$, then $X[i]\in\mathscr{U}^q_{\cpx{Y}}$ for all $i\geq -n+2$.  In this case $\Hom_{\K{\cal A}}(X[i], \cpx{Y}[m])=\Hom_{\D{\cal A}}(X[i], \cpx{Y}[m])=0$ for all $m\leq 1$.  Keeping these basic facts in mind helps us to prove the following lemma.

\begin{Lem}
Let ${\cal A}$ be an abelian category,  $X$ be an object in ${\cal A}$, and let $\cpx{Y}$ be a below-bounded complex over ${\cal A}$. Suppose that $m\in\mathbb{Z}$ and that $Y^i\in X^{\perp}$ for all $i<m$. Then $X[i]\in\mathscr{U}^q_{\cpx{Y}}$ for all $i\geq -m$.
\label{lemma-XcpxY}
\end{Lem}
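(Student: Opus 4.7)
The plan is to realize $\cpx{Y}$ as an iterated extension of simple pieces and invoke Lemma~\ref{lemma-x1x2+m1m2}(2), which propagates membership in $\mathscr{U}^q_{\bullet}$ through the operation $*$ on the second argument. The pieces will be the single-term complexes $Y^j[-j]$ for $N\leq j\leq m+1$ (where $N$ is the smallest degree in which $\cpx{Y}$ is nonzero) together with the brutal truncation $\sigma_{\geq m+2}\cpx{Y}$.

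First I would split $\cpx{Y}$ by the short exact sequence $0\to\sigma_{\geq m}\cpx{Y}\to\cpx{Y}\to\sigma_{<m}\cpx{Y}\to 0$, which yields a distinguished triangle in $\D{{\cal A}}$. Then I iterate: from $\sigma_{<m}\cpx{Y}$, peel off $Y^{m-1}[-m+1],\ldots,Y^N[-N]$ one at a time via analogous short exact sequences; and from $\sigma_{\geq m}\cpx{Y}$, peel off $Y^m[-m]$ and $Y^{m+1}[-m-1]$ via the two short exact sequences $0\to\sigma_{\geq m+1}\cpx{Y}\to\sigma_{\geq m}\cpx{Y}\to Y^m[-m]\to 0$ and $0\to\sigma_{\geq m+2}\cpx{Y}\to\sigma_{\geq m+1}\cpx{Y}\to Y^{m+1}[-m-1]\to 0$. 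By associativity of $*$, the resulting triangles exhibit $\cpx{Y}$ as lying in the iterated extension of $\sigma_{\geq m+2}\cpx{Y}$ and the $Y^j[-j]$ for $N\leq j\leq m+1$.

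For each of these pieces, the required membership $X[i]\in\mathscr{U}^q_{\bullet}$ at $i\geq -m$ is immediate from one of the three observations stated just before the lemma: for $Y^j[-j]$ with $j<m$, the second observation applies because $Y^j\in X^{\perp}$, giving membership for \emph{all} $i$; for $Y^j[-j]$ with $j\in\{m,m+1\}$, the first observation gives membership for $i\geq -j$, which covers $i\geq -m$; and for $\sigma_{\geq m+2}\cpx{Y}$, the third observation with $n=m+2$ gives membership for $i\geq -(m+2)+2=-m$. A single application of Lemma~\ref{lemma-x1x2+m1m2}(2) then yields the conclusion.

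The main obstacle, and the reason for peeling off the extra layers at the bottom of $\sigma_{\geq m}\cpx{Y}$, is the mismatch between the bound $i\geq -n+2$ in the third observation and the sharper bound $i\geq -m$ claimed here. Naively applying the third observation to $\sigma_{\geq m}\cpx{Y}$ (i.e.\ with $n=m$) would yield only $i\geq -m+2$; the two missing degrees $i=-m,-m+1$ are recovered by separating out $Y^m[-m]$ and $Y^{m+1}[-m-1]$, where the first observation is sharp enough to give $i\geq -m$.
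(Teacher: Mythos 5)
Your proof is correct and follows essentially the same route as the paper: the same cut of $\cpx{Y}$ at degree $m+1$ into single-term pieces $Y^j[-j]$ plus the brutal truncation $\subcpx{>m+1}{\cpx{Y}}$, the same three preliminary observations for the pieces, and Lemma \ref{lemma-x1x2+m1m2}(2) to assemble them. The only cosmetic differences are that you verify $X[i]\in\mathscr{U}^q_{\cpx{Y}}$ uniformly for all $i\geq -m$ in a single application of Lemma \ref{lemma-x1x2+m1m2}(2), whereas the paper treats $i=-m$ first (applying the lemma twice) and then uses $\mathscr{U}^q_{\cpx{Y}}[1]\subseteq\mathscr{U}^q_{\cpx{Y}}$.
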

 \begin{proof}
 For $i\geq m$, we have $-m\geq -i$, and $X[-m]\in\mathscr{U}^q_{Y^i[-i]}$.   For each $i<m$, since $Y^i\in X^{\perp}$, we have $X[-m]\in\mathscr{U}^q_{Y^i[-i]}$.   It follows that $X[-m]\in \mathscr{U}^q_{Y^i[-i]}$ for all $i\in\mathbb{Z}$.  Note that there is some integer $n<m$ such that $Y^i=0$ for all $i<n$, since $\cpx{Y}$ is bounded below.  Then $\subcpx{\leq m+1}{\cpx{Y}}$ is in $\{Y^{m+1}[-m-1]\}*\cdots *\{Y^n[-n]\}$.  By Lemma \ref{lemma-x1x2+m1m2} (2), we get that $X[-m]\in \mathscr{U}_{\subcpx{\leq m+1}{\cpx{Y}}}^q$.  Now it is clear that
$$\Hom_{\K{{\cal A}}}\big(X[-m], (\subcpx{>m+1}{\cpx{Y}})[i]\big)=0=\Hom_{\D{{\cal A}}}\big(X[-m], (\subcpx{>m+1}{\cpx{Y}})[i]\big)$$
for all $i\leq 1$. Hence $q_{(X[-m], (\subcpx{>m+1}{\cpx{Y}})[i])}$ is an isomorphism for all $i\leq 1$.  This establishes $X[-m]\in\mathscr{U}^q_{\subcpx{>m+1}{\cpx{Y}}}$. Since $\cpx{Y}$ is in $\{\subcpx{>m+1}{\cpx{Y}}\}*\{\subcpx{\leq m+1}{\cpx{Y}}\}$, we deduce that $X[-m]\in\mathscr{U}^q_{\cpx{Y}}$ by Lemma \ref{lemma-x1x2+m1m2} (2).  Finally, by definition,  we have $\mathscr{U}_{\cpx{Y}}^q[1]\subseteq \mathscr{U}^q_{\cpx{Y}}$. Hence $X[i]\in\mathscr{U}^q_{\cpx{Y}}$ for all $i\geq -m$.
\end{proof}

With the above lemmas, we can give a proof of Proposition \ref{prop-cpxXcpxY}.

\begin{proof}[Proof of Proposition \ref{prop-cpxXcpxY}]  What we need to prove is exactly $\cpx{X}\in\mathscr{U}^q_{\cpx{Y}}$. By Lemma \ref{lemma-XcpxY}, we have $X^i[-i]\in\mathscr{U}^q_{\cpx{Y}}$ for all $i\in\mathbb{Z}$.  Note that there is an integer $n$ such that $X^i=0$ for all $i>n$, since $\cpx{X}$ is above-bounded. Thus for each integer $m<n$, the complex $\subcpx{\geq m}{\cpx{X}}$ belongs to $\{X^n[-n]\}*\cdots*\{X^m[-m]\}$, and is consequently in $\mathscr{U}^q_{\cpx{Y}}$ by Lemma \ref{lemma-x1x2+m1m2} (1).  Taking $m$ to be  sufficiently small such that $Y^j=0$ for all $j<m+1$. Then for each integer $i\leq 1$, both $\Hom_{\K{{\cal A}}}(\subcpx{<m}{\cpx{X}}, \cpx{Y}[i])$ and $\Hom_{\D{{\cal A}}}(\subcpx{<m}{\cpx{X}}, \cpx{Y}[i])$ vanish.  Hence $q_{(\subcpx{<m}{\cpx{X}}, \cpx{Y}[i])}$ is an isomorphism for all $i\leq 1$, and consequently $\subcpx{<m}{\cpx{X}}\in\mathscr{U}^q_{\cpx{Y}}$.  Note that $\cpx{X}\in\{\subcpx{\geq m}{\cpx{X}}\}*\{\subcpx{<m}{\cpx{X}}\}$. It follows, by Lemma \ref{lemma-x1x2+m1m2} (1) again, that  $\cpx{X}\in\mathscr{U}^q_{\cpx{Y}}$.
\end{proof}

Proposition \ref{prop-cpxXcpxY} has the following useful corollary.

\begin{Koro}
Let $\mathcal{A}$ be an abelian category, and let $f: X\ra Y$ be a homomorphism in $\mathcal{A}$.  Suppose that $\cpx{Z}$ is a bounded complex over $\mathcal{A}$ such that $Z^i\in X^{\perp}$ for all $i<0$ and that $Z^i\in {}^{\perp}Y$ for all $i>0$. If $f$  factorizes through $\cpx{Z}$ in $\Db{\mathcal{A}}$, then $f$ factorizes through $Z^0$ in $\mathcal{A}$.
\label{corollary-factorthroughZ0}
\end{Koro}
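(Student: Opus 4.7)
The plan is to lift the hypothesized factorization in $\Db{\mathcal{A}}$ to an honest factorization by chain maps, and then to read off the degree-zero component. Write $f = hg$ in $\Db{\mathcal{A}}$ with $h\colon X \to \cpx{Z}$ and $g\colon\cpx{Z}\to Y$ morphisms in $\Db{\mathcal{A}}$, and view $X$ and $Y$ as stalk complexes concentrated in degree zero.

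The main step is a double application of Proposition \ref{prop-cpxXcpxY}. For $h\colon X\to\cpx{Z}$, the source is above-bounded and $\cpx{Z}$ is bounded and hence below-bounded; since $X$ is concentrated in degree $0$, the required orthogonality $X^i\in{}^{\perp}Z^j$ for $j<i$ reduces to $Z^j\in X^{\perp}$ for $j<0$, which is one half of our hypothesis. Symmetrically, for $g\colon\cpx{Z}\to Y$, the source $\cpx{Z}$ is above-bounded and the target $Y$ is below-bounded, and the condition $Z^i\in{}^{\perp}Y^j$ for $j<i$ reduces to $Z^i\in{}^{\perp}Y$ for $i>0$, which is the other half. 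Hence in the case $n=0$ of Proposition \ref{prop-cpxXcpxY} the localization maps $q_{(X,\cpx{Z})}$ and $q_{(\cpx{Z},Y)}$ are bijective, so $h$ and $g$ are represented by honest chain maps $h^{\bullet}\colon X\to\cpx{Z}$ and $g^{\bullet}\colon\cpx{Z}\to Y$.

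The composition $h^{\bullet}g^{\bullet}\colon X\to Y$ is then a chain map between stalk complexes whose image in $\D{\mathcal{A}}$ equals $hg=f$; since there are no nonzero homotopies between complexes concentrated in a single degree, we have $\Hom_{\K{\mathcal{A}}}(X,Y)=\Hom_{\mathcal{A}}(X,Y)$, and the full embedding $\mathcal{A}\hookrightarrow\D{\mathcal{A}}$ then forces $f = h^{\bullet}g^{\bullet}$ already as a morphism in $\mathcal{A}$. Reading off the degree-zero components yields $f = h^0 g^0$ with $h^0\colon X\to Z^0$ and $g^0\colon Z^0\to Y$, which exhibits the desired factorization of $f$ through $Z^0$ in $\mathcal{A}$.

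No serious obstacle is expected; the only nontrivial bookkeeping is keeping straight the matching between the orthogonality directions ($X^{\perp}$ versus ${}^{\perp}Y$) in the corollary's hypothesis and those required by Proposition \ref{prop-cpxXcpxY}, but once this matching is verified the two applications of the proposition deliver the chain-level lifts, and the degree-zero reduction is automatic.
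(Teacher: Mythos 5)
Your proof is correct and follows essentially the same route as the paper: apply Proposition \ref{prop-cpxXcpxY} (with $n=0$) to both morphisms $X\to\cpx{Z}$ and $\cpx{Z}\to Y$ to lift them to chain maps, then use the full embedding $\mathcal{A}\hookrightarrow\Db{\mathcal{A}}$ to read off $f=h^0g^0$ in degree zero. Your explicit verification of how the hypotheses $Z^i\in X^{\perp}$ ($i<0$) and $Z^i\in{}^{\perp}Y$ ($i>0$) match the orthogonality condition of the proposition is exactly what the paper leaves implicit.
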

\begin{proof}
Suppose that $f=gh$ for $g\in\Hom_{\Db{\mathcal{A}}}(X, \cpx{Z})$ and $h\in\Hom_{\Db{\mathcal{A}}}(\cpx{Z}, Y)$. By Proposition \ref{prop-cpxXcpxY}, both $g$ and $h$ can be presented by a chain map. Namely, $g=\cpx{g}$ and $h=\cpx{h}$ in $\Db{\mathcal{A}}$ for some chain maps $\cpx{g}: X\ra\cpx{Z}$ and $\cpx{h}:\cpx{Z}\ra Y$. Hence $f=\cpx{g}\cpx{h}=g^0h^0$ in $\Db{\mathcal{A}}$, and consequently $f=g^0h^0$ since $\mathcal{A}\hookrightarrow\Db{\mathcal{A}}$ is a fully faithful embedding.
\end{proof}


\section{The stable functor of a non-negative functor}\label{section-stablefunctor}

 The stable functor of a derived equivalence between Artin algebras was introduced in \cite{Hu2010}. In this section,  we greatly generalize this notion. Namely,
 we consider ``{\em non-negative functors}" between derived categories of abelian categories with enough projective objects, and develop a theory of their stable functors.

\medskip
Throughout this section, we assume that $\mathcal{A}$ and $\mathcal{B}$ are abelian categories with enough projective objects.  The full subcategories of projective objects are denoted by $\proja$ and $\projb$, respectively.  The corresponding stable categories are denoted by $\sta$ and $\stb$, respectively.

\subsection{Non-negative functors}

\begin{Def}
A triangle functor $F: \Db{\mathcal{A}}\lra \Db{\mathcal{B}}$  is called  {\bf uniformly bounded} if there are integers $r<s$ such that $F(X)\in\D[{[r, s]}]{\mathcal{B}}$ for all $X\in\mathcal{A}$, and is called {\bf non-negative} if $F$ satisfies the following conditions:

\smallskip
(1) $F(X)$ is isomorphic to a complex with zero homology in all negative degrees for all $X\in\mathcal{A}$.

(2) $F(P)$ is isomorphic to a complex in $\Kb{\projb}$ with zero terms in all negative degrees for all $P\in\proja$.
\label{def-uni-bounded-non-negative}
\end{Def}

{\parindent=0pt\it Remark.}
The condition (1) is equivalent to saying that $F$ sends objects in the part $\D[\geq 0]{\mathcal{A}}$ of the canonical $t$-structure  $(\D[\leq 0]{\mathcal{A}}, \D[\geq 0]{\mathcal{A}})$ of $\Db{\mathcal{A}}$ to objects in the part $\D[\geq 0]{\mathcal{B}}$ of the canonical $t$-structure  $(\D[\leq 0]{\mathcal{B}}, \D[\geq 0]{\mathcal{B}})$ of $\Db{\mathcal{B}}$. The condition (2) indicates that $F$ sends complexes in $\Kb{\proja}$ to complexes in $\Kb{\projb}$.

\medskip
For derived equivalences between module categories of rings, we have the following lemma. 
\begin{Lem}
Let $F: \Db{\Modcat{A}}\lra\Db{\Modcat{B}}$ be a derived equivalence between two rings $A$ and $B$. Then

$(1)$  $F$ is uniformly bounded.

$(2)$ $F$ is non-negative  if and only if the tilting complex associated to $F$ is isomorphic in $\Kb{\pmodcat{B}}$ to a complex with zero terms in all positive degrees. In particular, $F[i]$ is non-negative for sufficiently small $i$.
\label{lemma-derequiv-non-negative}
\end{Lem}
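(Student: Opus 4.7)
The proof splits according to the two parts.

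For (1), my plan is to extract a uniform cohomological bound from the inverse functor. Let $\cpx{T}:=F^{-1}(B)$ be the associated tilting complex, a bounded complex in $\Kb{\pmodcat{A}}$ supported in some range $[c,d]$. For any $X\in\Modcat{A}$,
\[
H^i(F(X)) \cong \Hom_{\Db{\Modcat{B}}}(B, F(X)[i]) \cong \Hom_{\Db{\Modcat{A}}}(\cpx{T}, X[i]) \cong \Hom_{\Kb{\Modcat{A}}}(\cpx{T}, X[i]),
\]
where the last identification uses that $\cpx{T}$ consists of projectives. Since $X[i]$ is concentrated in degree $-i$, a chain map $\cpx{T}\to X[i]$ can be nonzero only when $-i\in[c,d]$, whence $H^i(F(X))=0$ outside $[-d,-c]$. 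Cohomological truncation then gives a representative of $F(X)$ in $\D[{[-d,-c]}]{\Modcat{B}}$.

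For the forward direction of (2), I would first show that $H^i(\cpx{T})=0$ for $i>0$ by a symmetric calculation: non-negativity condition~(2) applied to $A$ supplies a representative $\cpx{P}\in\Kb{\pModcat{B}}$ of $F(A)$ with $P^j=0$ for $j<0$, and then
\[
H^i(\cpx{T}) \cong \Hom_{\Db{\Modcat{B}}}(F(A), B[i]) \cong \Hom_{\Kb{\pModcat{B}}}(\cpx{P}, B[i])
\]
vanishes for $i>0$ because its only possibly nonzero chain-map component lives in degree $-i<0$, where $\cpx{P}$ is zero. With this cohomological vanishing in hand, I would reshape $\cpx{T}$ by a standard top-degree splitting: if $s>0$ is the largest index with $T^s\neq 0$, then $H^s(\cpx{T})=0$ forces $d^{s-1}\colon T^{s-1}\twoheadrightarrow T^s$ to be surjective, and the projectivity of $T^s$ splits this surjection. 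This yields a decomposition in $\Kb{\pmodcat{A}}$ of $\cpx{T}$ as the direct sum of a complex whose top term vanishes (with its $(s-1)$st term being $\ker(d^{s-1})$, a summand of $T^{s-1}$ and hence still in $\pmodcat{A}$) and a contractible two-term summand $T^s\xrightarrow{=}T^s$ in degrees $s-1,s$. Iterating eliminates all positive-degree terms.

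For the backward direction of (2), assume $\cpx{T}$ has zero terms in all positive degrees, so by Lemma~\ref{lemma-tiltCompForm} we have $F(A)\cong \cpx{\bar{T}}\in\Kb{\pmodcat{B}}$ concentrated in degrees $[0,n]$. Condition~(1) of non-negativity then follows from the cohomology computation of part~(1): $\Hom_{\Kb{\Modcat{A}}}(\cpx{T}, X[i])$ vanishes for $i<0$ since it would require $T^{-i}\neq 0$ with $-i>0$. For condition~(2), the finitely generated case $P\in\pmodcat{A}$ is immediate from $P\in\add(A)$, which gives $F(P)\in\add(\cpx{\bar{T}})$ and hence a representative in $\Kb{\pmodcat{B}}$ supported in $[0,n]$. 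For arbitrary $P\in\pModcat{A}$, I would write $P$ as a direct summand of a free module $A^{(I)}$ and invoke the standard fact that $F$ extends to a triangle equivalence between the unbounded derived categories preserving set-indexed coproducts (equivalently, $F$ is given by tensoring with a two-sided tilting complex \`a la Rickard); then $F(A^{(I)})\cong\bigoplus_I\cpx{\bar{T}}$ lies in $\Kb{\pModcat{B}}$ supported in $[0,n]$, and $F(P)$ splits off as a summand in this homotopy category. The ``in particular'' clause is then immediate: since $\cpx{T}$ is bounded, the shifted complex $\cpx{T}[-i]$ (which serves as the associated tilting complex of $F[i]$) is supported in non-positive degrees once $i$ is sufficiently negative.

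The main obstacle is the backward direction of~(2) for non-finitely-generated projectives, where one must appeal to the coproduct-preserving extension of $F$ to the unbounded derived category. Everything else reduces to cohomology calculations exploiting that bounded projective complexes realize derived $\Hom$s as chain-map homotopy classes, together with the standard trick of splitting off a projective summand at the top of a complex whose top cohomology vanishes.
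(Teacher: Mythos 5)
Your proof is correct and follows essentially the same route as the paper: part (1) via $H^i(F(X))\simeq\Hom_{\Db{\Modcat{A}}}(\cpx{T}, X[i])$ with $\cpx{T}$ a bounded complex of projectives; the forward direction of (2) via $\Hom_{\Db{\Modcat{B}}}(F(A), B[i])=0$ for $i>0$ plus splitting off the top projective terms; and the backward direction via Lemma \ref{lemma-tiltCompForm} together with coproduct preservation to handle non-finitely-generated projectives. The only cosmetic difference is that you justify coproduct preservation by the extension of $F$ to unbounded derived categories (two-sided tilting complexes), whereas the paper simply observes that an equivalence preserves the coproducts that exist in $\Db{\Modcat{A}}$ and cites Rickard's restriction to $\Kb{\pModcat{A}}$; both are adequate.
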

\begin{proof}
Let $\cpx{T}$ be a tilting complex associated to $F$, that is, $F(\cpx{T})\simeq B$. Since $\cpx{T}$ is a bounded complex, there are integers $r<s$ such that $T^i=0$ for all $i<r$ and for all $i>s$. Let $X$ be an $A$-module. There is an isomorphism
$$H^i(F(X))=\Hom_{\Db{\Modcat{B}}}(B, F(X)[i])\simeq\Hom_{\Db{\Modcat{A}}}(\cpx{T}, X[i])$$
for each integer $i$. It follows that $H^i(F(X))=0$ for all $i<r$ and for all $i>s$, that is, $F(X)\in\D[{[r, s]}]{\Modcat{B}}$.  This proves that $F$ is uniformly bounded.

By \cite[Proposition 6.2]{Rickard1989a}, the derived equivalence $F$ induces a triangle equivalence functor between $\Kb{\pModcat{A}}$ and $\Kb{\pModcat{B}}$.  Suppose that the tilting complex $\cpx{T}$ associated to $F$ has $T^i=0$ for all $i>0$.
By Lemma \ref{lemma-tiltCompForm}, the image $F(A)$ is isomorphic to a complex $\cpx{\bar{T}}\in\K[{[0, n]}]{\pmodcat{B}}$ for some non-negative integer $n$.  As an equivalence, the functor $F$ preserves coproducts. Hence $F(\coprod A)\in\K[{[0, n]}]{\pModcat{B}}$, and consequently $F(\pModcat{A})\subseteq \K[{[0, n]}]{\pModcat{B}}$. Finally, for each $A$-module $X$, we have
$\Hom_{\Db{\Modcat{B}}}(B, F(X)[i])\simeq \Hom_{\Db{\Modcat{A}}}(\cpx{T}, X[i])=0$
for all $i<0$. This implies that $H^i(F(X))=0$ for all $i<0$ and thus $F(X)\in\D[\geq 0]{\Modcat{B}}$. Hence $F$ is a non-negative functor.

Conversely, suppose that $F$ is a non-negative derived equivalence.  Then $F(A)$ is isomorphic to a bounded complex $\cpx{Q}$ in $\K[\geq 0]{\pModcat{B}}$. Let $\cpx{T}$ be a tilting complex associated to $F$, that is, $F(\cpx{T})\simeq B$. Then
$$\Hom_{\Db{\Modcat{A}}}(A, \cpx{T}[i])\simeq \Hom_{\Db{\Modcat{B}}}(F(A), B[i])=0$$
for all positive $i$. Hence $\cpx{T}$ has zero homology in all positive degrees. This shows that $\cpx{T}$ is split in all positive degrees and thus isomorphic to a complex in $\Kb{\pmodcat{A}}$ with zero terms in all positive degrees.
\end{proof}

In general, both statements in Lemma \ref{lemma-derequiv-non-negative} may fail for a triangle functor $F: \Db{\mathcal{A}}\ra \Db{\mathcal{B}}$ between the derived categories of abelian categories $\mathcal{A}$ and $\mathcal{B}$, even if $F$ is a derived equivalence.  
 For instance,  let $\mathcal{A}$ and $\mathcal{B}$ be the categories of finitely generated graded modules over the polynomial algebra $k[x_0, x_1,\cdots, x_n]$ and the exterior algebra $\bigwedge_k(e_0, e_1,\cdots, e_n)$, respectively. Then there is a triangle equivalence $F: \Db{\mathcal{A}}\lra\Db{\mathcal{B}}$, known as Koszul duality, such that $F(X\langle i\rangle)\simeq F(X)\langle -i \rangle [i]$ for all $X\in\Db{\mathcal{A}}$ and for all $i\in\mathbb{Z}$, where $\langle i\rangle$ is the degree shifting functor of graded modules.  The functor $F$ is not uniformly bounded and $F[i]$ cannot be non-negative for any $i\in\mathbb{Z}$.  Also the two notions in Definition  \ref{def-uni-bounded-non-negative} are independent.  Clearly, a uniformly bounded triangle functor $F$ needs not to be non-negative. The following example gives a non-negative functor which is not uniformly bounded.

{\bf\parindent=0pt Example}. Let $k$ be a field, and let $Q$ be the infinite quiver
$$\xymatrix{
\bullet &\ar[l]_(1){0}^{\alpha_1}_(0){1}\bullet &\bullet\ar[l]_(0){2}^{\alpha_2} &\bullet\ar[l]_(0){3}^{\alpha_3}&\ar[l]\cdots
}$$
A representation of $Q$ over $k$ is a collection of vector spaces $V_i$ for each vertex $i$ together with linear maps $f_{\alpha_i}: V_i\ra V_{i-1}$ for all $i$. Let $\mathcal{A}$ be the category of all finite dimensional representations $(V_i, f_{\alpha_i+1})_{i\geq 0}$ of $Q$
satisfying $f_{\alpha_i}f_{\alpha_{i-1}}=0$ for all $i>0$.
Let $P_0$ be the representation $k\lla 0\lla 0\lla\cdots$, and, for each $i>0$,  let $P_i$ be the representation $0\lla\cdots\lla k\llaf{1}k\lla 0\lla\cdots$, where the two $k$'s correspond to the vertices $i-1, i$. Then $\mathcal{A}$ is an abelian category with enough projective objects and  $P_i, i\geq 0$ are precisely those indecomposable projective objects in $\mathcal{A}$. Consider the following complexes over $\mathcal{A}$:
$$
\cpx{T}_i: \quad 0\lra P_0\lra \cdots\lra P_{i-1}\lra P_i\lra 0, \quad i\geq 0.$$
It is easy to check that $\{\cpx{T}_i|i\geq 0\}$ is a tilting subcategory of $\Db{\mathcal{A}}$, that is, the following two conditions are satisfied.

a) $\Hom_{\Db{\mathcal{A}}}(\cpx{T}_i, \cpx{T}_j[l])=0$  for all $i, j\in\mathbb{N}$ and $ l\neq 0$;

b) $\thick\{\cpx{T}_i|i\geq 0\}=\Db{\mathcal{A}}$.

{\parindent=0pt The } tilting subcategory $\{\cpx{T}_i|i\geq 0\}$  is equivalent as a category to the quiver $Q_T$:
$$\xymatrix{
\bullet\ar[r]^(0){0}^(1){1}_{\beta_1} &\bullet\ar[r]^(1){2}_{\beta_2} &\bullet\ar[r]^(1){3}_{\beta_3} &\bullet\ar[r] & \cdots
}$$
For each $i\geq 0$, let $P^*_i$ be the representation
$0\lra\cdots\lra 0\lra k\lraf{1}k\lraf{1}k\lra\cdots$, where the first $k$ corresponds to the vertex $i$.  Let $\mathcal{B}$ be the category of finitely generated representations of $Q_T$ over $k$. Then $\mathcal{B}$ is an abelian category with enough projective objects, and the indecomposable projective objects are $P_i^*, i\in\mathbb{N}$.  Note that $\gldim\mathcal{B}=1$ and $\Db{\mathcal{B}}=\Kb{\projb}$.  By \cite[Theorem 3.6]{Keller2006},  there is a triangle equivalence $F: \Db{\mathcal{B}}\lra\Db{\mathcal{A}}$ sending $P_i^*$ to $\cpx{T}_i$ for all $i\in\mathbb{N}$. This functor is non-negative, but not uniformly bounded.

\begin{Lem}
Let $\mathcal{A}$ and $\mathcal{B}$ be abelian categories with enough projective objects, and let $F:\Db{\mathcal{A}}\lra\Db{\mathcal{B}}$ be a uniformly bounded, non-negative triangle functor.  Suppose that $n>0$ is such that $F(\mathcal{A})\subseteq \D[{[0, n]}]{\mathcal{B}}$. Then

$(1)$ If $F$ admits a right adjoint $G$, then $G$ is uniformly bounded and $G(\mathcal{B})\subseteq\D[{[-n, 0]}]{\mathcal{A}}$.

$(2)$ If $F$ admits a left adjoint $E$, then $E(\projb)\subseteq\K[{[-n, 0]}]{\proja}$.

$(3)$ If $G$ is both a left adjoint and a right adjoint of $F$, then $G[-n]$ is  uniformly bounded and non-negative.
\label{lemma-unibounded-non-negative}
\end{Lem}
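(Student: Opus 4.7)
The plan is to handle the three parts in order, with (3) following from (1) and (2) by shifting. For part (1), the idea is a direct calculation via the adjunction. For $P\in\proja$ and $Y\in\mathcal{B}$,
\[
\Hom_{\Db{\mathcal{A}}}(P,G(Y)[i])\;\cong\;\Hom_{\Db{\mathcal{B}}}(F(P),Y[i]).
\]
Combining the two parts of non-negativity with $F(\mathcal{A})\subseteq\D[{[0,n]}]{\mathcal{B}}$, one sees that $F(P)$ can be represented by a complex in $\K[{[0,n]}]{\projb}$ (projective summands in degrees $>n$ split off because their homology vanishes). A direct chain-map analysis then shows the right-hand side vanishes for $i<-n$ or $i>0$. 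Since $P$ is projective and $G(Y)$ is bounded, the hyperext spectral sequence collapses to $\Hom_{\Db{\mathcal{A}}}(P,G(Y)[i])\cong\Hom_{\mathcal{A}}(P,H^i(G(Y)))$, and enough projectives in $\mathcal{A}$ then forces $H^i(G(Y))=0$ for $i\notin[-n,0]$, so $G(Y)\in\D[{[-n,0]}]{\mathcal{A}}$ and $G$ is uniformly bounded.

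For part (2), the same adjunction gives, for $Q\in\projb$ and $X\in\mathcal{A}$,
\[
\Hom_{\Db{\mathcal{A}}}(E(Q),X[i])\;\cong\;\Hom_{\mathcal{B}}(Q,H^i(F(X)))\;=\;0\qquad\text{for }i<0\text{ or }i>n.
\]
To upgrade this $\Hom$-vanishing to $E(Q)\in\K[{[-n,0]}]{\proja}$ I would first locate $E(Q)$ inside the $t$-structure. The vanishing for $i<0$, combined with a Yoneda argument on the top cohomology, yields $E(Q)\in\D[\leq 0]{\mathcal{A}}$. For the lower bound, the hypotheses on $F$ extend via the Postnikov tower and the triangle-functor property to $F(\D[\leq 0]{\mathcal{A}})\subseteq\D[\leq n]{\mathcal{B}}$, and hence $F(\D[\leq -n-1]{\mathcal{A}})\subseteq\D[\leq -1]{\mathcal{B}}$; projectivity of $Q$ then forces $\Hom_{\Db{\mathcal{A}}}(E(Q),\cpx{Z})\cong\Hom_{\mathcal{B}}(Q,H^0(F(\cpx{Z})))=0$ for every $\cpx{Z}\in\D[\leq -n-1]{\mathcal{A}}$, which by the $t$-structure axiom gives $E(Q)\in\D[\geq -n]{\mathcal{A}}$. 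With $E(Q)\in\D[{[-n,0]}]{\mathcal{A}}$ established, I build a projective resolution iteratively: starting from $\cpx{Y}_0=E(Q)$, at step $k$ pick a projective cover $P_k\twoheadrightarrow H^0(\cpx{Y}_k)$, lift it to a morphism $P_k\to\cpx{Y}_k$ in $\Db{\mathcal{A}}$, and let $\cpx{Y}_{k+1}$ be the shifted cone so that $\cpx{Y}_{k+1}\to P_k\to\cpx{Y}_k$ extends to a triangle. A homology and $\Ext$ chase through the triangles shows that $\cpx{Y}_{k+1}\in\D[{[-n+k+1,0]}]{\mathcal{A}}$ retains the $\Ext$-vanishing outside $[0,n-k-1]$, so after $n$ iterations $\cpx{Y}_n$ is a single projective object concentrated in degree $0$. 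Assembling the tower yields $E(Q)\simeq\cpx{P}\in\K[{[-n,0]}]{\proja}$.

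Part (3) is then immediate: (1) gives $G(\mathcal{B})\subseteq\D[{[-n,0]}]{\mathcal{A}}$, hence $G[-n](\mathcal{B})\subseteq\D[{[0,n]}]{\mathcal{A}}$, which yields both uniform boundedness and condition~(1) of non-negativity; (2) applied to $G$ in its role as left adjoint gives $G(\projb)\subseteq\K[{[-n,0]}]{\proja}$, hence $G[-n](\projb)\subseteq\K[{[0,n]}]{\proja}$, which is condition~(2) of non-negativity. The main technical obstacle is the lower-bound step $E(Q)\in\D[\geq -n]{\mathcal{A}}$ in part (2): it requires a careful combination of the non-negativity of $F$, the derived extension $F(\D[\leq 0])\subseteq\D[\leq n]$, and the projectivity of $Q$. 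Once this is in hand, the iterative construction of the projective resolution and the verification that the inductive bounds propagate correctly through the triangles is essentially formal.
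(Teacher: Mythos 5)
Parts (1) and (3) are fine and amount to the paper's adjunction computation with details added; the problem is in part (2), at the lower-bound step. You deduce $E(Q)\in\D[\geq -n]{\mathcal{A}}$ from the vanishing of $\Hom_{\Db{\mathcal{A}}}(E(Q),\cpx{Z})$ for all $\cpx{Z}\in\D[\leq -n-1]{\mathcal{A}}$ ``by the $t$-structure axiom''. That axiom gives the \emph{opposite} orthogonality: $\D[\geq -n]{\mathcal{A}}$ is characterized as the class of objects $\cpx{Y}$ with $\Hom(\cpx{Z},\cpx{Y})=0$ for all $\cpx{Z}\in\D[\leq -n-1]{\mathcal{A}}$, i.e.\ the right orthogonal; what you have verified is membership in the left orthogonal, and no $t$-structure axiom identifies the two (morphisms from $\D[\geq a]{\mathcal{A}}$ to $\D[\leq b]{\mathcal{A}}$ with $b<a$ are just higher extension groups and are generally nonzero, so the two orthogonals are genuinely different classes in an arbitrary triangulated category with a $t$-structure). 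The statement you want is true here, but it needs an argument: for instance, represent $E(Q)$ by a bounded complex $\cpx{W}$, let $j$ be minimal with $H^j(\cpx{W})\neq 0$, and note that the quotient chain map $\cpx{W}\ra\subcpx{\leq j}{\cpx{W}}$ is nonzero on $H^j$, hence nonzero in $\Db{\mathcal{A}}$, while $\subcpx{\leq j}{\cpx{W}}\in\D[\leq -n-1]{\mathcal{A}}$ whenever $j\leq -n-1$. Since your whole iteration is anchored on this bound (the inductive claim $\cpx{Y}_{k+1}\in\D[{[-n+k+1,0]}]{\mathcal{A}}$ starts from $E(Q)\in\D[{[-n,0]}]{\mathcal{A}}$, and the last step needs $\cpx{Y}_n$ concentrated in degree $0$ before $\Ext^1$-vanishing can force projectivity), the proof of (2) is incomplete as written at exactly its crucial point; everything else in the proposal is plausible.

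A cleaner repair, closer to what the paper tacitly invokes, avoids both the detour through $F(\D[\leq 0]{\mathcal{A}})\subseteq\D[\leq n]{\mathcal{B}}$ and the iteration. From the adjunction you already have $\Hom_{\Db{\mathcal{A}}}(E(Q),X[i])=0$ for all $X\in\mathcal{A}$ and all $i<0$ or $i>n$; the $i<0$ part gives $E(Q)\in\D[\leq 0]{\mathcal{A}}$ as you argue. Choose a quasi-isomorphism $\cpx{P}\ra E(Q)$ with $\cpx{P}$ a bounded-above complex of projectives and $P^i=0$ for $i>0$. For each $m>n$, $\Hom_{\K{\mathcal{A}}}(\cpx{P},X[m])\cong\Hom_{\Db{\mathcal{A}}}(E(Q),X[m])=0$ for every $X$; chain maps $\cpx{P}\ra X[m]$ are maps $\Coker(d^{-m-1})\ra X$ and null-homotopic ones are those factoring through the induced map $\Coker(d^{-m-1})\ra P^{-m+1}$, so taking $X=\Coker(d^{-m-1})$ shows this induced map is a split monomorphism; hence $H^{-m}(\cpx{P})=0$ and $\Img(d^{-m})$ is a direct summand of $P^{-m+1}$ for all $m>n$. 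Consequently $E(Q)$ is isomorphic to $0\ra\Coker(d^{-n-1})\ra P^{-n+1}\ra\cdots\ra P^0\ra 0$, and its degree $-n$ term is projective because $\Img(d^{-n-1})\cong\Coker(d^{-n-2})$ splits off $P^{-n}$; this is precisely $E(Q)\in\K[{[-n,0]}]{\proja}$, with the lower cohomology bound obtained along the way rather than assumed. Two smaller points: in this generality projective covers need not exist, so your iteration should use an arbitrary epimorphism from a projective object; and when assembling the cones at the end one should realize the morphisms by chain maps (Proposition \ref{prop-cpxXcpxY} covers this).
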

\begin{proof}
(1) Let $X$ be an object in $\mathcal{B}$ and $P$ be a projective object in $\mathcal{A}$. Then $\Hom_{\Db{\mathcal{A}}}(P, G(X)[i])\simeq\Hom_{\Db{\mathcal{B}}}(F(P), X[i])$ vanishes for all $i\not\in [-n, 0]$, since our assumption indicates that $F(P)$ is isomorphic to a complex in $\K[{[0, n]}]{\projb}$. It follows that $G(X)\in\D[{[-n, 0]}]{\mathcal{A}}$ for all $X\in \mathcal{B}$.

(2) Let $Q\in\projb$ and let $X$ be an object in $\mathcal{A}$. Then $\Hom_{\Db{\mathcal{A}}}(E(Q), X[i])\simeq\Hom_{\Db{\mathcal{B}}}(P, F(X)[i])$ vanishes for all $i\not\in [0, n]$.  This implies that $E(Q)\in\K[{[-n, 0]}]{\proja}$.

(3) This follows from (1) and (2) immediately.
 \end{proof}

 For the rest of this section, we assume that
$$F: \Db{\mathcal{A}}\lra\Db{\mathcal{B}}$$
is a non-negative triangle functor. The following lemma describes the images of objects in $\mathcal{A}$ under $F$.

\begin{Lem}
 For each $X\in \mathcal{A}$, there is a triangle
 $$\cpx{U}_X\lraf{i_X}F(X)\lraf{\pi_X} M_X\lraf{\mu_X}\cpx{U}_X[1]$$
 in $\Db{\mathcal{B}}$ with $M_X\in\mathcal{B}$ and $\cpx{U}_X\in\D[{[1,n_X]}]{\projb}$ for some $n_X>0$.
 \label{lemma-triangle-for-X}
 \end{Lem}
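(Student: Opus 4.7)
The plan is to represent $F(X)$ (up to isomorphism in $\Db{\mathcal{B}}$) by a bounded-above complex $\cpx{P}$ of projective objects in $\mathcal{B}$, then split $\cpx{P}$ via the stupid truncation into its positive-degree part (which will become $\cpx{U}_X$) and its non-positive-degree part, which will turn out to have cohomology concentrated in degree $0$ and therefore be isomorphic in $\Db{\mathcal{B}}$ to an object $M_X\in\mathcal{B}$.

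Since $F$ is non-negative, $F(X)\in \D[\geq 0]{\mathcal{B}}$; as $F(X)$ is also bounded, smart truncation produces a representative $\cpx{Y}$ concentrated in degrees $[0,n]$ for some integer $n\geq 0$. Because $\mathcal{B}$ has enough projective objects, the standard construction of bounded-above projective resolutions yields a quasi-isomorphism $\cpx{P}\lra\cpx{Y}$ from a complex of projectives with $P^i=0$ for all $i>n$. The canonical short exact sequence of complexes
$$0\lra\subcpx{>0}{\cpx{P}}\lra\cpx{P}\lra\subcpx{\leq 0}{\cpx{P}}\lra 0$$
induces a distinguished triangle in $\Db{\mathcal{B}}$:
$$\subcpx{>0}{\cpx{P}}\lra\cpx{P}\lra\subcpx{\leq 0}{\cpx{P}}\lra\subcpx{>0}{\cpx{P}}[1].$$

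I then set $\cpx{U}_X:=\subcpx{>0}{\cpx{P}}$, which is a complex of projective objects concentrated in degrees $[1,n]$ and hence lies in $\D[{[1,n_X]}]{\projb}$ with $n_X:=\max(n,1)$. For the other term, I verify cohomologically that $\subcpx{\leq 0}{\cpx{P}}$ is isomorphic in $\Db{\mathcal{B}}$ to an object of $\mathcal{B}$: for $i<0$ one has $H^i(\subcpx{\leq 0}{\cpx{P}})=H^i(\cpx{P})=H^i(\cpx{Y})=0$ since $\cpx{Y}\in\D[\geq 0]{\mathcal{B}}$; for $i>0$ it vanishes trivially; and $H^0(\subcpx{\leq 0}{\cpx{P}})=\Coker(d^{-1}_P)$, which lies in $\mathcal{B}$. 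Thus $\subcpx{\leq 0}{\cpx{P}}\simeq M_X:=\Coker(d^{-1}_P)$ in $\Db{\mathcal{B}}$, and substituting $\cpx{P}\simeq F(X)$ and $\subcpx{\leq 0}{\cpx{P}}\simeq M_X$ in the triangle above gives the desired $\cpx{U}_X\lra F(X)\lra M_X\lra \cpx{U}_X[1]$. The only mildly delicate ingredient is invoking the existence of bounded-above projective resolutions (classical in an abelian category with enough projective objects); the remainder of the argument is a routine cohomology computation followed by substitution via the established isomorphisms.
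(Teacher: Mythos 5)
Your proof is correct and follows essentially the same route as the paper, whose (very terse) argument is precisely ``take a projective resolution of $F(X)$ and do good truncation at degree zero''; your brutal truncation of the projective resolution together with the identification of $\subcpx{\leq 0}{\cpx{P}}$ with $\Coker(d^{-1}_P)$ is just that truncation carried out explicitly. The only cosmetic remark is that the triangle is first formed in the unbounded (or bounded-above) derived category and then lands in $\Db{\mathcal{B}}$ because all three terms have bounded cohomology, a point worth a half-sentence but not a gap.
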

\begin{proof}
By definition, $F(X)$ has no homology in negative degrees. Take a projective resolution of $F(X)$ and then do good truncation at degree zero. The lemma follows.
\end{proof}

  \begin{Lem}
 Suppose that $\cpx{U}_i\lraf{\alpha_i}\cpx{X}_i\lraf{\beta_i}M_i\lraf{\gamma_i}\cpx{U}_i[1], i=1, 2$ are triangles in $\Db{\mathcal{B}}$ such that $M_1, M_2$ are objects in $\mathcal{B}$ and $\cpx{U}_1, \cpx{U}_2\in\D[{[1, n]}]{\projb}$.  Then, for each morphism $f: \cpx{X}_1\lra\cpx{X}_2$ in $\Db{\mathcal{B}}$, there is morphism $b: M_1\lra M_2$ in $\mathcal{B}$ and a morphism $a: \cpx{U}_1\lra\cpx{U}_2$  in $\Db{\mathcal{B}}$ such that the diagram
  $$\xymatrix@M=1.5mm{
 \cpx{U}_1\ar[r]^{\alpha_1}\ar[d]^{a} & \cpx{X}_1\ar[r]^{\beta_1}\ar[d]^{f} & M_1\ar[r]^{\gamma_1}\ar[d]_{b} &\cpx{U}_1[1]\ar[d]^{a[1]}\\
 \cpx{U}_2\ar[r]^{\alpha_2} & \cpx{X}_2\ar[r]^{\beta_2} & M_2\ar[r]^{\gamma_2} &\cpx{U}_2[1]\\
 }$$
 is commutative. Moreover, if $f$ is an isomorphism in $\Db{\mathcal{B}}$, then $\underline{b}$ is an isomorphism in $\stb$.
 \label{lemma-exist-ab}
 \end{Lem}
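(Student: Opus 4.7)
The plan is to use the triangulated axiom TR3 to produce the pair $(a,b)$, relying on the full embedding $\mathcal{B}\hookrightarrow\Db{\mathcal{B}}$ to ensure that $b$ is a genuine morphism in $\mathcal{B}$, and then to apply Corollary \ref{corollary-factorthroughZ0} to control the ``error terms'' that appear when we try to invert $b$ in $\stb$.

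For the first part, I first observe that $\alpha_1 f\beta_2:\cpx{U}_1\lra M_2$ vanishes. Indeed, since $\cpx{U}_1\in\D[{[1,n]}]{\projb}$ is concentrated in strictly positive degrees while $M_2\in\mathcal{B}$ sits in degree zero, there are no non-zero chain maps from a bounded complex of projectives in degrees $[1,n]$ to an object in degree $0$, and in fact $\Hom_{\Db{\mathcal{B}}}(\cpx{U}_1,M_2)=0$ by $t$-structure orthogonality. The long exact sequence obtained by applying $\Hom_{\Db{\mathcal{B}}}(-,M_2)$ to the first triangle then yields some $b\in\Hom_{\Db{\mathcal{B}}}(M_1,M_2)$ with $f\beta_2=\beta_1 b$; because $\mathcal{B}\hookrightarrow\Db{\mathcal{B}}$ is fully faithful, this $b$ is a morphism in $\mathcal{B}$. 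Applying TR3 to the commutative square $f\beta_2=\beta_1 b$ now delivers $a:\cpx{U}_1\lra\cpx{U}_2$ completing the morphism of triangles.

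For the isomorphism part, I repeat the same construction with $f^{-1}$ in place of $f$ to obtain $a':\cpx{U}_2\lra\cpx{U}_1$ and $b':M_2\lra M_1$ in $\mathcal{B}$. From $\beta_1(bb')=f\beta_2 b'=ff^{-1}\beta_1=\beta_1$ we get $\beta_1(bb'-1_{M_1})=0$, so exactness of the long exact sequence from the first triangle applied to $\Hom_{\Db{\mathcal{B}}}(-,M_1)$ yields $bb'-1_{M_1}=\gamma_1 y$ for some $y:\cpx{U}_1[1]\lra M_1$. Thus $bb'-1_{M_1}$ factors through $\cpx{U}_1[1]$, a bounded complex whose terms $U_1^{i+1}$ in degrees $[0,n-1]$ are projective, so the hypotheses of Corollary \ref{corollary-factorthroughZ0} hold (the condition for $i<0$ is vacuous, and for $i>0$ the projective terms lie in ${}^\perp M_1$). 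The corollary therefore gives an honest factorization of $bb'-1_{M_1}$ in $\mathcal{B}$ through $U_1^1\in\projb$, so $\underline{b}\underline{b'}=1_{\underline{M_1}}$ in $\stb$; a symmetric argument gives $\underline{b'}\underline{b}=1_{\underline{M_2}}$, and hence $\underline{b}$ is an isomorphism in $\stb$.

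The main obstacle is the delicate passage between morphisms in $\Db{\mathcal{B}}$ and morphisms/factorizations in $\mathcal{B}$: TR3 by itself only produces $b$ in $\Db{\mathcal{B}}$, and the uniqueness defect of TR3 a priori only tells us that $bb'-1_{M_1}$ factors through the \emph{complex} $\cpx{U}_1[1]$ rather than through a single projective object. The results of Section \ref{section-invariant-space}, and in particular Corollary \ref{corollary-factorthroughZ0}, are designed exactly to bridge this gap.
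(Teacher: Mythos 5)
Your proposal is correct and takes essentially the same route as the paper: you kill $\alpha_1f\beta_2$ using $\Hom_{\Db{\mathcal{B}}}(\cpx{U}_1, M_2)=0$ to produce $b$ (and then $a$), and for invertibility you build $b'$ from $f^{-1}$, show $bb'-1_{M_1}$ factors through $\cpx{U}_1[1]$, and invoke Corollary \ref{corollary-factorthroughZ0} to replace that factorization by one through the projective object $U_1^1$, exactly as in the paper's proof. The only caveat is your parenthetical appeal to ``$t$-structure orthogonality'': the standard orthogonality gives $\Hom(\D[\leq 0]{\mathcal{B}}, \D[\geq 1]{\mathcal{B}})=0$, not the vanishing of maps from $\D[\geq 1]{\mathcal{B}}$ to $\D[\leq 0]{\mathcal{B}}$ (which fails in general, e.g.\ $\Hom_{\D{\mathcal{B}}}(X[-1],Y)\simeq\Ext^1_{\mathcal{B}}(X,Y)$), so the vanishing really rests on the other reason you give — $\cpx{U}_1$ is a bounded complex of projectives concentrated in degrees $\geq 1$, whence $\Hom_{\Db{\mathcal{B}}}(\cpx{U}_1,M_2)\simeq\Hom_{\Kb{\mathcal{B}}}(\cpx{U}_1,M_2)=0$, which is precisely the paper's justification.
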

 \begin{proof}
 The morphisms $a$ and $b$ exist because $\alpha_1f\beta_2$ must be zero, since
 $$\Hom_{\Db{\mathcal{B}}}(\cpx{U}_1, M_2)\simeq\Hom_{\Kb{\mathcal{B}}}(\cpx{U}_1, M_2)=0.$$
Now assume that $f$ is an isomorphism in $\Db{\mathcal{B}}$. Namely, there is a morphism $g: \cpx{X}_2\lra\cpx{X}_1$ in $\Db{\mathcal{B}}$ such that $fg=1_{\cpx{X}_1}$ and $gf=1_{\cpx{X}_2}$. By the above discussion, there a morphism $c: M_2\lra M_1$ such that $\beta_2c=g\beta_1$.  Then
$$\beta_1-\beta_1bc=\beta_1-f\beta_2c=\beta_1-fg\beta_1=0, $$
and $1_{M_1}-bc$ factorizes through $\cpx{U}_1[1]$. It follows that $1_{M_1}-bc$ factorizes through the projective object $U_1^1$ by Corollary \ref{corollary-factorthroughZ0}. Hence $\underline{b}\underline{c}=\underline{1_{M_1}}$ is the identity map of $M_1$ in $\stb$.  Similarly we have $\underline{c}\underline{b}=\underline{1_{M_2}}$, and therefore $\underline{b}: M_1\lra M_2$ is an isomorphism in $\stb$.
 \end{proof}

 \subsection{The definition of the stable functor}

Keeping the notations above, we can define a functor $\bar{F}: \sta\lra \stb$
  as follows. For each $X\in\mathcal{A}$, we fix a triangle  $$\xi_{X}: \quad \cpx{U}_X\lraf{i_X}F(X)\lraf{\pi_X} M_X\lraf{\mu_X}\cpx{U}_X[1]$$
in $\Db{\mathcal{B}}$ with $M_X\in\mathcal{B}$, and $\cpx{U}_X$ a complex in $\D[{[1, n_X]}]{\projb}$ for some $n_X>0$.  The existence is guaranteed by Lemma \ref{lemma-triangle-for-X}. For each morphism $f: X\ra Y$ in $\mathcal{A}$,  by Lemma \ref{lemma-exist-ab}, we can form a commutative diagram in $\Db{\mathcal{B}}$:
 $$\xymatrix@M=1.5mm{
 \cpx{U}_X\ar[r]^{i_X}\ar[d]^{a_f} & F(X)\ar[r]^{\pi_X}\ar[d]^{F(f)} & M_X\ar[r]^{\mu_X}\ar[d]_{b_f} &\cpx{U}_X[1]\ar[d]^{a_f[1]}\\
 \cpx{U_Y}\ar[r]^{i_Y} & F(Y)\ar[r]^{\pi_Y} & M_Y\ar[r]^{\mu_Y} &\cpx{U_Y}[1]\\
 }$$
If $b'_f$ is another morphism such that $\pi_Xb'_f=F(f)\pi_Y$, then $\pi_X(b_f-b'_f)=0$, and $b_f-b'_f$ factorizes through $\cpx{U}_X[1]$. By Corollary \ref{corollary-factorthroughZ0}, the map $b_f-b'_f$ factorizes through $U_X^1$ which is projective. Hence the morphism $\underline{b_f}\in\stb(M_X, M_Y)$ is uniquely determined by $f$. Moreover, suppose that $f$ factorizes through a projective object $P$ in $\mathcal{A}$, say $f=gh$ for $g: X\ra P$ and $h: P\ra Y$.  Then $\pi_X(b_f-b_gb_h)=F(f)\pi_Y-F(g)\pi_Pb_h=F(f)\pi_Y-F(g)F(h)\pi_Y=0$.  Hence $b_f-b_gb_h$ factorizes through $\cpx{U}_X[1]$, and factorizes through $U_X^1$ by Corollary \ref{corollary-factorthroughZ0}.  Thus $b_f$ factorizes through $P\oplus U_X^1$ which is projective.  Hence $\underline{b_f}=0$.  Thus, we get a well-defined map
$$\phi: \Hom_{\sta}(X, Y)\lra \Hom_{\stb}(M_X, M_Y), \quad \underline{f}\mapsto \underline{b_f}. $$
It is easy to say that $\phi$ is functorial in $X$ and $Y$.  Defining $\bar{F}(X):=M_X$ for each $X\in\mathcal{A}$ and $\bar{F}(\underline{f}):=\phi(\underline{f})$ for each morphism $f$ in $\mathcal{A}$, we get a functor
$$\bar{F}: \sta\lra\stb $$
which is called {\em the stable functor} of $F$.

\medskip
{\parindent=0pt\bf  Example.} (a). If $k$ is a field, and if $F=\cpx{\Delta}\otimesL_A-$ is a standard derived equivalence given by a two-sided tilting complex $\cpx{\Delta}$ of $B$-$A$-bimodules. Assume that $\cpx{\Delta}$ has no homology in negative degrees. Take a projective resolution of $\cpx{\Delta}$ and do good truncation at degree zero.  Then $\cpx{\Delta}$ is isomorphic in $\Db{B\otimes_kA\opp}$ to a complex of the form
$$0\lra M\lra P^1\lra\cdots\lra P^n\lra 0$$
with $P^i$ projective for all $i>0$. By \cite[Proposition 3.1]{Rickard1991}, this complex is a one-sided tilting complex on both sides. It follows that ${}_BM_A$ is projective as one-sided modules, and $F(X)$ is isomorphic to $0\lra M\otimes_AX\lra P^1\otimes_AX\lra \cdots\lra P^n\otimes_AX\lra 0$ with $P^i\otimes_AX$ projective for all $i>0$. In this case, the stable functor $\bar{F}$ of $F$ is induced by the exact functor ${}_BM\otimes_A-: \Modcat{A}\lra\Modcat{B}$.

\medskip
 (b). Let $\mathcal{A}$ be an abelian category with enough projective objects, and let $n$ be a non-negative integer. The $n$th syzygy functor $\Omega_{\mathcal{A}}^n: \sta\lra\sta$ is 
a stable functor of the derived equivalence $[-n]: \Db{\mathcal{A}}\lra\Db{\mathcal{A}}$.

\begin{Prop}
The following diagram is commutative up to isomorphism.
$$\xymatrix@M=2mm{
\sta\ar[d]^{\bar{F}} \ar[r]^(.3){\Sigma} & \Db{\mathcal{A}}/\Kb{\proja}\ar[d]^{F}\\
\stb \ar[r]^(.3){\Sigma} & \Db{\mathcal{B}}/\Kb{\projb},
}$$
where $\Sigma$ is the canonical functor induced by the embedding $\mathcal{A}\hookrightarrow \Db{\mathcal{A}}$.
\label{proposition-stFun-commdiag}
\end{Prop}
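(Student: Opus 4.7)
The plan is to exhibit a natural isomorphism $\eta: F\circ\Sigma \Rightarrow \Sigma\circ\bar{F}$ whose components come directly from the defining triangles $\xi_X$ of $\bar{F}$.

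First I would check that the four functors in the diagram are well-defined. The composite $\mathcal{A}\hookrightarrow\Db{\mathcal{A}}\to\Db{\mathcal{A}}/\Kb{\proja}$ kills any morphism $f: X\to Y$ in $\mathcal{A}$ that factors through a projective object $P\in\proja$, because $P$ lies in $\Kb{\proja}$; hence this composite descends to a functor $\Sigma: \sta\to\Db{\mathcal{A}}/\Kb{\proja}$, and similarly on the $\mathcal{B}$-side. On the other hand, condition (2) of Definition \ref{def-uni-bounded-non-negative} gives $F(\proja)\subseteq\Kb{\projb}$; since $F$ is triangulated and $\Kb{\proja}$ is the smallest thick subcategory containing $\proja$, we get $F(\Kb{\proja})\subseteq\Kb{\projb}$, so $F$ descends to the vertical arrow on the right.

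Next, for each $X\in\mathcal{A}$, recall the fixed triangle
$$\xi_X:\quad \cpx{U}_X\lraf{i_X}F(X)\lraf{\pi_X}M_X\lraf{\mu_X}\cpx{U}_X[1]$$
from Lemma \ref{lemma-triangle-for-X}, in which $\cpx{U}_X\in\D[{[1,n_X]}]{\projb}$ is (isomorphic to) a bounded complex of projectives, hence an object of $\Kb{\projb}$. In the Verdier quotient $\Db{\mathcal{B}}/\Kb{\projb}$ the complex $\cpx{U}_X$ becomes zero, so applying the quotient functor to $\xi_X$ forces $\pi_X$ to be an isomorphism there. I would therefore define $\eta_X$ to be the image of $\pi_X$ in $\Db{\mathcal{B}}/\Kb{\projb}$, viewed as a morphism $F(\Sigma(X))\to\Sigma(\bar{F}(X))$.

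For naturality at a morphism $f: X\to Y$ in $\mathcal{A}$, the very construction of $\bar{F}(\underline{f})=\underline{b_f}$ produced a commutative square $\pi_X\, b_f = F(f)\,\pi_Y$ in $\Db{\mathcal{B}}$; passing to the quotient gives exactly the naturality square
$$\xymatrix@M=1.5mm{
F(\Sigma(X))\ar[r]^{\eta_X}\ar[d]_{F(\Sigma(f))} & \Sigma(\bar{F}(X))\ar[d]^{\Sigma(\bar{F}(\underline{f}))}\\
F(\Sigma(Y))\ar[r]^{\eta_Y} & \Sigma(\bar{F}(Y)).
}$$
The one point requiring a brief argument is that this square is unambiguous: in $\Db{\mathcal{B}}/\Kb{\projb}$ the morphism $\Sigma(\bar{F}(\underline{f}))$ is the image of $b_f$, and any two choices of $b_f$ differ by a map $M_X\to M_Y$ factoring through $U_X^1\in\projb$ (as in the construction preceding the definition of $\bar{F}$), hence agree modulo $\Kb{\projb}$. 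I do not expect a serious obstacle; the whole proposition is essentially an unpacking of the construction of $\bar{F}$, with the key input being that $\cpx{U}_X$ is acyclic in the quotient thanks to its membership in $\D[{[1,n_X]}]{\projb}$.
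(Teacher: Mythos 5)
Your proposal is correct and follows essentially the same route as the paper: the components of the natural isomorphism are the images of the morphisms $\pi_X$ in $\Db{\mathcal{B}}/\Kb{\projb}$, which are isomorphisms there because $\cpx{U}_X\in\Kb{\projb}$, and naturality comes from the defining square $\pi_X b_f=F(f)\pi_Y$. Your extra checks (that the vertical and horizontal functors descend, and that the square is independent of the choice of $b_f$) are details the paper leaves implicit but add nothing new in substance.
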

\begin{proof}
For each $X\in\mathcal{A}$, the morphism $\pi_X: F(X)\ra \bar{F}(X)$ in $\Db{\mathcal{B}}$ can be viewed as a morphism $\pi_X: F\Sigma(X)\lra \Sigma \bar{F}(X)$ in $\Db{\mathcal{B}}/\Kb{\projb}$. We claim that this gives a natural isomorphism from $F\circ\Sigma$ to $\Sigma\circ\bar{F}$.  Since $\cpx{U}_X$ in the triangle $\xi_X$ is a complex in $\Kb{\projb}$, the morphism $\pi_X$ is an isomorphism in $\Db{\mathcal{B}}/\Kb{\projb}$.  Moreover, for each morphism $f: X\ra Y$ in $\mathcal{A}$, one can check from the definition of $\bar{F}$ that there is a commutative diagram
$$\xymatrix{
F\circ \Sigma(X)\ar[r]^{\pi_X}\ar[d]^{F\circ\Sigma (\underline{f})} & \Sigma\circ \bar{F}(X)\ar[d]^{\Sigma\circ\bar{F}(\underline{f})}\\
F\circ \Sigma(Y)\ar[r]^{\pi_X} & \Sigma\circ \bar{F}(Y).\\
}$$
This finishes the proof.
\end{proof}

\subsection{Uniqueness of the stable functor}
From the definition of the stable functor, it is unclear that whether the stable functor is independent of the choices of the triangles $\xi_X$. In this subsection, we shall solve this problem. Actually, we will show that  isomorphic  non-negative functors have isomorphic stable functors.

\smallskip
We keep the notations in the previous subsection.  For each object $X\in\mathcal{A}$, suppose that we choose and fix  another triangle
$$\xi'_{X}:\quad  \cpx{U'_{X}}\lraf{i'_{X}}F(X)\raf{\pi'_{X}} M'_{X}\lraf{\mu'_{X}} \cpx{U'_{X}}[1]$$
in $\Db{\mathcal{B}}$ with $M'_{X}\in \mathcal{B}$ and  $\cpx{U'_{X}}$ a complex in $\D[{[1, n'_X]}]{\projb}$ for some $n'_X>0$. Let $\bar{F}': \sta\lra\stb$ be the functor defined by using the triangles $\xi'_{X}$'s.  That is, $\bar{F}'(X)=M'_{X}$ for each $X\in\mathcal{A}$, and $\bar{F}'(\underline{f})=\underline{b'_{f}}$ for momorphism $f: X\ra Y$ in $\mathcal{A}$, where $b'_{f}: M'_{X}\ra M'_{Y}$ is a morphism in $\mathcal{B}$ such that $\pi'_{X}b'_{f}=F(f)\pi'_{Y}$.

 \begin{Prop}
 The functors $\bar{F}$ and $\bar{F}'$ are isomorphic.
\label{proposition-unique-choices}
\end{Prop}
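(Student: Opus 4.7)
The plan is to construct, for each $X\in \mathcal{A}$, an explicit isomorphism $\eta_X\colon \bar{F}(X)\to \bar{F}'(X)$ in $\stb$ by comparing the two chosen triangles $\xi_X$ and $\xi'_X$, and then to verify that the $\eta_X$ assemble into a natural transformation from $\bar{F}$ to $\bar{F}'$.

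First I would apply Lemma \ref{lemma-exist-ab} to the identity morphism $1_{F(X)}\colon F(X)\to F(X)$ with the two triangles $\xi_X$ and $\xi'_X$. This yields a morphism $\eta_X\colon M_X\to M'_X$ in $\mathcal{B}$ such that $\pi_X\eta_X=\pi'_X$, and Lemma \ref{lemma-exist-ab} guarantees that $\underline{\eta_X}$ is an isomorphism in $\stb$. Thus on objects, $\bar{F}(X)=M_X$ and $\bar{F}'(X)=M'_X$ are isomorphic in $\stb$ via $\underline{\eta_X}$.

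Next I would check naturality. Let $f\colon X\to Y$ be a morphism in $\mathcal{A}$, and consider the two morphisms $b_f\eta_Y$ and $\eta_X b'_f$ from $M_X$ to $M'_Y$ in $\mathcal{B}$. Using the defining relations $\pi_X b_f=F(f)\pi_Y$, $\pi'_X b'_f=F(f)\pi'_Y$, together with $\pi_X\eta_X=\pi'_X$ and $\pi_Y\eta_Y=\pi'_Y$, I compute
$$\pi_X(b_f\eta_Y-\eta_X b'_f)=F(f)\pi_Y\eta_Y-\pi'_X b'_f=F(f)\pi'_Y-F(f)\pi'_Y=0.$$
Hence $b_f\eta_Y-\eta_X b'_f$ factorizes through $\cpx{U}_X[1]$ in $\Db{\mathcal{B}}$. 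Since $\cpx{U}_X[1]$ is a bounded complex whose terms all lie in $\projb$ and are concentrated in non-positive degrees (in particular the degree-zero term of $\cpx{U}_X[1]$ is $U_X^1\in\projb$, and all other terms are projective), Corollary \ref{corollary-factorthroughZ0} applies and shows that $b_f\eta_Y-\eta_X b'_f$ factors through the projective object $U_X^1$ in $\mathcal{B}$. Consequently $\underline{b_f\eta_Y}=\underline{\eta_X b'_f}$ in $\stb$, which is exactly the naturality square
$$\bar{F}(\underline{f})\cdot \underline{\eta_Y}=\underline{\eta_X}\cdot \bar{F}'(\underline{f}).$$

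Combining the two steps, $\{\underline{\eta_X}\}_{X\in \mathcal{A}}$ is a natural isomorphism $\bar{F}\Rightarrow \bar{F}'$. I do not anticipate a serious obstacle: Lemma \ref{lemma-exist-ab} provides the pointwise isomorphisms and Corollary \ref{corollary-factorthroughZ0} is precisely the tool needed to pass from ``factors through $\cpx{U}_X[1]$ in $\Db{\mathcal{B}}$'' to ``factors through a projective in $\mathcal{B}$''; the only subtlety to watch for is that all the vertical maps in the ladder produced by Lemma \ref{lemma-exist-ab} are only well-defined \emph{up to a factorization through $\cpx{U}_X[1]$}, which is exactly what is absorbed by passing to $\stb$.
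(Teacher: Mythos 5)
Your proposal is correct and follows essentially the same route as the paper's proof: apply Lemma \ref{lemma-exist-ab} to the identity of $F(X)$ to obtain the stable isomorphisms $\underline{\eta_X}$, then verify naturality by the same computation with $\pi_X$ and pass from ``factors through $\cpx{U}_X[1]$'' to ``factors through the projective $U_X^1$'' via Corollary \ref{corollary-factorthroughZ0}. (Only a slip of wording: $\cpx{U}_X[1]$ is concentrated in non-negative degrees, not non-positive ones; your parenthetical description of its terms is what actually makes the corollary applicable.)
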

\begin{proof}
   For each $X\in\mathcal{A}$, by Lemma \ref{lemma-exist-ab}, we can form a commutative diagram
   $$\xymatrix{
 \cpx{U}_X\ar[r]^{i_X}\ar[d]^{\alpha_X} & F(X)\ar[r]^{\pi_X}\ar@{=}[d] & M_X\ar[r]^{\mu_X}\ar[d]_{\eta_X} &\cpx{U}_X[1]\ar[d]^{\alpha_X[1]}\\
 \cpx{U'}_{X}\ar[r]^{i'_X} & F(X)\ar[r]^{\pi'_X} & M'_X\ar[r]^{\mu'_X} &\cpx{U'}_X[1], \\
 }$$
in $\Db{\mathcal{B}}$ such that $\underline{\eta_X}$ is an isomorphism in $\stb$.  Now, for each  morphism $f: X\ra Y$ in $\mathcal{A}$, we have
 $$\begin{array}{rl}
     \pi_X b_f\eta_Y & = F(f)\pi_Y\eta_Y\\
                   & =  F(f) \pi'_Y\\
                   &= \pi'_X b'_f\\
                   &=\pi_X\eta_Xb'_f.
 \end{array}$$
 Hence $\pi_X(\eta_Xb'_f- b_f\eta_Y)=0$, and $\eta_Xb'_f- b_f\eta_Y$ factorizes through $\cpx{U}_X[1]$. It follows that $\eta_Xb'_f- b_f\eta_Y$ factorizes through the projective object $U_X^1$  by  Corollarly \ref{corollary-factorthroughZ0}.  This shows that  $\underline{\eta_X}\underline{b'_f}-\underline{b_f}\underline{\eta_Y}=0$, that is,  $$\underline{\eta_X}\bar{F}'(\underline{f})=\bar{F}(\underline{f})\underline{\eta_Y}.$$
Thus, we get a natural transformation $\underline{\eta}: \bar{F}\ra\bar{F}'$ with $\underline{\eta}_X:=\underline{\eta_X}$ for all $X\in\mathcal{A}$. Since we have shown that $\underline{\eta_X}$ is an isomorphism for all $X\in\mathcal{A}$, it follows that $\underline{\eta}: \bar{F}\lra \bar{F}'$ is an isomorphism of functors.
\end{proof}

 The above proposition shows that, up to isomorphism,  the stable functor $\bar{F}$  is independent of the choices of the triangles $\xi_X$'s, and is uniquely determined by $F$. Actually, we can further prove the following theorem.
\begin{Theo}
  Let $F_1, F_2: \Db{\mathcal{A}}\lra\Db{\mathcal{B}}$ be two isomorphic non-negative triangle functors. Then their stable functors $\bar{F}_1$ and $\bar{F}_2$ are isomorphic.
 \label{theorem-unique-F1F2}
 \end{Theo}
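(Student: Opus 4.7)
The plan is to reduce Theorem \ref{theorem-unique-F1F2} to Proposition \ref{proposition-unique-choices}, which ensures that the stable functor of a single non-negative functor does not depend on the chosen triangles $\xi_X$. Given a natural isomorphism $\eta:F_1\lraf{\sim}F_2$, the idea is to transport, via the components $\eta_X$, the triangles used to define $\bar{F}_1$ into admissible triangles for $F_2$, and then show that the resulting incarnation of $\bar{F}_2$ coincides literally with $\bar{F}_1$.

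Concretely, for each $X\in\mathcal{A}$ I would start with the triangle
$$\xi_X:\quad \cpx{U}_X\lraf{i_X} F_1(X)\lraf{\pi_X} M_X\lraf{\mu_X} \cpx{U}_X[1]$$
used in the construction of $\bar{F}_1$, with $M_X\in\mathcal{B}$ and $\cpx{U}_X\in\D[{[1,n_X]}]{\projb}$. Since $\eta_X$ is an isomorphism in $\Db{\mathcal{B}}$, the data
$$\xi'_X:\quad \cpx{U}_X\lraf{i_X\eta_X} F_2(X)\lraf{\eta_X^{-1}\pi_X} M_X\lraf{\mu_X} \cpx{U}_X[1]$$
is isomorphic to $\xi_X$ via the triple $(1_{\cpx{U}_X},\eta_X,1_{M_X})$, hence is itself a distinguished triangle. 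Its outer terms continue to satisfy the hypotheses of Lemma \ref{lemma-triangle-for-X}, so $\xi'_X$ is a legitimate input for constructing the stable functor of $F_2$; denote the resulting functor by $\bar{F}'_2$.

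The central step is to verify that $\bar{F}'_2=\bar{F}_1$ as honest functors $\sta\lra\stb$. On objects this is immediate by construction: $\bar{F}'_2(X)=M_X=\bar{F}_1(X)$. For a morphism $f:X\to Y$ in $\mathcal{A}$, a morphism $b'_f:M_X\to M_Y$ represents $\bar{F}'_2(\underline{f})$ if and only if
$$(\eta_X^{-1}\pi_X)\, b'_f \;=\; F_2(f)\, (\eta_Y^{-1}\pi_Y).$$
Naturality of $\eta$ gives $F_2(f)\eta_Y^{-1}=\eta_X^{-1}F_1(f)$, so cancelling the isomorphism $\eta_X^{-1}$ on the left reduces this condition to $\pi_X b'_f = F_1(f)\pi_Y$, which is exactly the defining equation of the $b_f$ used to compute $\bar{F}_1$. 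Consequently every representative of $\bar{F}_1(\underline{f})$ is a representative of $\bar{F}'_2(\underline{f})$, so $\bar{F}'_2(\underline{f})=\bar{F}_1(\underline{f})$ in $\stb$.

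Finally, Proposition \ref{proposition-unique-choices} applied to $F_2$ gives $\bar{F}'_2\cong\bar{F}_2$, whence $\bar{F}_1\cong\bar{F}_2$. The only point requiring care is the verification that $\xi'_X$ is genuinely distinguished, which follows from closure of distinguished triangles under isomorphism in any triangulated category; the substantial technical content (uniqueness of representatives modulo projectives, and independence from the choice of triangles) has already been absorbed into Proposition \ref{proposition-unique-choices} and Corollary \ref{corollary-factorthroughZ0}, so I do not anticipate any serious obstacle.
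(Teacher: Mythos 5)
Your proposal is correct, but it takes a different route from the paper. The paper proves the theorem directly: for each $X$ it applies Lemma \ref{lemma-exist-ab} to the isomorphism $\eta_X: F_1(X)\to F_2(X)$ to produce comparison maps $\delta_X:\bar{F}_1(X)\to\bar{F}_2(X)$ with $\underline{\delta_X}$ invertible, and then re-runs the factorization argument (via Corollary \ref{corollary-factorthroughZ0}) to check that $\underline{\delta}$ is natural --- essentially repeating the computation in the proof of Proposition \ref{proposition-unique-choices} with $\eta$ inserted. You instead transport the triangles $\xi_X$ along $\eta_X$ to obtain admissible triangles $\xi'_X$ for $F_2$ (distinguished because they are isomorphic to $\xi_X$ via $(1,\eta_X,1)$), observe that naturality of $\eta$ makes the defining equation $(\eta_X^{-1}\pi_X)b'_f=F_2(f)(\eta_Y^{-1}\pi_Y)$ literally equivalent to $\pi_X b'_f=F_1(f)\pi_Y$, so that this incarnation of the stable functor of $F_2$ equals $\bar{F}_1$ on the nose, and then quote Proposition \ref{proposition-unique-choices} to identify it with $\bar{F}_2$. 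Your reduction is shorter and concentrates all the technical content (the factorization-through-$U_X^1$ arguments) in the already-proved Proposition \ref{proposition-unique-choices}, whereas the paper's direct construction has the mild advantage of exhibiting the components of the natural isomorphism $\bar{F}_1\cong\bar{F}_2$ explicitly in terms of $\eta_X$; both arguments are sound, and your cancellation step and the verification that $\xi'_X$ is distinguished are handled correctly.
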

 \begin{proof}
 Suppose that $\eta: F_1\ra F_2$ is an isomorphism of triangle functors.  For each $X\in\mathcal{A}$,  by definition, we have two triangles
 $\cpx{U}_X\lraf{i_{X}}F_1(X)\lraf{\pi_X}\bar{F}_1(X)\lraf{\mu_X}\cpx{U}_X[1]$ and $\cpx{V}_X\lraf{j_X}F_2(X)\lraf{p_X}\bar{F}_2(X)\lraf{\rho_X}\cpx{V}_X[1]$ with  $\cpx{U}_X$ and $\cpx{V}_X$ in $\D[{[1, n_X]}]{\projb}$ for some positive integer $n_X$.  By Lemma \ref{lemma-exist-ab},  we can form a commutative diagram
   $$\xymatrix{
 \cpx{U}_X\ar[r]^{i_X}\ar[d]^{\alpha_X} & F_1(X)\ar[r]^{\pi_X}\ar[d]_{\eta_X} & \bar{F}_1(X)\ar[r]^{\mu_X}\ar[d]_{\delta_X} &\cpx{U}_X[1]\ar[d]^{\alpha_X[1]}\\
 \cpx{V}_{X}\ar[r]^{j_X} & F_2(X)\ar[r]^{p_X} & \bar{F}_2(X)\ar[r]^{\mu'_X} &\cpx{V}_X[1] \\
 }$$
 such that  $\underline{\delta_X}$ is an isomorphism in $\stb$, since $\eta_X$ is an  isomorphism in $\Db{\mathcal{B}}$.  Now for each morphism $f: X\lra Y$ in $\mathcal{A}$, there are  morphisms $b_f: \bar{F}_1(X)\lra\bar{F}_1(Y)$ and $b'_f: \bar{F}_2(X)\lra\bar{F}_2(Y)$ with $$\pi_Xb_f=F_1(f)\pi_Y, \quad p_Xb'_f=F_2(f)p_Y$$
 such that $\bar{F}_1(\underline{f})=\underline{b_f}$ and $\bar{F}_2(\underline{f})=\underline{b'_f}$.  Now we have
 $$\begin{array}{rl}
   \pi_X(b_f\delta_Y-\delta_Xb'_f) &= F_1(f)\pi_Y\delta_Y- \eta_Xp_Xb'_f\\
   & = F_1(f)\eta_Yp_Y-\eta_XF_2(f)p_Y\\
   &=\big( F_1(f)\eta_Y-\eta_XF_2(f)\big)p_Y=0.
 \end{array}$$
 Hence $b_f\delta_Y-\delta_Xb'_f$ factorizes through $\cpx{U}_X[1]$, and consequently factorizes through the projective object $U_X^1$ by Corollary \ref{corollary-factorthroughZ0}.  Hence $0=\underline{b_f}\underline{\delta_Y}-\underline{\delta_X}\underline{b'_f}=\bar{F}_1(\underline{f})\underline{\delta_Y}-\underline{\delta_X}\bar{F}_2(\underline{f})$.  This gives rise to a natural transformation $\underline{\delta}: \bar{F}_1\lra \bar{F}_2$ with $\underline{\delta}_X:=\underline{\delta_X}$ for each $X\in\mathcal{A}$.  Recall that $\underline{\delta_X}$ is an isomorphism for all $X\in\mathcal{A}$. This proves that $\underline{\delta}: \bar{F}_1\lra\bar{F}_2$ is an isomorphism.
\end{proof}

\subsection{The composition of stable functors}

Suppose that $\mathcal{A}, \mathcal{B}$ and $\mathcal{C}$ are abelian categories with enough projective objects.  Let $F: \Db{\mathcal{A}}\lra\Db{\mathcal{B}}$ and $G: \Db{\mathcal{B}}\lra\Db{\mathcal{C}}$ be non-negative triangle functors. It is easy to see that  $GF$ is also non-negative. The relationship among the stable functors of $F, G$ and $GF$ is the following theorem.
\begin{Theo}
The functors $\bar{G}\circ\bar{F}$ and $\overline{GF}$ are isomorphic.
\label{theorem-stFunctor-composite}
\end{Theo}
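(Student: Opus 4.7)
The plan is to construct, for each $X\in\mathcal{A}$, a triangle that simultaneously computes $\bar{G}\bar{F}(X)$ and has the form required to compute $\overline{GF}(X)$, and then invoke the uniqueness result Proposition \ref{proposition-unique-choices}. The starting point is the defining triangle $\xi_X: \cpx{U}_X\to F(X)\to\bar{F}(X)\to\cpx{U}_X[1]$ with $\cpx{U}_X\in\D[{[1,n_X]}]{\projb}$. Applying the triangle functor $G$ yields
$$G(\cpx{U}_X)\lra GF(X)\lra G(\bar{F}(X))\lra G(\cpx{U}_X)[1].$$
The key preparatory observation is that $G(\cpx{U}_X)\in\D[{[1,m]}]{\projc}$ for some $m>0$. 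This follows because $\cpx{U}_X$ is an iterated extension of the $U_X^i[-i]$ for $i\in[1,n_X]$, and non-negativity of $G$ says that each $G(U_X^i)$ lies in $\K[{[0,\cdot]}]{\projc}$, so $G(U_X^i)[-i]$ lives in degrees $\geq i\geq 1$; closure of $\D[{[1,m]}]{\projc}$ under extensions finishes this step.

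Next, paste this triangle with the defining triangle $\cpx{V}_{\bar{F}(X)}\to G(\bar{F}(X))\to \bar{G}\bar{F}(X)\to\cpx{V}_{\bar{F}(X)}[1]$ of $\bar{G}$ applied to $\bar F(X)$ via the octahedral axiom. The octahedron produces an object $\cpx{W}_X$ sitting in a triangle $G(\cpx{U}_X)\to\cpx{W}_X\to\cpx{V}_{\bar{F}(X)}\to G(\cpx{U}_X)[1]$, together with a triangle
$$\cpx{W}_X\lra GF(X)\lra \bar{G}\bar{F}(X)\lra \cpx{W}_X[1].$$
Since both $G(\cpx{U}_X)$ and $\cpx{V}_{\bar{F}(X)}$ belong to $\D[{[1,\cdot]}]{\projc}$, so does $\cpx{W}_X$. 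Hence this last triangle has precisely the form used in the definition of the stable functor of $GF$. By Proposition \ref{proposition-unique-choices}, the stable functor of $GF$ computed from this specific choice of triangles is isomorphic, as an assignment on objects, to $\overline{GF}$, giving the desired object-level isomorphism $\overline{GF}(X)\cong\bar{G}\bar{F}(X)$.

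For naturality, given $f: X\to Y$ in $\mathcal{A}$, let $b_f:\bar{F}(X)\to\bar{F}(Y)$ be a lift of $F(f)$ in the sense $\pi_Xb_f=F(f)\pi_Y$, and let $c_{b_f}:\bar{G}\bar{F}(X)\to\bar{G}\bar{F}(Y)$ satisfy $p_{\bar{F}(X)}c_{b_f}=G(b_f)p_{\bar{F}(Y)}$, so that $\bar{G}\bar{F}(\underline{f})=\underline{c_{b_f}}$. A direct computation
$$GF(f)\,G(\pi_Y)\,p_{\bar{F}(Y)}=G(\pi_X)\,G(b_f)\,p_{\bar{F}(Y)}=G(\pi_X)\,p_{\bar{F}(X)}\,c_{b_f}$$
shows that $c_{b_f}$ is a valid lift of $GF(f)$ against the octahedral triangle for $GF(X)$. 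The well-definedness argument already used in the definition of $\overline{GF}$ (any two such lifts differ by a map factoring through $\cpx{W}_X[1]$, hence through the projective object $W_X^1$ by Corollary \ref{corollary-factorthroughZ0}) shows that $\underline{c_{b_f}}$ coincides with the morphism that the stable functor built from $\cpx{W}_X$ assigns to $\underline{f}$. Combining with the isomorphism at the object level given by Proposition \ref{proposition-unique-choices}, we obtain a natural isomorphism $\bar{G}\circ\bar{F}\cong\overline{GF}$.

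The main obstacle is the bookkeeping in the octahedral step, specifically verifying that $\cpx{W}_X$ lies in $\D[{[1,m]}]{\projc}$ rather than merely being a bounded complex of projectives; once this positivity and non-negativity interaction is controlled, the rest reduces to the uniqueness machinery already developed in Proposition \ref{proposition-unique-choices} and the standard trick with Corollary \ref{corollary-factorthroughZ0}.
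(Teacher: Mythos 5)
Your proposal is correct and follows essentially the same route as the paper: apply $G$ to the defining triangle of $\bar F$, check $G(\cpx{U}_X)$ lies in $\D[{[1,\cdot]}]{\projc}$ using non-negativity of $G$ on projectives, build $\cpx{W}_X$ by the octahedral axiom so that $\cpx{W}_X\to GF(X)\to\bar G\bar F(X)\to\cpx{W}_X[1]$ is an admissible choice of triangle for $\overline{GF}$, match the lifts on morphisms via Corollary \ref{corollary-factorthroughZ0}, and conclude by uniqueness of the stable functor. The only cosmetic difference is that you verify the degree bound on $\cpx{W}_X$ by closure under extensions, where the paper computes the mapping cone explicitly and cites an auxiliary lemma; both arguments are equivalent.
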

\begin{proof}
For each $X\in\mathcal{A}$, there are two triangles
$$\cpx{U}_X\lraf{i_X}F(X)\lraf{\pi_X}\bar{F}(X)\lraf{\mu_X}\cpx{U}_X[1], $$
$$\cpx{V}_{X}\lraf{j_X}G(\bar{F}(X))\lraf{p_X}\bar{G}\bar{F}(X)\lraf{\omega_X}\cpx{V}_X[1]$$
with $\cpx{U}_X\in\D[{[1, n_X]}]{\projb}$ and $\cpx{V}_X\in\D[{[1, m_X]}]{\projc}$.
By the octahedral axiom, we can form a commutative diagram
$$\xymatrix@M=1.5mm{
&&\cpx{V}_X\ar@{=}[r]\ar[d]^{j_X} &\cpx{V}_X\ar[d]^{\epsilon_X}\\
G(\cpx{U}_X)\ar[d]^{u_X} \ar[r]^{G(i_X)} & GF(X)\ar[r]^{G(\pi_X)}\ar@{=}[d] &G(\bar{F}(X))\ar[r]^{G(\mu_X)}\ar[d]^{p_X} &G(\cpx{U}_X)[1]\ar[d]^{u_X[1]}\\
\cpx{W}_X\ar[r]^{\alpha_X} &GF(X)\ar[r]^{\beta_X}& \bar{G}\bar{F}(X)\ar[r]^{\gamma_X}\ar[d]^{\omega_X} &\cpx{W}_X[1]\ar[d]^{v_X}\\
&& \cpx{V}_X[1]\ar@{=}[r]& \cpx{V}_X[1]
}$$
with all rows and columns being triangles in $\Db{\mathcal{C}}$. Since $\cpx{U}_X$ is a complex in $\D[{[1, n_X]}]{\projb}$, and $G(U_X^i)$ is isomorphic to a complex $\D[{[0, t_i]}]{\projc}$ for all $i=1, \cdots, n_X$, it follows, by \cite[Lemma 2.1]{Hu2012} for example, that $G(\cpx{U}_X)$ is isomorphic in $\Db{\mathcal{C}}$ to a complex in $\D[{[1,a_X]}]{\projc}$ for some $a_X>0$. Recall that $\cpx{V}_X$ is a complex in $\D[{[1, m_X]}]{\projc}$.  As a result, the complex $\cpx{W}_X[1]$, which is the mapping cone of $\epsilon_X$, is isomorphic to a complex in $\D[{[0, m_X+a_X-1]}]{\projc}$. Hence we can assume that $\cpx{W}_X$ is  a complex in $\D[{[1, m_X+a_X]}]{\projc}$.  Thus  the stable functor of $GF$ can be defined by fixing, for each  $X\in\mathcal{A}$,  the triangle
$$\cpx{W}_X\lraf{\alpha_X}GF(X)\lraf{\beta_X}\bar{G}\bar{F}(X)\lraf{\gamma_X}\cpx{W}_X[1].$$
Therefore, for each $X\in\mathcal{A}$, we have $\overline{GF}(X)=\bar{G}\bar{F}(X)$.

Let $f: X\lra Y$ be a morphism in $\mathcal{A}$. By the construction of stable functor, there is a morphism $b_f: \bar{F}(X)\lra\bar{F}(Y)$ in $\mathcal{B}$ such that $\pi_Xb_f=F(f)\pi_Y$, and $\bar{F}(\underline{f})=\underline{b_f}$, and  there is a  morphism
$$c_f: \bar{G}\bar{F}(X)\lra\bar{G}\bar{F}(Y)$$
in $\mathcal{C}$ such that $\beta_Xc_f=GF(f)\beta_Y$ and $\overline{GF}(\underline{f})=\underline{c_f}$.  Also there is a morphism
$$c'_f: \bar{G}\bar{F}(X)\lra\bar{G}\bar{F}(Y)$$
in $\mathcal{C}$ such that $p_Xc'_f=G(b_f)p_Y$ and $\bar{G}(\underline{b_f})=\underline{c'_f}$. Now we have the following
$$\begin{array}{rl}
\beta_X(c_f-c'_f) &= GF(f)\beta_Y-G(\pi_X)p_Xc'_f\\
                           &= GF(f)\beta_Y-G(\pi_X)G(b_f)p_Y\\
                           &=GF(f)\beta_Y-G(\pi_Xb_f)p_Y\\
                           &= GF(f)\beta_Y-G(F(f)\pi_Y)p_Y\\
                           &= GF(f)\beta_Y-GF(f)G(\pi_Y)p_Y\\
                           &=  GF(f)\beta_Y-GF(f)\beta_Y=0.
\end{array}$$
Hence $c_f-c'_f$ factorizes through $\cpx{W}_X[1]$, and consequently $c_f-c'_f$ factorizes through the projective object $W_X^1$ by Corollary \ref{corollary-factorthroughZ0}. Therefore we have $\underline{c_f}=\underline{c'_f}$, and $$\overline{GF}(\underline{f})=\underline{c_f}=\underline{c'_f}=\bar{G}(\underline{b_f})=\bar{G}\bar{F}(\underline{f}).$$
This shows that, by choosing the triangles carefully, we get $\bar{G}\circ\bar{F}=\overline{GF}$. Since the stable functor is unique up to isomorphism,  we are done.
\end{proof}

An immediate consequence is the following.
\begin{Koro}
Keep the notations above. The functors $\bar{F}\circ\Omega_{\mathcal{A}}\simeq \Omega_{\mathcal{B}}\circ \bar{F}$.
\label{corollary-comm-with-omega}
\end{Koro}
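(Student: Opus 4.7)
The plan is to reduce the statement to a formal consequence of the two main theorems of this section, namely Theorem \ref{theorem-stFunctor-composite} and Theorem \ref{theorem-unique-F1F2}. Recall from the Example~(b) preceding Proposition \ref{proposition-stFun-commdiag} that the syzygy functor $\Omega_{\mathcal{A}}$ (respectively $\Omega_{\mathcal{B}}$) is the stable functor of the shift $[-1]: \Db{\mathcal{A}} \lra \Db{\mathcal{A}}$ (respectively $[-1]: \Db{\mathcal{B}} \lra \Db{\mathcal{B}}$). A quick verification shows that $[-1]$ is a non-negative triangle functor on each side: for any $X \in \mathcal{A}$ the complex $X[-1]$ is concentrated in degree $1$, hence has zero homology in negative degrees; and if $X$ is projective then $X[-1]$ already lies in $\Kb{\proja}$ with zero terms in negative degrees.

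Next, I would note that since $F$ is a triangle functor, there is a canonical natural isomorphism $F \circ [-1] \simeq [-1] \circ F$ of triangle functors $\Db{\mathcal{A}} \lra \Db{\mathcal{B}}$. Furthermore, the composition of two non-negative triangle functors is itself non-negative, so both $F\circ [-1]$ and $[-1]\circ F$ are non-negative functors to which the stable-functor construction applies.

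Finally, the corollary follows by splicing together the compatibility with composition and the uniqueness up to isomorphism. Explicitly,
$$\bar{F}\circ \Omega_{\mathcal{A}} \;\simeq\; \overline{F\circ [-1]} \;\simeq\; \overline{[-1]\circ F} \;\simeq\; \Omega_{\mathcal{B}} \circ \bar{F},$$
where the first and last isomorphisms are instances of Theorem \ref{theorem-stFunctor-composite} applied to the pairs $(F,[-1])$ and $([-1],F)$, while the middle isomorphism is Theorem \ref{theorem-unique-F1F2} applied to the two isomorphic non-negative triangle functors $F\circ[-1]$ and $[-1]\circ F$.

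There is no serious obstacle here; the real content has already been absorbed into the two preceding theorems, and the only substantive observation is that $\Omega_{\mathcal{A}}$ and $\Omega_{\mathcal{B}}$ are themselves instances of the stable-functor construction, applied to the shift.
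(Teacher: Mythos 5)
Your proof is correct and follows essentially the same route as the paper: the paper's own proof is the one-line observation that $F\circ[-1]\simeq[-1]\circ F$ together with Theorems \ref{theorem-unique-F1F2} and \ref{theorem-stFunctor-composite}, with the identification $\Omega=\overline{[-1]}$ coming from Example (b). You have simply spelled out the details (non-negativity of $[-1]$ and the chain of isomorphisms) that the paper leaves implicit.
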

\begin{proof}
Since $F\circ [-1]\simeq [-1]\circ F$, the corollary follows from Theorem \ref{theorem-unique-F1F2} and Theorem \ref{theorem-stFunctor-composite}.
\end{proof}

\subsection{Exactness of the stable functor}

Although it is hard to say whether the stable functor $\bar{F}$ is an exact functor or not,  the following proposition shows that the stable functor does have certain ``exactness" property.

\begin{Prop}
Keep the notations above.  Suppose that $0\ra X\raf{f}Y\raf{g} Z\ra 0$ is an exact sequence in $\mathcal{A}$.  Then there is an exact sequence
$$0\lra \bar{F}(X)\lraf{[a, b]} \bar{F}(Y)\oplus P\lraf{\left[\begin{smallmatrix}u&v\\s&t \end{smallmatrix}\right]} \bar{F}(Z)\oplus Q\lra 0$$
in $\mathcal{B}$ for some projective objects $P$ and $Q$ such that $\bar{F}(\underline{f})=\underline{a}$ and $\bar{F}(\underline{g})=\underline{u}$.
\label{proposition-stfunctor-property-exseq}
\end{Prop}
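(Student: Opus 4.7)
\emph{Plan.} The short exact sequence gives a distinguished triangle $X\lraf{f}Y\lraf{g}Z\to X[1]$ in $\Db{\mathcal{A}}$, and applying the triangle functor $F$ yields a triangle $F(X)\lraf{F(f)}F(Y)\lraf{F(g)}F(Z)\to F(X)[1]$ in $\Db{\mathcal{B}}$. By Lemma \ref{lemma-triangle-for-X}, each of $F(X),F(Y),F(Z)$ is isomorphic in $\Db{\mathcal{B}}$ to a bounded complex of the form $[\bar F(\cdot)\to U_{\cdot}^{1}\to\cdots\to U_{\cdot}^{n}]$ with $\bar F(\cdot)$ in degree $0$ and projective terms in positive degrees. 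These complexes satisfy the hypothesis of Proposition \ref{prop-cpxXcpxY}, so $F(f)$ and $F(g)$ are represented by honest chain maps $\alpha:F(X)\to F(Y)$ and $\beta:F(Y)\to F(Z)$. A routine check shows that the degree-$0$ components $\alpha^{0}$ and $\beta^{0}$ satisfy $\underline{\alpha^{0}}=\underline{b_f}=\bar F(\underline f)$ and $\underline{\beta^{0}}=\underline{b_g}=\bar F(\underline g)$, since any chain homotopy between two such chain maps alters their degree-$0$ part only by a morphism factoring through $U_X^{1}$.

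Form the mapping cone $C\in\Kb{\mathcal{B}}$ of $\alpha$. Since $C\simeq F(Z)$ in $\Db{\mathcal{B}}$ we have $H^{-1}(C)=0$; the differential $d_C^{-1}:\bar F(X)\to U_X^{1}\oplus\bar F(Y)$ has components $-d_X^{0}$ and $\alpha^{0}$, and its injectivity is precisely the vanishing of $H^{-1}(C)$. Setting $P_0:=U_X^{1}$, $a:=\alpha^{0}$, $b:=-d_X^{0}$, this produces a short exact sequence
$$0\lra\bar F(X)\lraf{[a,b]}\bar F(Y)\oplus P_0\lra N\lra 0$$
in $\mathcal{B}$ with $N=\Coker[a,b]$. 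The cokernel $N$ is the degree-$0$ term of $\tau^{\geq 0}C$, which sits in a triangle $\tilde{\cpx{U}}\to F(Z)\to N\to\tilde{\cpx{U}}[1]$ in $\Db{\mathcal{B}}$ with $\tilde{\cpx{U}}$ a bounded complex of projectives in strictly positive degrees. Comparing this triangle with $\xi_Z$ via Lemma \ref{lemma-exist-ab} applied to the identity of $F(Z)$ produces an isomorphism $N\simeq\bar F(Z)$ in $\stb$, hence projective objects $P_1$ and $Q$ in $\mathcal{B}$ with $N\oplus P_1\simeq\bar F(Z)\oplus Q$.

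Adjoining $P_1$ to the middle term of the above exact sequence and using this isomorphism on the right yields
$$0\lra\bar F(X)\lraf{[a,b,0]}\bar F(Y)\oplus P_0\oplus P_1\lra\bar F(Z)\oplus Q\lra 0,$$
which is the required exact sequence with $P:=P_0\oplus P_1$. The identification $\underline u=\bar F(\underline g)$ follows by chasing the composition $\bar F(Y)\hookrightarrow\bar F(Y)\oplus P_0\oplus P_1\to \bar F(Z)\oplus Q\to\bar F(Z)$ through the stable isomorphism: its degree-$0$ component is exactly $\beta^{0}|_{\bar F(Y)}$ modulo maps factoring through a projective object, as guaranteed by Corollary \ref{corollary-factorthroughZ0}.

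\emph{Main obstacle.} The principal technical point is the identification $N\oplus P_1\simeq\bar F(Z)\oplus Q$, together with the verification that the $(1,1)$-entry of the second matrix in the final exact sequence, chased through this identification, is stably equal to $b_g$. Both rely on Lemma \ref{lemma-exist-ab} applied to the identity morphism of $F(Z)$ and on the recognition, furnished by Corollary \ref{corollary-factorthroughZ0}, that the relevant error terms factor through projective objects. Getting the bookkeeping right, so that the naturally arising monomorphism from the mapping cone $C$ really does have cokernel stably equivalent to $\bar F(Z)$ with the induced projection being $b_g$, is what makes the proof more delicate than its abelian-category appearance suggests.
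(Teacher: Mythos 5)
Your argument is correct, but it takes a genuinely different route from the paper's. The paper also starts from the triangle $F(X)\raf{F(f)}F(Y)\raf{F(g)}F(Z)\ra F(X)[1]$ with all three objects realized as complexes having $\bar{F}(-)$ in degree $0$ and projectives in positive degrees, and with $F(f),F(g)$ realized as chain maps via Proposition \ref{prop-cpxXcpxY}; but instead of forming the cone of $F(f)$, it uses Proposition \ref{prop-cpxXcpxY} a second time to realize the isomorphism $F(X)\simeq \con(F(g))[-1]$ by an honest chain map $\cpx{r}$, observes that $\con(\cpx{r})$ is acyclic with projective terms in high degrees, strips off split summands to get a four-term exact sequence ending in a projective $V$, and then rolls that up (adding $V$ to the middle term) into the desired short exact sequence, in which $a=r^0$ and $u=q^0$ are read off directly as degree-zero components of chain maps. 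Your route -- injectivity of $[a,b]$ from $H^{-1}(\con(\alpha))=0$, then identifying $\Coker[a,b]$ with $\bar{F}(Z)$ up to projective summands via Lemma \ref{lemma-exist-ab} and the fact (from the preliminaries) that a stable isomorphism becomes an isomorphism after adding projectives -- is more conceptual and makes the exactness part slicker, but it shifts the real work to the final identification $\underline{u}=\bar{F}(\underline{g})$, which the paper's construction gets for free. That step, which you only sketch, does go through: one must choose the isomorphism $\con(\alpha)\simeq F(Z)$ compatibly with the maps from $F(Y)$ (axiom TR3), note that the composite $F(Y)\ra\con(\alpha)\ra N$ agrees at the chain level with $\pi_Y$ followed by $\bar{F}(Y)\ra N$, deduce $\pi_Y\cdot\big((\bar F(Y)\ra N)\theta\big)=F(g)\pi_Z$ for the comparison map $\theta:N\ra\bar F(Z)$ furnished by Lemma \ref{lemma-exist-ab}, and conclude via Corollary \ref{corollary-factorthroughZ0} that the difference with $b_g$ factors through a projective; the ingredients you cite are exactly these, so the argument is sound, though this verification deserves to be written out rather than asserted. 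The trade-off, in short: the paper's proof is longer on matrix bookkeeping but self-contained and explicit about $a$ and $u$; yours is shorter and reuses the uniqueness machinery of Section \ref{section-stablefunctor}, at the cost of a final diagram chase.
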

\begin{proof}
For each $X\in\mathcal{A}$, since $F$ is a non-negative functor, we may assume that $F(X)$ is a complex $\cpx{P}_X\in\D[{[0, n_X]}]{\mathcal{B}}$ with $P_X^0=\bar{F}(X)$ and $P_X^i\in\projb$ for all $i>0$.

From the exact sequence $0\ra X\raf{f}Y\raf{g} Z\ra 0$ in $\mathcal{A}$, we get a triangle in $ \Db{\mathcal{A}}$:
$$X\raf{f}Y\raf{g} Z\raf{h} X[1].$$
Applying the functor $F$ results in a  triangle in $\Db{\mathcal{B}}$:
$$\cpx{P}_X\raf{F(f)}\cpx{P}_Y\raf{F(g)} \cpx{P}_Z\raf{F(h)} \cpx{P}_X[1].$$
By Proposition \ref{prop-cpxXcpxY}, the morphisms $F(f)$ and $F(g)$ are induced by chain maps $\cpx{p}$ and $\cpx{q}$, respectively. That is,  $F(f)=\cpx{p}$ and $F(g)=\cpx{q}$. There is a commutative diagram in $\Db{\mathcal{B}}$:
 $$\xymatrix@M=2mm{
 \cpx{P}_Z[-1]\ar[r]\ar@{=}[d] & \cpx{P}_X\ar[r]^{\cpx{p}}\ar[d]_{r} & \cpx{P}_Y\ar[r]^{\cpx{p}}\ar@{=}[d] &\cpx{P}_Z\ar@{=}[d]\\
 \cpx{P}_Z[-1]\ar[r] & \con(\cpx{q})[-1]\ar[r]^(.65){\cpx{\pi}} & \cpx{P}_Y\ar[r]^{\cpx{q}} &\cpx{P}_Z, \\
 }$$
for some isomorphism $r$, where $\cpx{\pi}=(\pi^i)$ with $\pi^i:P_Y^i\oplus P_Z^{i-1}\ra P_Y^i$ the canonical projection for each integer $i$.
By Proposition \ref{prop-cpxXcpxY}, the morphism $r$ is induced by a chain map $\cpx{r}$. Then  cone($\cpx{r}$) is of the form
$$\xymatrix@C=15mm@M=3mm{
0\ar[r] & P_X^0\ar[r]^(.42){[-d, r^0]} & P_X^1\oplus P_Y^0\ar[r]^(.45){\left[\begin{smallmatrix}-d&x&y\\0&-d&q^0 \end{smallmatrix}\right]} & P_X^2\oplus P_Y^1\oplus P_Z^0\ar[r] & \cdots,
}$$
where $r^1=[x, y]:P_X^1\ra P_Y^1\oplus P_Z^0$ and  $P_X^i, P_Y^i$ and $P_Z^i$ are projective for $i\geq 1$.
Since $r=\cpx{r}$ is an isomorphism in $\Db{\mathcal{A}}$, the mapping cone $\con(\cpx{r})$ is an acyclic complex. Thus dropping the split direct summands of $\con(\cpx{r})$, we get an exact sequence
$$\xymatrix@C=8mm@M=3mm{
0\ar[r] & P_X^0\ar[r]^(.42){[-d, r^0]} & P_X^1\oplus P_Y^0\ar[r]^{\left[\begin{smallmatrix}\alpha&\beta\\ \gamma&q^0 \end{smallmatrix}\right]} & Q \oplus P_Z^0\ar[r]^(.55){\left[\begin{smallmatrix}\delta\\ \eta\end{smallmatrix}\right]} & V\ar[r] &0,
}$$
where $Q=P_X^2\oplus P_Y^1$ and $V$ is a projective object in $\mathcal{B}$. Let $[\epsilon,\chi]: V\lra Q\oplus P_Z^0$ be such that $[\epsilon,\chi]\left[\begin{smallmatrix}\delta\\ \eta\end{smallmatrix}\right]=1_V$. We claim that the sequence
$$\xymatrix@C=10mm@M=3mm{
0\ar[r] & P_X^0\ar[r]^(.38){[0, -d, r^0]} & V\oplus P_X^1\oplus P_Y^0\ar[r]^(.55){\left[\begin{smallmatrix}\epsilon&\chi\\ \alpha& \beta\\ \gamma &q^0 \end{smallmatrix}\right]} & Q \oplus P_Z^0\ar[r] &0,
}$$
is exact.  It suffices to prove that the sequence is exact at the middle term.  Clearly $[0, -d, r^0]\left[\begin{smallmatrix}\epsilon& \chi\\\alpha&\beta\\ \gamma&q^0 \end{smallmatrix}\right]=0$. If $[x_1, x_2, x_3]$ is a morphism from an object $U$ to $V\oplus P_X^1\oplus P_Y^0$ such that $[x_1, x_2, x_3]\left[\begin{smallmatrix}\epsilon&\chi\\ \alpha& \beta\\ \gamma &q^0 \end{smallmatrix}\right]=0$, then $x_1=[x_1, x_2, x_3]\left[\begin{smallmatrix}\epsilon&\chi\\\alpha&\beta\\ \gamma&q^0 \end{smallmatrix}\right]\left[\begin{smallmatrix}\delta\\\eta\end{smallmatrix}\right]=0$, and consequently $[x_2, x_3]\left[\begin{smallmatrix}\alpha&\beta\\ \gamma&q^0 \end{smallmatrix}\right]=0$. Thus $[x_2, x_3]$ factorizes uniquely through  $[-d, r^0]$ by exactness, and $[x_1, x_2, x_3]=[0, x_2, x_3]$ factorizes through $[0, -d, r^0]$.
Setting $P=V\oplus P_X^1, a=r^0, b=[0, -d], u=q^0, v=\gamma, s=\left[\begin{smallmatrix}\chi \\ \beta\end{smallmatrix}\right]$ and $t=\left[\begin{smallmatrix}\epsilon\\\alpha\end{smallmatrix}\right]$, we get the desired exact seuqence.
\end{proof}


\section{Gorenstein projective objects under the stable functor}\label{section-Gorenstein}

Let $\mathcal{A}$ be an abelian category with enough projective objects, and let $\proja$ be the full subcategory of $\mathcal{A}$ consisting of all projective objects.  An object $X\in\mathcal{A}$ is called {\em Gorenstein projective} if there is an exact sequence
$\cpx{P}$:
$$\cdots\lra
P^{-1}\lraf{d^{-1}}P^{0}\lraf{d^{0}}
P^{1}\lraf{d^{1}}\cdots
$$
in $\C{\proja}$ such that $\HomP_{\mathcal{A}}(\cpx{P}, Q)$ is exact for all $Q\in\proja$ and $X\simeq \Img d^0$.
We denote by $\GProj{\mathcal{A}}$ the full subcategory consisting of all Gorenstein projective objects. Then $\GProj{\mathcal{A}}$ is a Frobenius category with projective (=injective) objects being the projective objects in $\mathcal{A}$. The stable category $\stGProj{\mathcal{A}}$ is a triangulated category with shifting functor $\Omega_{\mathcal{A}}^{-1}$.  The following lemma is an alternative description of Gorenstein projective objects.

\begin{Lem} Let $\mathcal{A}$ be an abelian category with enough projective objects. Then  an object  $X\in\mathcal{A}$ is Gorenstein projective if and only if there  are short exact sequences
 $$ 0\lra X^i\lra P^{i+1}\lra X^{i+1}\lra 0$$
in $\mathcal{A}$ with $P^i$ projective  and $X^i\in{}^{\perp}\proja$ for $i\in\mathbb{Z}$ such that  $X^0=X$.
 \label{lemma-gproj-ses}
\end{Lem}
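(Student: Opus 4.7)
The plan is to prove the two directions by passing between a single totally acyclic complex and the system of short exact sequences obtained by breaking it at each degree.

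For the forward direction, suppose $X$ is Gorenstein projective witnessed by a totally acyclic complex $\cpx{P}\in\C{\proja}$ with $X\simeq \Img d^0$. I would set $X^i:=\Img d^i$ for each $i\in\mathbb{Z}$. Since $\cpx{P}$ is exact, the usual breaking of a long exact sequence yields short exact sequences $0\to X^{i-1}\to P^i\to X^i\to 0$ (equivalently, reindexed, $0\to X^i\to P^{i+1}\to X^{i+1}\to 0$), and $X^0=X$ by construction. It remains to check $X^i\in{}^{\perp}\proja$, i.e.\ $\Ext^j_{\mathcal{A}}(X^i,Q)=0$ for all $j>0$ and $Q\in\proja$. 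Applying $\Hom_{\mathcal{A}}(-,Q)$ to the short exact sequences and dimension-shifting, one gets $\Ext^j(X^i,Q)\simeq\Ext^{j+1}(X^{i-1},Q)\simeq\cdots$ for all $j\geq 1$; these can be shifted arbitrarily far to the left, so they are computed as cohomology of $\HomP_{\mathcal{A}}(\cpx{P},Q)$ in appropriate degrees, which vanishes by the defining exactness of $\HomP_{\mathcal{A}}(\cpx{P},Q)$.

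For the converse, given short exact sequences $0\to X^i\to P^{i+1}\to X^{i+1}\to 0$ with $X^0=X$ and $X^i\in{}^{\perp}\proja$, I would splice them into the complex
$$\cpx{P}:\;\cdots\lra P^{-1}\lraf{d^{-1}}P^0\lraf{d^0}P^1\lraf{d^1}P^2\lra\cdots,$$
where $d^i$ is the composite $P^i\twoheadrightarrow X^i\hookrightarrow P^{i+1}$. Exactness at $P^{i+1}$ follows because $\Img d^i=X^i$ (as a subobject of $P^{i+1}$) and $\Ker d^{i+1}=\Ker(P^{i+1}\twoheadrightarrow X^{i+1})=X^i$, so $\cpx{P}\in\C{\proja}$ is an exact complex with $X\simeq \Img d^0$.

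The remaining task, which is the only step requiring the Ext-orthogonality hypothesis, is to show that $\HomP_{\mathcal{A}}(\cpx{P},Q)$ is exact for every $Q\in\proja$. Applying $\Hom_{\mathcal{A}}(-,Q)$ to each short exact sequence $0\to X^i\to P^{i+1}\to X^{i+1}\to 0$ and using $\Ext^1_{\mathcal{A}}(X^{i+1},Q)=0$, I obtain a short exact sequence
$$0\lra \Hom_{\mathcal{A}}(X^{i+1},Q)\lra \Hom_{\mathcal{A}}(P^{i+1},Q)\lra \Hom_{\mathcal{A}}(X^i,Q)\lra 0.$$
Splicing these short exact sequences recovers $\HomP_{\mathcal{A}}(\cpx{P},Q)$ and exhibits it as exact, giving the required total acyclicity. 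The whole argument is quite mechanical; the only genuine point is the reminder that ${}^{\perp}\proja$ in the sense defined in Section~\ref{section-preliminaries} is precisely $\Ext^{>0}_{\mathcal{A}}(-,\proja)=0$, so that the short exact sequences above have no obstruction.
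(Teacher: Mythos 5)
The paper states this lemma without proof (it is presented as a known alternative description of Gorenstein projective objects), so there is no argument of the authors to compare against; your proof is the standard splice/unsplice argument one would expect, and it is correct in substance: breaking the totally acyclic complex at its images in one direction, and splicing the short exact sequences back together and dualizing them with $\Ext^1(X^{i+1},Q)=0$ in the other. The converse direction as you wrote it is complete.

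The only blemish is in the forward direction: the displayed dimension-shift isomorphism has mismatched indices. From $0\to X^{i-1}\to P^i\to X^i\to 0$ with $P^i$ projective one gets $\Ext^j(X^{i-1},Q)\simeq\Ext^{j+1}(X^i,Q)$ for $j\geq 1$, i.e.\ moving the module index to the \emph{left} lowers the Ext degree (so the chain bottoms out at $\Ext^1$), while moving to the right raises it; your formula $\Ext^j(X^i,Q)\simeq\Ext^{j+1}(X^{i-1},Q)$ does neither and would force $\Ext^j(X^i,Q)\simeq\Ext^{j+2}(X^i,Q)$, which is not available. This slip is harmless, because the clause you append is the real argument and needs no dimension shifting at all: the hard truncation $\cdots\lra P^{i-1}\lra P^i\lra X^i\lra 0$ is a projective resolution of $X^i=\Img d^i$, so for every $j\geq 1$ the group $\Ext^j_{\mathcal{A}}(X^i,Q)$ is literally the cohomology of $\HomP_{\mathcal{A}}(\cpx{P},Q)$ at the term $\Hom_{\mathcal{A}}(P^{i-j},Q)$ (the extra term $\Hom_{\mathcal{A}}(P^{i+1},Q)$ of the full complex does not affect cohomology in these degrees), and this vanishes by total acyclicity. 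With that sentence in place of the faulty isomorphism, the proof is complete; your closing remark identifying ${}^{\perp}\proja$ with the vanishing of $\Ext^{>0}_{\mathcal{A}}(-,\proja)$ is indeed the right reading of the paper's definition.
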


The following proposition shows that the stable functor of certain non-negative functor preserves Gorenstein projective modules.
\begin{Prop}
Let $\mathcal{A}$ and $\mathcal{B}$ be two abelian categories with enough projective objects.   Suppose that $F: \Db{\mathcal{A}}\lra \Db{\mathcal{B}}$ is a non-negative triangle functor admitting a right adjoint $G$ with $G(Q)\in\Kb{\proja}$ for all $Q\in\projb$. Let $m$ be a non-negative integer. Then we have the following.

$(1)$. If $X\in {}\perpg{m}\proja$, then $\bar{F}(X)\in {}\perpg{m}\projb$.

$(2)$. If $X\in \GProj{\mathcal{A}}$, then $\bar{F}(X)\in \GProj{\mathcal{B}}$.
\label{proposition-gCM}
\end{Prop}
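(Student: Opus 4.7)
The plan is to establish (1) first and then feed it into (2) with $m=0$. For (1), fix the defining triangle $\cpx{U}_X \lra F(X) \lra \bar{F}(X) \lra \cpx{U}_X[1]$ of the stable functor, in which $\cpx{U}_X \in \D[{[1,n_X]}]{\projb}$. For any $Q\in\projb$ and $i>0$, apply $\Hom_{\Db{\mathcal{B}}}(-, Q[i])$: since $\cpx{U}_X$ is a bounded complex of projectives with all terms in strictly positive degrees while the stalk $Q[i]$ sits in cohomological degree $-i \leq -1$, both $\Hom(\cpx{U}_X, Q[i])$ and $\Hom(\cpx{U}_X[1], Q[i])$ vanish. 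The long exact sequence therefore yields an isomorphism $\Hom(\bar{F}(X), Q[i]) \cong \Hom(F(X), Q[i])$ for every $i > 0$, hence in particular for all $i > m$ since $m\geq 0$. By adjunction, $\Hom(F(X), Q[i]) \cong \Hom_{\Db{\mathcal{A}}}(X, G(Q)[i])$, so it suffices to show the right-hand side vanishes for $i > m$.

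The key sub-claim is that $G(Q)$ may be assumed to lie in $\K[\leq 0]{\proja}$. Running the adjunction in the other direction, for any $P\in\proja$ and $j > 0$ one has $\Hom(P, G(Q)[j]) \cong \Hom(F(P), Q[j])$. Non-negativity condition~(2) places $F(P)$ in $\K[\geq 0]{\projb}$, and since $Q[j]$ lives in strictly negative degree the right-hand side vanishes. A homotopy argument then lets one peel off top-degree terms: if the top degree $b$ of $G(Q)$ is positive, the vanishing above forces $1_{G(Q)^b}$ to factor through the incoming differential, exhibiting $G(Q)^b$ as a split summand of $G(Q)^{b-1}$ and eliminating a contractible direct summand. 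Once $G(Q)$ is realized as a bounded complex of projectives in non-positive degrees, it is a finite iterated extension of stalk shifts $P^k[-k]$ with $k \leq 0$; so $G(Q)[i]$ is built from $P^k[i-k]$, and since $i - k \geq i > m$, each $\Hom(X, P^k[i-k])$ vanishes by the hypothesis $X \in {}^{\perp_{>m}}\proja$. Hence $\Hom(X, G(Q)[i]) = 0$ and (1) follows.

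For (2), apply Lemma \ref{lemma-gproj-ses} to the Gorenstein projective $X$ to obtain short exact sequences $0 \to X^i \to P^{i+1} \to X^{i+1} \to 0$ with $P^{i+1} \in \proja$, $X^i \in {}^{\perp}\proja$, and $X^0 = X$. Proposition \ref{proposition-stfunctor-property-exseq} converts each into an exact sequence
$$0 \lra \bar{F}(X^i) \lra R_i \lra \bar{F}(X^{i+1}) \oplus Q_i \lra 0$$
in $\mathcal{B}$ with $R_i, Q_i \in \projb$ (noting that $\bar{F}(P^{i+1})$ is projective, being the zeroth term of a complex representing $F(P^{i+1})$ in $\K[\geq 0]{\projb}$). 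Because $Q_i$ is projective, $\Ext^1(Q_i, \bar{F}(X^i)) = 0$, so the class of this extension in $\Ext^1(\bar{F}(X^{i+1}) \oplus Q_i, \bar{F}(X^i))$ comes entirely from the summand $\Ext^1(\bar{F}(X^{i+1}), \bar{F}(X^i))$. It is therefore equivalent to the direct sum of a genuine extension $0 \to \bar{F}(X^i) \to S_i \to \bar{F}(X^{i+1}) \to 0$ with the trivial extension of $Q_i$ by $0$, forcing $R_i \cong S_i \oplus Q_i$ and hence $S_i$ projective. By (1) applied with $m = 0$, each $\bar{F}(X^i) \in {}^{\perp}\projb$, so the family $\{\bar{F}(X^i), S_i\}_{i \in \mathbb{Z}}$ meets the hypotheses of Lemma \ref{lemma-gproj-ses}, proving $\bar{F}(X) = \bar{F}(X^0) \in \GProj{\mathcal{B}}$.

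The main obstacle is the sub-claim in step~(1) that $G(Q)$ is isomorphic in $\Kb{\proja}$ to a complex in $\K[\leq 0]{\proja}$. Converting the $\Ext$-vanishing against projectives into a structural reduction requires testing with $P = G(Q)^b$ at the top degree $b$, so that $1_{G(Q)^b}$ is forced to lift through the differential $G(Q)^{b-1}\to G(Q)^b$, yielding the split summand needed to decrease $b$ inductively. Once this is in hand, everything else reduces to bookkeeping with triangles and the extension-splitting trick using the projectivity of $Q_i$.
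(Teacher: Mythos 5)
Your proof is correct and follows essentially the same route as the paper's: truncate $G(Q)$ to non-positive degrees using the adjunction and non-negativity of $F$, use the defining triangle of $\bar{F}(X)$ together with the adjunction to reduce (1) to the vanishing of $\Hom_{\Db{\mathcal{A}}}(X, G(Q)[i])$ for $i>m$, and deduce (2) from Lemma \ref{lemma-gproj-ses}, Proposition \ref{proposition-stfunctor-property-exseq} and part (1) with $m=0$. You merely spell out details the paper leaves implicit (splitting off the top-degree terms of $G(Q)$, the projectivity of $\bar{F}(P)$ for $P$ projective, and removing the extra projective summand $Q_i$ from the exact sequences), which is fine.
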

\begin{proof}
For each $Q\in\projb$, by assumption $G(Q)\in\Kb{\proja}$. We claim that $G(Q)$ is isomorphic to a complex in $\Kb{\proja}$ with zero terms in all positive degrees. This
is equivalent to saying that  $\Hom_{\Db{\mathcal{A}}}(P, G(Q)[i])=0$
for all $P\in\proja$ and all $i>0$. However, this follows from the isomorphism $$\Hom_{\Db{\mathcal{A}}}(P, G(Q)[i])\simeq \Hom_{\Db{\mathcal{B}}}(F(P), Q[i])$$ and the assumption that $F$ is non-negative.

(1). Suppose that $X\in {}\perpg{m}\proja$. Then $\proja\subseteq X\perpg{m}$.  It is clear that $X\perpg{m}$ is closed under the shift functor $[1]$ and extensions. It follows that  each bounded complex in $\Kb{\proja}$, which has zero terms in all positive degrees, are in $X\perpg{m}$.  In particular, $G(Q)\in X\perpg{m}$ for all $Q\in\projb$. By the definition of the stable functor $\bar{F}$, there is a triangle
 $$\cpx{U}_X\lraf{i_X}F(X)\lraf{\pi_X} \bar{F}(X)\lraf{\mu_X}\cpx{U}_X[1]$$
in $\Db{\mathcal{B}}$ with $\cpx{U}_X\in\D[{[1,n]}]{\projb}$ for some $n>0$.  Let $Q\in\projb$, and let $i$ be a positive integer. We have $$\Hom_{\Db{\mathcal{B}}}(\cpx{U}_X[1], Q[i])=0=\Hom_{\Db{\mathcal{B}}}(\cpx{U}_X, Q[i]).$$
Applying $\Hom_{\Db{\mathcal{B}}}(-, Q[i])$ to the above triangle results in an isomorphism
$$\Hom_{\Db{\mathcal{B}}}(\bar{F}(X), Q[i])\simeq \Hom_{\Db{\mathcal{B}}}(F(X), Q[i]).$$
The latter is further isomorphic to $\Hom_{\Db{\mathcal{A}}}(X, G(Q)[i])$, which vanishes for $i>m$. Hence $\bar{F}(X)\in {}\perpg{m} \projb$.

(2).  Suppose that $X$ is Gorenstein projective. By Lemma \ref{lemma-gproj-ses}, there are short exact sequences
$$0\lra X^{i-1}\lra P^{i}\lra X^{i}\lra 0, \quad i\in\mathbb{Z}$$
with $P^{i}$ projective and $X^i\in {}^{\perp}\proja$ for all $i$ such that $X^0=X$. It follows from Lemma \ref{proposition-stfunctor-property-exseq} that there exist short exact sequences
$$0\lra \bar{F}(X^{i-1})\lra Q^i\lra \bar{F}(X^i)\lra 0, \quad i\in\mathbb{Z}$$
in $\mathcal{B}$ with $Q^i$ projective for all $i$.  Moreover, the objects $\bar{F}(X^i), i\in\mathbb{Z}$ are all in ${}^{\perp}\projb$ by (1). Hence, by Lemma \ref{lemma-gproj-ses}, the object $\bar{F}(X)$ is Gorenstein projective.
\end{proof}

It is well-known that, for an abelian category $\mathcal{A}$ with enough projective objects, there is a triangle embedding $\stGProj{\cal A}\hookrightarrow \Db{\mathcal{A}}/\Kb{\proja}$ induced by the canonical embedding $\mathcal{A}\hookrightarrow \Db{\mathcal{A}}$.  One may ask whether the stable functor is compatible with this embedding. The following theorem provides an affirmative answer.

\begin{Theo}
Let $\mathcal{A}$ and $\mathcal{B}$ be abelian categories with enough projective objects, and let $F:\Db{\mathcal{A}}\lra\Db{\mathcal{B}}$ be a  triangle functor. Then we have the following.

$(1)$. If $F$ is non-negative and  admits a right adjoint $G$ with $G(Q)\in\Kb{\proja}$ for all $Q\in\projb$, then  there 
 is a commutative diagram (up to natural isomorphism) of triangle functors.
$$\xymatrix@M=2mm@C=10mm{
\stGProj{\mathcal{A}}\ar[d]^{\bar{F}}\ar@{^{(}->}[r] &   \Db{\mathcal{A}}/\Kb{\proja}\ar[d]^{F}\\
\stGProj{\mathcal{B}}\ar@{^{(}->}[r]& \Db{\mathcal{B}}/\Kb{\projb},
}$$

$(2)$ If $F$ is  a uniformly bounded non-negative equivalence, then the functor $\bar{F}: \stGProj{\mathcal{A}}\lra \stGProj{\mathcal{B}}$ in the above diagram is a triangle equivalence.
\label{theorem-stable-gproj-equiv}
\end{Theo}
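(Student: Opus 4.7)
The plan is to handle parts (1) and (2) in turn, leveraging Proposition~\ref{proposition-stFun-commdiag}, Proposition~\ref{proposition-stfunctor-property-exseq}, Proposition~\ref{proposition-gCM}, Corollary~\ref{corollary-comm-with-omega}, and Theorem~\ref{theorem-stFunctor-composite} already established.

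For part (1), Proposition~\ref{proposition-gCM}(2) ensures $\bar{F}$ restricts to a functor $\stGProj{\mathcal{A}}\to\stGProj{\mathcal{B}}$, so the main content is to verify this restriction is a triangle functor. First I would establish compatibility with the suspension $\Omega^{-1}$: Corollary~\ref{corollary-comm-with-omega} supplies a natural isomorphism $\bar{F}\circ\Omega_{\mathcal{A}}\simeq\Omega_{\mathcal{B}}\circ\bar{F}$, and restricted to $\stGProj{\mathcal{A}}$ where $\Omega$ is an autoequivalence, this gives $\bar{F}\circ\Omega_{\mathcal{A}}^{-1}\simeq\Omega_{\mathcal{B}}^{-1}\circ\bar{F}$. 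Next I would verify preservation of distinguished triangles: every distinguished triangle in $\stGProj{\mathcal{A}}$ is the image of a short exact sequence $0\to X\to Y\to Z\to 0$ in $\GProj{\mathcal{A}}$, and Proposition~\ref{proposition-stfunctor-property-exseq} yields a short exact sequence
$$0\lra \bar{F}(X)\lra \bar{F}(Y)\oplus P\lra \bar{F}(Z)\oplus Q\lra 0$$
in $\mathcal{B}$ with projective $P, Q$, whose end terms are Gorenstein projective by Proposition~\ref{proposition-gCM}(2); since $\GProj{\mathcal{B}}$ is closed under extensions, the middle term is also Gorenstein projective, and this sequence becomes a distinguished triangle in $\stGProj{\mathcal{B}}$ whose first two morphisms are $\bar{F}(\underline{f})$ and $\bar{F}(\underline{g})$. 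The commutative diagram is then the restriction of Proposition~\ref{proposition-stFun-commdiag}, since the horizontal embedding $\stGProj{\mathcal{A}}\hookrightarrow\Db{\mathcal{A}}/\Kb{\proja}$ is the restriction of $\Sigma$.

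For part (2), I would build a quasi-inverse of $\bar{F}$ from that of $F$. Since $F$ is an equivalence, its quasi-inverse $G=F^{-1}$ serves simultaneously as left and right adjoint, so Lemma~\ref{lemma-unibounded-non-negative}(3) produces an integer $n\geq 0$ (with $F(\mathcal{A})\subseteq\D[{[0,n]}]{\mathcal{B}}$) such that $H:=G[-n]$ is uniformly bounded and non-negative. A direct adjunction calculation identifies the right adjoint of $H$ as $F[n]$, and $F[n](P)=F(P)[n]\in\Kb{\projb}$ for $P\in\proja$ by non-negativity of $F$, so $H$ satisfies the hypotheses of part (1), giving a triangle functor $\bar{H}\colon\stGProj{\mathcal{B}}\to\stGProj{\mathcal{A}}$. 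By Theorem~\ref{theorem-stFunctor-composite} (together with Theorem~\ref{theorem-unique-F1F2}) the composites satisfy $\bar{H}\bar{F}\simeq\overline{HF}$ and $\bar{F}\bar{H}\simeq\overline{FH}$; since both $FH$ and $HF$ are isomorphic to $[-n]$, whose stable functor is $\Omega^n$ by the example following Proposition~\ref{proposition-stFun-commdiag}, we obtain $\bar{H}\bar{F}\simeq\Omega_{\mathcal{A}}^n$ and $\bar{F}\bar{H}\simeq\Omega_{\mathcal{B}}^n$. On $\stGProj$, $\Omega^n$ is an autoequivalence, so $\Omega_{\mathcal{A}}^{-n}\circ\bar{H}$ is a two-sided quasi-inverse of $\bar{F}$, using the shift-compatibility from part (1) to chase through the two identities.

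The most delicate step will be the triangle preservation in part (1): Proposition~\ref{proposition-stfunctor-property-exseq} delivers only a short exact sequence augmented by projective summands, so one must argue carefully that the middle term lands in $\GProj{\mathcal{B}}$ (invoking closedness of Gorenstein projectives under extensions) and that passage to $\stGProj{\mathcal{B}}$ recovers exactly the distinguished triangle specified by the Frobenius structure, with the correct morphisms induced by $\bar{F}$. A secondary subtlety is justifying that the quasi-inverse of an equivalence functions as both left and right adjoint so that Lemma~\ref{lemma-unibounded-non-negative}(3) applies; this is standard but worth pinning down via the unit and counit being isomorphisms.
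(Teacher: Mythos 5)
Your proposal is correct and follows essentially the same route as the paper: part (1) from Propositions \ref{proposition-stFun-commdiag}, \ref{proposition-gCM}, \ref{proposition-stfunctor-property-exseq} and Corollary \ref{corollary-comm-with-omega}, and part (2) by applying Lemma \ref{lemma-unibounded-non-negative} to the quasi-inverse $G$, checking $G[-n]$ satisfies the hypotheses of (1), and using Theorems \ref{theorem-unique-F1F2} and \ref{theorem-stFunctor-composite} to identify the composites with $\Omega^n$, which is an autoequivalence on the stable Gorenstein projective categories. Your write-up merely spells out details (extension-closure of $\GProj{\mathcal{B}}$, the explicit quasi-inverse $\Omega_{\mathcal{A}}^{-n}\circ\overline{G[-n]}$) that the paper leaves implicit.
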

\begin{proof}  The commutative diagram follows from  Proposition \ref{proposition-stFun-commdiag} and Proposition \ref{proposition-gCM}.  It follows from Corollary \ref{corollary-comm-with-omega} and Proposition \ref{proposition-stfunctor-property-exseq} that   $\bar{F}: \stGProj{\mathcal{A}}\lra\stGProj{\mathcal{B}}$ is a triangle functor. 

(2)  Let $G$ be a quasi-inverse of $F$. Then $G$ is both a left adjoint and a right adjoint of $F$. By Lemma \ref{lemma-unibounded-non-negative}, there exists some integer $n>0$ such that $G[-n]$ is non-negative. Note that $F[n]$ is a right adjoint of $G[-n]$, and sends projective objects in $\mathcal{A}$ to complexes in $\Kb{\projb}$. By (1), the stable functor $\overline{G[-n]}$ of $G[-n]$ induces a triangle functor from $\stGProj{\mathcal{B}}$  to $\stGProj{\mathcal{A}}$.  Thus, by Theorem \ref{theorem-unique-F1F2} and Theorem  \ref{theorem-stFunctor-composite}, we have isomorphisms of functors
$$\bar{F}\circ \overline{G[-n]}\simeq \overline{F\circ G[-n]}\simeq \overline{[-n]}\simeq\Omega_{\mathcal{B}}^n \quad \mbox{ and }\quad \overline{G[-n]}\circ\bar{F}\simeq \overline{G[-n]\circ F}\simeq \overline{[-n]}\simeq \Omega_{\mathcal{A}}^n.$$
Note that $\Omega_{\mathcal{A}}$ and $\Omega_{\mathcal{B}}$ induce auto-equivalences of $\stGProj{\mathcal{A}}$ and $\stGProj{\mathcal{B}}$, respectively.  It follows that $\bar{F}: \stGProj{\mathcal{A}}\lra\stGProj{\mathcal{B}}$ is an equivalence.
\end{proof}

Let $F:\Db{\Modcat{A}}\ra\Db{\Modcat{B}}$ be a derived equivalence between two rings such that the tilting complex associated to $F$ has zero terms in all positive degrees.  By Lemma \ref{lemma-derequiv-non-negative},  the functor $F$ satisfies the   assumption of Theorem \ref{theorem-stable-gproj-equiv} (2). Thus, we have the following corollary.

\begin{Koro}
Let $A$ and $B$ be rings, and let $F: \Db{\Modcat{A}}\lra\Db{\Modcat{B}}$ be a non-negative derived equivalence. Then $\bar{F}: \stGProj{A}\lra\stGProj{B}$ is a triangle equivalence. 
\label{corollary-stgproj-equiv-ring-1}
\end{Koro}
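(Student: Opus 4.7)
The plan is to verify the hypotheses of Theorem \ref{theorem-stable-gproj-equiv} (2) and apply it directly. That theorem asserts that for any uniformly bounded, non-negative triangle equivalence between abelian categories with enough projective objects, the induced stable functor restricts to a triangle equivalence between the stable categories of Gorenstein projective objects. Taking $\mathcal{A} = \Modcat{A}$ and $\mathcal{B} = \Modcat{B}$ sets up the ambient framework, since both are abelian categories with enough projectives.

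Two of the three hypotheses are immediate: $F$ is a triangle equivalence by definition of a derived equivalence, and $F$ is non-negative by assumption. The remaining hypothesis is uniform boundedness, which is exactly the content of Lemma \ref{lemma-derequiv-non-negative} (1). Concretely, if $\cpx{T}$ is a tilting complex associated to $F$, say concentrated in degrees $r$ through $s$, then for each $A$-module $X$ the isomorphism $H^i(F(X)) \simeq \Hom_{\Db{\Modcat{A}}}(\cpx{T}, X[i])$ vanishes outside $[r,s]$, so $F(\Modcat{A}) \subseteq \D[{[r,s]}]{\Modcat{B}}$. Once uniform boundedness is in hand, Theorem \ref{theorem-stable-gproj-equiv} (2) supplies the desired triangle equivalence $\bar{F}:\stGProj{A}\lra\stGProj{B}$.

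I do not expect a genuine obstacle here, because all the substantive work has already been absorbed into Theorem \ref{theorem-stable-gproj-equiv} (2). Inside that theorem one shifts a quasi-inverse $G$ of $F$ by a sufficiently small integer $[-n]$ to make $G[-n]$ non-negative (Lemma \ref{lemma-unibounded-non-negative}), observes that its right adjoint $F[n]$ sends projectives to $\Kb{\projb}$ by non-negativity of $F$, and then invokes the uniqueness theorem (Theorem \ref{theorem-unique-F1F2}) and the composition theorem (Theorem \ref{theorem-stFunctor-composite}) to identify $\bar{F}\circ\overline{G[-n]}$ and $\overline{G[-n]}\circ\bar{F}$ with syzygy powers $\Omega_{\mathcal{B}}^n$ and $\Omega_{\mathcal{A}}^n$, which are auto-equivalences of the two stable categories of Gorenstein projectives. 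Since this machinery has been set up precisely for the present situation, the corollary reduces to a clean citation rather than any further calculation.
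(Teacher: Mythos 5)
Your proposal is correct and follows the paper's own route exactly: the paper also deduces uniform boundedness from Lemma \ref{lemma-derequiv-non-negative} and then cites Theorem \ref{theorem-stable-gproj-equiv} (2), so the corollary is indeed a direct application of that theorem with no extra work needed.
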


For a given derived equivalence functor $F$ between two  rings,  by Lemma \ref{lemma-derequiv-non-negative}, $F[m]$ is non-negative when $m$ is sufficiently small.  The following corollary is then clear. 

\begin{Koro}
Let $A$ and $B$ be derived equivalent rings. Then $\stGProj{A}$ and $\stGProj{B}$ are triangle equivalent. 
\label{corollary-stgproj-equiv-ring-2}
\end{Koro}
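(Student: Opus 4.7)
The plan is to reduce the corollary directly to Corollary \ref{corollary-stgproj-equiv-ring-1} via a shift trick, exploiting the fact that Lemma \ref{lemma-derequiv-non-negative} tells us a derived equivalence can always be made non-negative after a shift.

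First, I start with an arbitrary triangle equivalence $F:\Db{\Modcat{A}}\lra\Db{\Modcat{B}}$ witnessing that $A$ and $B$ are derived equivalent. By Lemma \ref{lemma-derequiv-non-negative} (2), there exists an integer $m$ (sufficiently small) such that the tilting complex associated with $F[m]$ has zero terms in all positive degrees, and hence $F[m]:\Db{\Modcat{A}}\lra\Db{\Modcat{B}}$ is a non-negative triangle functor. Since $F$ is already a triangle equivalence and $[m]$ is an auto-equivalence of $\Db{\Modcat{B}}$, the composite $F[m]$ is still a triangle equivalence between $\Db{\Modcat{A}}$ and $\Db{\Modcat{B}}$.

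Next, I apply Corollary \ref{corollary-stgproj-equiv-ring-1} to the non-negative derived equivalence $F[m]$. This yields that the stable functor $\overline{F[m]}:\stGProj{A}\lra\stGProj{B}$ is a triangle equivalence, which in particular shows $\stGProj{A}$ and $\stGProj{B}$ are triangle equivalent, as required.

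There is really no obstacle here: the whole machinery has been set up in the preceding sections, and the corollary is a clean assembly of Lemma \ref{lemma-derequiv-non-negative} with Corollary \ref{corollary-stgproj-equiv-ring-1}. The only subtle point worth being explicit about is that shifting a derived equivalence does not alter the property of being an equivalence, so that after shifting we are still entitled to invoke Corollary \ref{corollary-stgproj-equiv-ring-1}, which requires the non-negative functor to be a derived equivalence (not merely a non-negative triangle functor).
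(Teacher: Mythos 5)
Your proof is correct and follows exactly the route the paper takes: shift the given derived equivalence $F$ by Lemma \ref{lemma-derequiv-non-negative} to obtain a non-negative derived equivalence $F[m]$, and then apply Corollary \ref{corollary-stgproj-equiv-ring-1}. The paper states this in one line immediately before the corollary, so no further comment is needed.
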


Recall that a ring $A$ is called left coherent provided that the category $\modcat{A}$ of left finitely presented $A$-modules is an abelian category.  In this case, the finitely generated Gorenstein projective $A$-modules coincide with those Gorenstein projective modules in $\modcat{A}$. By $\Gproj{A}$ we denote the category of finitely generated Gorenstein projective $A$-modules, and by $\stGproj{A}$ we denote its stable category.

By Rickard's result in \cite{Rickard1989a}. For left coherent rings $A$ and $B$, $\Db{\modcat{A}}$ and $\Db{\modcat{B}}$ are triangle equivalent if and only if $\Db{\Modcat{A}}$ and $\Db{\Modcat{B}}$ are triangle equivalent, and every triangle equivalence between $\Db{\Modcat{A}}$ and $\Db{\Modcat{B}}$ restricts to a triangle equivalence between $\Db{\modcat{A}}$ and $\Db{\modcat{B}}$. Thus,  we obtain the following corollary.

\begin{Koro}
Let $A$ and $B$ be left coherent rings, and let $F: \Db{\modcat{A}}\ra \Db{\modcat{B}}$ be a non-negative derived equivalence. Then $\bar{F}: \stGproj{A}\ra \stGproj{B}$ are triangle equivalence. Particularly, the stable categories of finitely generated Gorenstein projective modules of two derived equivalent coherent rings are triangle equivalent. 
\label{corollary-stGproj-left-coherent}
\end{Koro}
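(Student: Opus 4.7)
My plan is to reduce the coherent case to the ring-level Corollary \ref{corollary-stgproj-equiv-ring-1} by extending $F$ from the finitely presented to the full module categories, using the Rickard-type extension recalled just before the statement. Explicitly, the derived equivalence $F: \Db{\modcat{A}} \lra \Db{\modcat{B}}$ is the restriction of a derived equivalence $\tilde{F}: \Db{\Modcat{A}} \lra \Db{\Modcat{B}}$. A tilting complex $\cpx{T}$ associated to $F$ lies in $\Kb{\pmodcat{A}} \subseteq \Kb{\pModcat{A}}$, so it is also a tilting complex associated to $\tilde{F}$. Since $F$ is non-negative, $\cpx{T}$ may be chosen with zero terms in positive degrees, and Lemma \ref{lemma-derequiv-non-negative} then forces $\tilde{F}$ to be non-negative. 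Corollary \ref{corollary-stgproj-equiv-ring-1} therefore yields a triangle equivalence $\bar{\tilde{F}}: \stGProj{A} \lra \stGProj{B}$.

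Next, I would show that $\bar{\tilde{F}}$ restricts to the required functor $\bar{F}: \stGproj{A} \lra \stGproj{B}$. For $X \in \modcat{A}$ we have $\tilde{F}(X)=F(X) \in \Db{\modcat{B}}$; since $B$ is left coherent, $F(X)$ admits a projective resolution with terms in $\pmodcat{B}$, and the good truncation at degree zero produces a triangle
$$\cpx{U}_X \lra F(X) \lra M_X \lra \cpx{U}_X[1]$$
with $\cpx{U}_X \in \D[{[1,n_X]}]{\pmodcat{B}}$ and $M_X \in \modcat{B}$. By Theorem \ref{theorem-unique-F1F2}, this triangle computes $\bar{\tilde{F}}(X)$ up to isomorphism in $\stb$, so $\bar{\tilde{F}}(X)$ has a representative in $\modcat{B}$. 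Combined with Proposition \ref{proposition-gCM}(2) and the standard fact that in the coherent setting $\Gproj{A}$ coincides with $\modcat{A} \cap \GProj{A}$, this shows $\bar{\tilde{F}}$ sends $\stGproj{A}$ into $\stGproj{B}$.

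To verify that this restriction is itself a triangle equivalence, I would apply the same argument to a suitable shift of a quasi-inverse of $\tilde{F}$: by Lemma \ref{lemma-derequiv-non-negative}, $\tilde{F}^{-1}[-n]$ is non-negative for $n$ large enough, its stable functor then restricts to $\stGproj{B} \lra \stGproj{A}$, and Theorems \ref{theorem-unique-F1F2} and \ref{theorem-stFunctor-composite} identify the two compositions with powers of the syzygy functors, which are auto-equivalences of the stable categories of Gorenstein projectives. The triangle structure of $\bar{F}$ is inherited from the proof of Theorem \ref{theorem-stable-gproj-equiv}. The ``particularly'' statement follows by shifting an arbitrary derived equivalence to make it non-negative, again by Lemma \ref{lemma-derequiv-non-negative}. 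The most delicate point will be confirming that the stable functor built intrinsically on $\modcat{A}$ is compatible with the restriction of the one constructed on $\Modcat{A}$; this is the role played by invoking Theorem \ref{theorem-unique-F1F2} to compare the two defining triangles for $\bar{\tilde{F}}(X)$ and $\bar{F}(X)$.
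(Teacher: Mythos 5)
Your reduction hinges on its very first step: the claim that the given equivalence $F\colon \Db{\modcat{A}}\ra\Db{\modcat{B}}$ \emph{is the restriction} of an equivalence $\tilde{F}\colon \Db{\Modcat{A}}\ra\Db{\Modcat{B}}$. The Rickard result recalled before the corollary goes only in the opposite direction: every equivalence of the big derived categories restricts to one of the small ones. It does give you \emph{some} equivalence $\tilde{F}$ of the big categories, even one with $\tilde{F}(\cpx{T})\simeq B$ for the same tilting complex $\cpx{T}$, but agreement on a tilting complex does not make $\tilde{F}|_{\Db{\modcat{A}}}$ isomorphic to $F$ as a triangle functor, and for abstract (possibly non-standard) equivalences no such extension of the given $F$ is known. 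This matters because the corollary is a statement about the stable functor $\bar{F}$ of the given $F$: without an isomorphism $\tilde{F}|_{\Db{\modcat{A}}}\simeq F$ you cannot invoke Theorem \ref{theorem-unique-F1F2} (which compares two non-negative functors between the \emph{same} pair of derived categories) to identify the restriction of $\bar{\tilde{F}}$ with $\bar{F}$, so Corollary \ref{corollary-stgproj-equiv-ring-1} by itself says nothing about $\bar{F}$. The point you yourself flag as ``delicate'' is exactly where the argument breaks.

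The intended route avoids any extension: since $A$ and $B$ are left coherent, $\modcat{A}$ and $\modcat{B}$ are abelian categories with enough projective objects, so the theory of Sections 4 and 5 applies to them directly. The functor $F$ is non-negative by hypothesis and uniformly bounded by the argument of Lemma \ref{lemma-derequiv-non-negative}(1), run verbatim with an associated tilting complex in $\Kb{\pmodcat{A}}$; hence Theorem \ref{theorem-stable-gproj-equiv}(2) applies and gives that $\bar{F}\colon \stGproj{A}\ra\stGproj{B}$ is a triangle equivalence, using that the Gorenstein projective objects of $\modcat{A}$ are precisely the finitely generated Gorenstein projective $A$-modules. Rickard's restriction result is needed only for the ``particularly'' clause: a derived equivalence between the coherent rings $A$ and $B$ yields an equivalence $\Db{\modcat{A}}\simeq\Db{\modcat{B}}$, which becomes non-negative after a shift. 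Your closing argument (shift a quasi-inverse to make it non-negative, identify the two composites with syzygy powers via Theorems \ref{theorem-unique-F1F2} and \ref{theorem-stFunctor-composite}) is sound, but it must be carried out intrinsically for the functors on $\Db{\modcat{A}}$ and $\Db{\modcat{B}}$ rather than for $\tilde{F}$.
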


\noindent
{\em Remark.} This generalizes a result of Kato \cite{Kato2002}, where it was proved that standard derived equivalences between two left and right coherent rings induce triangle equivalences between stable categories of finitely generated Gorenstein projective modules.

\section{An example}\label{section-example}
For a finite dimensional algebra $\Lambda$, in general, it is very hard  to find all the indecomposable Gorenstein projective modules in $\Gproj{\Lambda}$.  However, if $\Lambda$ is derived equivalent to another algebra $\Gamma$ for which the Gorenstein projective modules are known, then the stable functor will be helpful to describe the Gorenstein projective modules in $\Gproj{\Lambda}$.

Let $k$ be a field, and let $k[\epsilon]$ be the algebra of dual numbers, that is, the quotient algebra of the polynomial algebra $k[x]$ modulo the ideal generated by $x^2$. Let $A$ be the $k$-algebra given by the quiver
$$\xy <16pt,0pt>:
(0,0)*+0{\bullet}="1",
(2,0)*+0{\bullet}="3",
(4,0)*+0{\bullet}="2n-1",
(6,0)*+0{\bullet}="2n+1",  
(-1,-2)*+0{\bullet}="0",
(1, -2)*+0{\bullet}="2", 
(3,-2)*+0{\bullet}="2n-2",
(5,  -2)*+0{\bullet}="2n", 
(3,-.1)*+0{\cdots},
(0, .55)*+{1},
(2, .55)*+{3}, 
(4, .55)*+{2n-1}, 
(6, .55)*+{2n+1}, 
(-1, -2.5)*+{0}, 
(1,-2.5)*+{2}, 
(3, -2.5)*+{2n-2}, 
(5, -2.5)*+{2n},
{\ar^{\alpha} "1"; "0"},
{\ar_(.4){\beta} "1"; "3"},
{\ar^{\alpha} "3"; "2"}, 
{\ar_(.4){\beta} "2n-1"; "2n+1"},
{\ar^{\alpha}  "2n-1"; "2n-2"}, 
{\ar^{\alpha}  "2n+1"; "2n"}, 
\endxy$$
with all possible relations  $\beta\alpha=0$. Then $A$ is a tilted algebra, and is derived equivalent to the path algebra, denoted by $B$,  of the following quiver.
$$\xy <16pt,0pt>:
(0,0)*+0{\bullet}="1",
(2,0)*+0{\bullet}="3",
(4,0)*+0{\bullet}="2n-1",
(6,0)*+0{\bullet}="2n+1",  
(-1,-2)*+0{\bullet}="0",
(1, -2)*+0{\bullet}="2", 
(3,-2)*+0{\bullet}="2n-2",
(5,  -2)*+0{\bullet}="2n", 
(3,-.1)*+0{\cdots},
(0, .55)*+{1},
(2, .55)*+{3}, 
(4, .55)*+{2n-1}, 
(6, .55)*+{2n+1}, 
(-1, -2.5)*+{0}, 
(1,-2.5)*+{2}, 
(3, -2.5)*+{2n-2}, 
(5, -2.5)*+{2n},
{\ar "0"; "1"},
{\ar "1"; "2"},
{\ar "2"; "3"}, 
{\ar "2n-2"; "2n-1"}, 
{\ar  "2n-1"; "2n"},
{\ar  "2n"; "2n+1"},  
\endxy$$
Let $\Lambda:=k[\epsilon]\otimes_kA$ and let $\Gamma:=k[\epsilon]\otimes_kB$. Then $\Lambda$ and $\Gamma$ are also derived equivalent.  The Gorenstein projective modules over $\Gamma$ have been described by Ringel and Zhang in \cite{Ringel2011-preprint}. They also proved that $\stGproj{\Gamma}$ is equivalent to the orbit category $\Db{B}/[1]$.  This means that the indecomposable non-projective Gorenstein projective $\Gamma$-modules are one-to-one correspondent to the indecomposable $B$-modules. Then correspondence reads as follows. Let $S$ be the unique simple $k[\epsilon]$-module, and let $Q_i$ be the indecomposable projective $B$-module corresponding to the vertex $i$ for all $i$. For each $i\in\{0, 1,\cdots, 2n+1\}$, and $1\leq l\leq 2n+2-i$, we denote by $X(i, l)$ the indecomposable $B$-module with top vertex $i$ and length $l$. We write $M(i, l)$ for the corresponding Gorenstein projective $\Gamma$-module.  If $X(i, l)$ is projective, that is, $i+l=2n+2$, then $M(i, l)=S\otimes Q_i$. If $X(i, l)$ is not projective, then there is an short exact sequence
$$0\lra S\otimes Q_{i+l}\lra M(i, l)\lra S\otimes Q_i\lra 0. \quad\quad (*)$$
Here we shall use the stable functor of the derived equivalence between $\Gamma$ and $\Lambda$ to get all the indecomposable Gorenstein projective modules over $\Lambda$.

  For each $i\in\{0,1, \cdots, 2n+1\}$, we denote by $P_i$  the indecomposable projective $A$-module corresponding to the vertex $i$.  The derived equivalence between $A$ and $B$ is given by the tilting module
$$\bigoplus_{i=0}^n\big(P_{2i+1}\oplus\tau^{-1}S_{2i}\big), $$
where $S_{2i}$ is the simple $A$-module corresponding to the vertex $2i$. Note that $\tau^{-1}S_{2i}$ has a projective resolution
$$0\lra P_{2i}\lra P_{2i+1}\lra \tau^{-1}S_{2i}\lra 0.$$
Thus, we get a derived equivalence $F: \Db{B}\lra\Db{A}$ such that $F(Q_{2i+1})\simeq P_{2i+1}[-1]$ and $F(Q_{2i})$ is 
$$0\lra P_{2i}\lra P_{2i+1}\lra 0$$
with $P_{2i}$ in degree zero for all $0\leq i\leq n$. By \cite{Rickard1991}, there is a derived equivalence $F': \Db{\Gamma}\lra \Db{\Lambda}$, which sends $k[\epsilon]\otimes Q_{2i+1}$ to $k[\epsilon]\otimes P_{2i+1}[-1]$, and sends $k[\epsilon]\otimes Q_{2i}$ to  the complex 
$$0\lra k[\epsilon]\otimes P_{2i}\lra k[\epsilon]\otimes P_{2i+1}\lra 0$$
for all $0\leq i\leq n$. 

\smallskip
For each $i\in\{0, 1, \cdots, 2n+1\}$, and for each $1\leq l\leq 2n+2-i$, let $N(i, l)$ be the image of $M(i, l)$ under the stable functor of $F'$. Then it is easy to see that  the image $N(2i+1, 2n-2i+1)$ of $M(2i+1, 2n-2i+1) (=S\otimes Q_{2i+1})$ is $\Omega (S\otimes P_{2i+1})$, which is isomorphic to $S\otimes P_{2i+1}$.  The module $N(2i, 2n-2i+2)$ fits into the following  pullback diagram
$$\xymatrix{
N(2i, 2n-2i+2)\ar[r]\ar[d] & k[\epsilon]\otimes P_{2i+1}\ar@{->>}[d]\\
S\otimes P_{2i}\ar[r] & S\otimes P_{2i+1},
}$$
and can be diagrammatically  presented as follows
$$\xy <12pt,0pt>:
(0,0)*+0{\bullet}="2i+1",
(2,0)*+0{\bullet}="2i+3",
(4,0)*+0{\bullet}="2n-1",
(6,0)*+0{\bullet}="2n+1",  
(-1,-2)*+0{\bullet}="2i",
(0,-3)*+0{\bullet}="2ia", 
(3,-.1)*+0{\cdots},
(0, .55)*+{\scriptstyle{2i+1}},  
(6, .55)*+{\scriptstyle{2n+1}}, 
(-1.55, -2)*+{\scriptstyle{2i}},
(.55, -3)*+{\scriptstyle{2i}}, 
{\ar "2i+1"; "2i"},
{\ar "2i+1"; "2i+3"}, 
{\ar "2n-1"; "2n+1"},
{\ar "2ia"; "2i"},  
\endxy$$
Each vertex of the above diagram corresponds to a basis vector of the module, and the arrow from $2i$ to $2i$ corresponds to the action of $\epsilon$. The other arrow corresponds to the action of the corresponding arrow in the quiver of $A$.  Since the stable functor is a triangle equivalence, we can use the short exact sequence $(*)$ to get $N(i, l)$ for $1\leq l<2n+2-i$. The result can be listed as the following table. 
\begin{center}
\begin{tabular}{|c|c|}
\hline
$i, l$ &  $N(i, l)$ \\
\hline
$i$ and $l$ are even &  $\xy <12pt,0pt>:
(0,0)*+0{\bullet}="i+1",
(2,0)*+0{\bullet}="i+3",
(4,0)*+0{\bullet}="i+l+1",
(4,.55)*+{\scriptstyle{i+l+1}}, 
(6,0)*+0{\bullet}="i+l+3",
(8, 0)*+0{\bullet}="2n-1",
(10, 0)*+0{\bullet}="2n+1",   
(-1,-2)*+0{\bullet}="i",
(0,-3)*+0{\bullet}="ia", 
(3,-.1)*+0{\cdots},
(7,-.1)*+0{\cdots},
(5,-1)*+0{\bullet}="i+l+1a",
(7,-1)*+0{\bullet}="i+l+3a",
(9, -1)*+0{\bullet}="2n-1a",
(11, -1)*+0{\bullet}="2n+1a", 
(4,-3)*+0{\bullet}="i+la",
(5, -4)*+0{\bullet}="i+lb",
(8,-1.1)*+0{\cdots},
(0, .55)*+{\scriptstyle{i+1}},  
(10, .55)*+{\scriptstyle{2n+1}}, 
(-1.4, -2)*+{\scriptstyle{i}},
(.4, -3)*+{\scriptstyle{i}}, 
(3.9, -1)*+{\scriptstyle{i+l+1}}, 
(3.2, -3)*+{\scriptstyle{i+l}}, 
(5.8, -4)*+{\scriptstyle{i+l}}, 
{\ar "i+1"; "i"},
{\ar "i+1"; "i+3"}, 
{\ar "2n-1"; "2n+1"},
{\ar "i+l+1"; "i+l+3"}, 
{\ar "ia"; "i"},  
{\ar "i+l+1a"; "i+l+3a"}, 
{\ar "2n-1a"; "2n+1a"},
{\ar "i+l+1a"; "i+la"}, 
{\ar "i+lb"; "i+la"}, 
{\ar "i+l+1a"; "i+l+1"}, 
{\ar "i+l+3a"; "i+l+3"},
{\ar "2n-1a"; "2n-1"},  
{\ar "2n+1a"; "2n+1"}, 
\endxy$   \\
\hline
$i$ is even, $l>1$ is odd & 
$\xy <12pt,0pt>:
(0,0)*+0{\bullet}="i+1",
(2,0)*+0{\bullet}="i+3",
(4,0)*+0{\bullet}="i+l+1",
(4,.55)*+ {\scriptstyle i+l}, 
(6,0)*+0{\bullet}="i+l+3",
(8, 0)*+0{\bullet}="2n-1",
(10, 0)*+0{\bullet}="2n+1",   
(-1,-2)*+0{\bullet}="i",
(0,-3)*+0{\bullet}="ia", 
(3,-.1)*+0{\cdots},
(7,-.1)*+0{\cdots},
(5,-1)*+0{\bullet}="i+l+1a",
(7,-1)*+0{\bullet}="i+l+3a",
(9, -1)*+0{\bullet}="2n-1a",
(11, -1)*+0{\bullet}="2n+1a", 
(4,-3)*+0{\bullet}="i+la",
(8,-1.1)*+0{\cdots},
(0, .55)*+{\scriptstyle i+1},  
(10, .55)*+{\scriptstyle 2n+1}, 
(-1.4, -2)*+{\scriptstyle i},
(.4, -3)*+{\scriptstyle i}, 
(4.2, -1)*+{\scriptstyle i+l}, 
(5.1, -3)*+{\scriptstyle i+l-1}, 
{\ar "i+1"; "i"},
{\ar "i+1"; "i+3"}, 
{\ar "2n-1"; "2n+1"},
{\ar "i+l+1"; "i+l+3"}, 
{\ar "ia"; "i"},  
{\ar "i+l+1a"; "i+l+3a"}, 
{\ar "2n-1a"; "2n+1a"},
{\ar "i+l+1a"; "i+la"}, 
{\ar "i+l+1a"; "i+l+1"}, 
{\ar "i+l+3a"; "i+l+3"},
{\ar "2n-1a"; "2n-1"},  
{\ar "2n+1a"; "2n+1"}, 
\endxy$\\
\hline
$i$ is even, $l=1$ & $\xy <12pt,0pt>:
(0,0)*+0{\bullet}="i",
(.55, 0)*+{\scriptstyle i}, 
\endxy$\\

\hline
$i$ is odd, $l$ is even & 
$\xy <12pt,0pt>:
(0,0)*+0{\bullet}="i",
(2,0)*+0{\bullet}="i+2",
(4,0)*+0{\bullet}="i+l",
(4,.55)*+{\scriptstyle i+l}, 
(6,0)*+0{\bullet}="i+l+2",
(8, 0)*+0{\bullet}="2n-1",
(10, 0)*+0{\bullet}="2n+1",   
(-1,-2)*+0{\bullet}="i-1",
(3,-.1)*+0{\cdots},
(7,-.1)*+0{\cdots},
(5,-1)*+0{\bullet}="i+la",
(7,-1)*+0{\bullet}="i+l+2a",
(9, -1)*+0{\bullet}="2n-1a",
(11, -1)*+0{\bullet}="2n+1a", 
(4,-3)*+0{\bullet}="i+l-1a",
(8,-1.1)*+0{\cdots},
(0, .55)*+{\scriptstyle i},  
(10, .55)*+{\scriptstyle 2n+1}, 
(-.2, -2)*+{\scriptstyle i-1}, 
(4.1, -1)*+{\scriptstyle i+l}, 
(5.1, -3)*+{\scriptstyle i+l-1}, 
{\ar "i"; "i-1"},
{\ar "i"; "i+2"}, 
{\ar "2n-1"; "2n+1"},
{\ar "i+l"; "i+l+2"}, 
{\ar "i+la"; "i+l+2a"}, 
{\ar "2n-1a"; "2n+1a"},
{\ar "i+la"; "i+l-1a"}, 
{\ar "i+la"; "i+l"}, 
{\ar "i+l+2a"; "i+l+2"},
{\ar "2n-1a"; "2n-1"},  
{\ar "2n+1a"; "2n+1"}, 
\endxy$\\
\hline 

$i$ is odd, $l$ is odd & 

$\xy <12pt,0pt>:
(0,0)*+0{\bullet}="i",
(2,0)*+0{\bullet}="i+2",
(4,0)*+0{\bullet}="i+l+1",
(4,.55)*+{\scriptstyle i+l+1}, 
(6,0)*+0{\bullet}="i+l+3",
(8, 0)*+0{\bullet}="2n-1",
(10, 0)*+0{\bullet}="2n+1",   
(-1,-2)*+0{\bullet}="i-1",
(3,-.1)*+0{\cdots},
(7,-.1)*+0{\cdots},
(5,-1)*+0{\bullet}="i+l+1a",
(7,-1)*+0{\bullet}="i+l+3a",
(9, -1)*+0{\bullet}="2n-1a",
(11, -1)*+0{\bullet}="2n+1a", 
(4,-3)*+0{\bullet}="i+la",
(5, -4)*+0{\bullet}="i+lb", 
(8,-1.1)*+0{\cdots},
(0, .55)*+{\scriptstyle i},  
(10, .55)*+{\scriptstyle 2n+1}, 
(-.2, -2)*+{\scriptstyle i-1}, 
(3.8, -1)*+{\scriptstyle i+l+1}, 
(4.8, -3)*+{\scriptstyle i+l},
(5.8, -4)*+{\scriptstyle{i+l}}, 
{\ar "i"; "i-1"},
{\ar "i"; "i+2"}, 
{\ar "2n-1"; "2n+1"},
{\ar "i+l+1"; "i+l+3"}, 
{\ar "i+l+1a"; "i+l+3a"}, 
{\ar "2n-1a"; "2n+1a"},
{\ar "i+l+1a"; "i+la"}, 
{\ar "i+l+1a"; "i+l+1"}, 
{\ar "i+l+3a"; "i+l+3"},
{\ar "2n-1a"; "2n-1"},  
{\ar "2n+1a"; "2n+1"}, 
{\ar "i+lb"; "i+la"}, 
\endxy$\\
\hline 

\end{tabular}
\end{center}
In case that $n=1$,  the algebra $A$ is given by the quiver
$$\xy <16pt,0pt>:
(0,0)*+0{\bullet}="1",
(2,0)*+0{\bullet}="3",
(-1, -2)*+0{\bullet}="0", 
(1, -2)*+0{\bullet}="2", 
{\ar "1"; "0"}, 
{\ar "1"; "3"}, 
{\ar "3"; "2"}, 
\endxy$$

\noindent
The Auslander-Reiten quiver of $\stGproj{\Lambda}$ can be drawn as follows. 

\newcommand{\maa}{\xy <7.5pt,0pt>:
(0,0)*+0{\circ}="1",
(2,0)*+0{\bullet}="3",
(-1,-2)*+0{\circ}="0",
(1, -2)*+0{\bullet}="2",
{\ar@{..} "1"; "0"},
{\ar@{..} "1"; "3"},
{\ar "3"; "2"},
\endxy}

\newcommand{\mab}{\xy <7.5pt,0pt>:
(0,0)*+0{\circ}="1",
(2,0)*+0{\bullet}="3",
(-1,-2)*+0{\circ}="0",
(1, -2)*+0{\bullet}="2",
(2, -3)*+0{\bullet}="2a",
{\ar@{..} "1"; "0"},
{\ar@{..} "1"; "3"},
{\ar "3"; "2"},
{\ar "2a"; "2"},
\endxy}

\newcommand{\mac}{\xy <7.5pt,0pt>:
(0,0)*+0{\bullet}="1",
(2,0)*+0{\bullet}="3",
(-1,-2)*+0{\bullet}="0",
(1, -2)*+0{\circ}="2",
{\ar "1"; "0"},
{\ar "1"; "3"},
{\ar@{..} "3"; "2"},
\endxy}

\newcommand{\mad}{\xy <7.5pt,0pt>:
(0,0)*+0{\bullet}="1",
(2,0)*+0{\bullet}="3",
(-1,-2)*+0{\bullet}="0",
(1, -2)*+0{\circ}="2",
(0, -3)*+0{\bullet}="0a",
{\ar "1"; "0"},
{\ar "1"; "3"},
{\ar@{..} "3"; "2"},
{\ar "0a"; "0"},
\endxy}

\newcommand{\mba}{\xy <7.5pt,0pt>:
(0,0)*+0{\circ}="1",
(2,0)*+0{\circ}="3",
(-1,-2)*+0{\circ}="0",
(1, -2)*+0{\bullet}="2",  ,
{\ar@{..} "1"; "0"},
{\ar@{..} "1"; "3"},
{\ar@{..} "3"; "2"},
\endxy}

\newcommand{\mbb}{\xy <7.5pt,0pt>:
(0,0)*+0{\bullet}="1",
(2,0)*+0{\bullet}="3",
(-1,-2)*+0{\bullet}="0",
(1, -2)*+0{\circ}="2",
(2, -3)*+0{\bullet}="2a",
(3, -1)*+0{\bullet}="3a",
{\ar "1"; "0"},
{\ar "1"; "3"},
{\ar@{..} "3"; "2"},
{\ar "3a"; "3"},
{\ar@{..} "2a"; "2"},
{\ar "3a"; "2a"},
\endxy}

\newcommand{\mbc}{\xy <7.5pt,0pt>:
(0,0)*+0{\bullet}="1",
(2,0)*+0{\bullet}="3",
(-1,-2)*+0{\bullet}="0",
(1, -2)*+0{\circ}="2",
(0, -3)*+0{\bullet}="0a",
(2, -3)*+0{\bullet}="2a",
(3, -1)*+0{\bullet}="3a",
{\ar "1"; "0"},
{\ar "1"; "3"},
{\ar@{..} "3"; "2"},
{\ar "3a"; "3"},
{\ar@{..} "2a"; "2"},
{\ar "0a"; "0"},
{\ar "3a"; "2a"},
\endxy}

\newcommand{\mca}{\xy <7.5pt,0pt>:
(0,0)*+0{\bullet}="1",
(2,0)*+0{\bullet}="3",
(-1,-2)*+0{\bullet}="0",
(1, -2)*+0{\circ}="2",
(2, -3)*+0{\bullet}="2a",
(3, -4)*+0{\bullet}="2b",
(3, -1)*+0{\bullet}="3a",
{\ar "1"; "0"},
{\ar "1"; "3"},
{\ar@{..} "3"; "2"},
{\ar "3a"; "3"},
{\ar@{..} "2a"; "2"},
{\ar "2b"; "2a"},
{\ar "3a"; "2a"},
\endxy}

\newcommand{\mcb}{\xy <7.5pt,0pt>:
(0,0)*+0{\bullet}="1",
(2,0)*+0{\bullet}="3",
(-1,-2)*+0{\bullet}="0",
(1, -2)*+0{\circ}="2",
(0, -3)*+0{\bullet}="0a",
(2, -3)*+0{\bullet}="2a",
(3, -4)*+0{\bullet}="2b",
(3, -1)*+0{\bullet}="3a",
{\ar "1"; "0"},
{\ar "1"; "3"},
{\ar@{..} "3"; "2"},
{\ar "3a"; "3"},
{\ar@{..} "2a"; "2"},
{\ar "0a"; "0"},
{\ar "2b"; "2a"},
{\ar "3a"; "2a"},
\endxy}

\newcommand{\mda}{\xy <7.5pt,0pt>:
(0,0)*+0{\circ}="1",
(2,0)*+0{\circ}="3",
(-1,-2)*+0{\bullet}="0",
(1, -2)*+0{\circ}="2",
{\ar@{..} "1"; "0"},
{\ar@{..} "1"; "3"},
{\ar@{..} "3"; "2"},
\endxy}

$$\xy<48pt, 0pt>:
(0, 0)*+{\maa}*\frm{.}="maa",
(1, 1)*+{\mab}*\frm{.}="mab",
(2,2)*+{\mac}*\frm{.}="mac",
(3,3)*+{\mad}*\frm{.}="mad",
(2,0)*+{\mba}*\frm{.}="mba",
(3,1)*+{\mbb}*\frm{.}="mbb",
(4,2)*+{\mbc}*\frm{.}="mbc",
(5,3)*+{\maa}*\frm{.}="maa1",
(4, 0)*+{\mca}*\frm{.}="mca",
(5, 1)*+{\mcb}*\frm{.}="mcb",
(6,2)*+{\mab}*\frm{.}="mab1",
(6,0)*+{\mda}*\frm{.}="mda",
(7, 1)*+{\mac}*\frm{.}="mac1",
(8, 0)*+{\mad}*\frm{.}="mad1",
{\ar "maa"; "mab"},
{\ar "mab"; "mac"},
{\ar "mac"; "mad"},
{\ar "mba"; "mbb"},
{\ar "mab"; "mba"},
{\ar "mac"; "mbb"},
{\ar "mbb"; "mbc"},
{\ar "mad"; "mbc"},
{\ar "mbc"; "maa1"},
{\ar "mbb"; "mca"},
{\ar "mca"; "mcb"},
{\ar "mcb"; "mab1"},
{\ar "mda"; "mac1"},
{\ar "mbc"; "mcb"},
{\ar "mcb"; "mda"},
{\ar "maa1"; "mab1"},
{\ar "mab1"; "mac1"},
{\ar "mac1"; "mad1"},
@={(-0.8,0), (3.1, 3.9), (3.9, 3.1), (0, -0.8)}, 
s0="prev" @@{; "prev"; **@{--}="prev"} , 
@={(8.8,0), (4.9, 3.9), (4.1,3.1), (8, -0.8)}, 
s0="prev" @@{; "prev"; **@{--}="prev"} , 
\endxy$$
The modules in the two dashed frames are identified correspondingly.

\section{Concluding remarks}\label{section-remarks}
Our results can be applied abelian categories with enough injective objects (e.g. Grothendieck categories). One just need to consider their opposite categories, which are abelian categories with enough projective objects.  Our results can also be used to give shorter proofs of some known results on homological conjectures. 

\medskip
In the following, we assume that $A$ and $B$ are derived equivalent left coherent rings, and  $F:\Db{\modcat{A}}\ra\Db{\modcat{B}}$ is a derived equivalence. Without loss of generality, we can assume that $F$ is non-negative and the tilting complex associated to $F$ has terms only in degrees $0, \cdots, n$. Let $G$ be a quasi-inverse of $F$. Then $G[-n]$ is also non-negative.

\medskip
\noindent
{\em Finitistic dimension.} The finitistic dimension of a left coherent ring  is the supremum of projective dimensions of finitely presented modules with finite projective dimensions. The finiteness of finitistic dimension is proved to be preserved under derived equivalences in \cite{Pan2009a}. With the stable functor, the proof will be very easy.   We claim that $|\findim(A)-\findim(B)|\leq n$, where $\findim$ stands for the finitistic dimension.  To prove this, it is sufficient to prove that,  for each $A$-module $X$, there are inequalities between the projective dimensions of $X$ and $\bar{F}(X)$: 
$$\projdim {}_B\bar{F}(X)\leq \projdim {}_AX\leq \projdim {}_B\bar{F}(X)+n.$$ 
We first prove the first inequality. Suppose that $\projdim{}_AX=m$. Then $\Omega_A^m(X)\simeq 0$ in $\stmodcat{A}$, and consequently $\Omega_B^m\circ \bar{F}(X)\simeq \bar{F}\circ \Omega_A^m(X)\simeq 0$ in $\stmodcat{B}$, where the first isomorphism follows from Corollary \ref{corollary-comm-with-omega}.  Hence
 $$\projdim {}_B\bar{F}(X)\leq m=\projdim{}_AX.$$ 
The proof of the second inequality goes as follows.  Suppose that $\projdim{}_B\bar{F}(X)=m$. Then we have the following isomorphisms in $\stmodcat{A}$: 
$$\Omega_A^{m+n}(X)\simeq \overline{[-n-m]}(X)\simeq \overline{G[-n]\circ [-m]\circ  F}(X)\simeq \overline{G[-n]}\circ\Omega_B^m\circ\bar{F}(X)\simeq 0, $$
where the third isomorphism follows from  Theorem \ref{theorem-stFunctor-composite}. This implies that $$\projdim {}_AX\leq m+n=\projdim{}_B\bar{F}(X)+n.$$

\noindent
{\em Syzygy finiteness.}
A left coherent ring $\Lambda$ is called   $\Omega^m$-finite provided that $\add(\Omega_{\Lambda}^m(\modcat{\Lambda}))$ contains only finitely many isomorphism classes of  indecomposable  $\Lambda$-modules, and is called {\em syzygy-finite} if $A$ is $\Omega^m$-finite for some $m$. Clearly,  a syzygy-finite algebra always has finite finitistic dimension. With the help of the stable functor, we can prove that: 

\smallskip
{\em If $A$ is $\Omega^m$-finite, then $B$ is $\Omega^{m+n}$-finite. In particular $A$ is syzygy-finite if and only if so is $B$.}

\smallskip
\noindent The proof of the above statement is almost trivial. Let $X$ be a $B$-module. By assumption, there is an $A$-module $M$ such that $\Omega_A^m\bar{F}(X)\in \add(M)$. Applying the stable functor of $G[-n]$, we see that $\Omega_B^{m+n}(X)$, which is isomorphic to $\overline{G\circ[-n]}\circ \Omega_A^m\circ \bar{F}(X)$ in $\stmodcat{B}$, is in $\add(B\oplus \overline{G[-n]}(M))$, showing that $B$ is $\Omega^{m+n}$-finite.  

\medskip
\noindent 
{\em Generalized Auslander-Reiten conjecture.} This conjecture says that a module $X$ over an Artin algebra $\Lambda$ satisfying $\Ext_{\Lambda}^i(X, X\oplus\Lambda)=0$ for all $i>m\geq 0$ has projective dimension $\leq m$. Via the stable functor, the second author proved in \cite{Pan2013c} that $A$ satisfies the generalized Auslander-Reiten conjecture if and only so does $B$. This was also proved by Wei \cite{Wei2012} and by Diveris and Purin \cite{Diveris2012} independently.

\bigskip
\noindent{\bf Acknowledgements.}
The research work of both authors are partially supported by NSFC and the Fundamental Research Funds for the Central Universities. W. Hu thanks BNSF(1132005, KZ201410028033) for partial support, and  S.Y. Pan is also partially supported by  the Scientific Research Foundation for the Returned Overseas Chinese Scholars, State Education Ministry.

%
%
%

\bibliographystyle{aomalpha}
\bibliography{../refData}

\bigskip
Wei Hu

\medskip
School of Mathematical Sciences, Laboratory of Mathematics and Complex Systems, MOE, Beijing Normal
University, 100875 Beijing, China

\smallskip
Beijing Center for Mathematics and Information Interdisciplinary Sciences, 100048 Beijing, China

\medskip
{\tt Email: huwei@bnu.edu.cn}

\bigskip
Shengyong Pan

\medskip
Department of Mathematics, Beijing Jiaotong
University, 100044 Beijing, China

\smallskip
Beijing Center for Mathematics and Information Interdisciplinary Sciences, 100048 Beijing, China

\medskip
{\tt Email: shypan@bjtu.edu.cn}
\end{document}